\documentclass[12pt,fleqn]{article}

\usepackage{amsmath}
\usepackage{amsthm}
\usepackage{amssymb}
\usepackage{amsfonts}
\usepackage{epsf}
\usepackage{graphicx}
\usepackage{hyperref}
\usepackage{color}
\usepackage{bm}
\usepackage{booktabs}
\usepackage{mathrsfs}

\textwidth 6.5in \textheight 9.2in \oddsidemargin 0.00in
\evensidemargin 0.00in \topmargin -0.5in \marginparwidth 0.00in
\marginparsep 0.00in \linespread{1.1}

\newtheorem{theorem}{Theorem}

\newtheorem{proposition}{Proposition}
\theoremstyle{remark}

\newtheorem{remark}{Remark}
\theoremstyle{definition}

\newtheorem{definition}{Definition}
\DeclareMathOperator*\Res{{Res}}
\DeclareMathOperator\re{{Re}}
\DeclareMathOperator\im{{Im}} \numberwithin{equation}{section}
\DeclareMathOperator\Ai{{Ai}}
\DeclareMathOperator\Bi{{Bi}}
\DeclareMathOperator\cL{{cL}}
\DeclareMathOperator\dL{{dL}}

\newcommand{\D}{\displaystyle}

\numberwithin{equation}{section}

\newcounter{comment}
\setcounter{comment}{1}

\begin{document}

\title{Uniform asymptotics for the discrete Laguerre polynomials}
\date{\today}
\author{Dan Dai$^{\ast}$ and Luming Yao$^{\dag}$}

\maketitle
\begin{abstract}
    In this paper, we consider the discrete Laguerre polynomials $P_{n, N}(z)$ orthogonal with respect to the weight function $w(x) = x^{\alpha} e^{-N cx}$ supported  on the infinite nodes $L_N = \{ x_{k,N} = \frac{k^2}{N^2}, k \in \mathbb{N} \}$. We focus on the ``band-saturated region" situation when the parameter $c > \frac{\pi^2}{4}$.  As $n \to \infty$, uniform expansions for $P_{n, n}(z)$ are achieved for $z$ in different regions in the complex plane. Typically, the Airy-function expansions and Gamma-function expansions are derived for $z$ near the endpoints of the band and the origin, respectively.  The asymptotics for the normalizing coefficient $h_{n, N}$, recurrence coefficients $\mathscr{B}_{n, N}$ and $\mathscr{A}_{n, N}^2$, are also obtained. Our method is based on the Deift-Zhou steepest descent method for Riemann-Hilbert problems.
\end{abstract}

\vspace{6cm}

\noindent \textit{2010 Mathematics Subject Classification}: 41A60, 33C45. \\
\noindent \textit{Keywords and phrases}: Discrete Laguerre polynomials; Riemann-Hilbert analysis; Uniform asymptotics.

%%%%%%%%%%%%%%%%%%%%%%%%%%%%%%%%%%%%%%%%%%%%%%%%%%%%%%%%%%%%%%%%%%

\vspace{5mm}

\hrule width 65mm

\vspace{2mm}

\begin{description}

\item \hspace*{5mm}$\ast$ Department of Mathematics, City University of
Hong Kong, Hong Kong. \\
Email: \texttt{dandai@cityu.edu.hk}

\item \hspace*{5mm}$\dag$ Department of Mathematics, City University of
Hong Kong, Hong Kong. \\
Email: \texttt{lumingyao2-c@my.cityu.edu.hk}

\end{description}

\section{Introduction}
\label{sec:dl:intro}

The well-known  Laguerre polynomials $L_n^{(\alpha)}(x)$ are given by
\begin{equation}
L^{(\alpha)}_{n}\left(x\right)=\frac{{\left(\alpha+1\right)_{
n}}}{n!}{{}_{1}F_{1}}\left({-n\atop\alpha+1};x\right) , \qquad \alpha > -1,
\end{equation}
where ${}_1F_1$ is the generalized hypergeometric function. They are orthogonal with respect  to the following weight function
\begin{equation}
w(x) = x^\alpha e^{-x}, \qquad x \in (0, \infty).
\end{equation}
In this paper, we will study their discrete analogues. It is well-known that both continuous and discrete orthogonal polynomials play a significant role in various fields of mathematical physics, such as random matrix theory, random tiling, quantum mechanics, etc. For example, one may refer to \cite{baik2007,Ble:Lie2014,Joh2001,Joh2002,karl2012,Lie:Wang2017} for  applications of discrete orthogonal polynomials.

In the study of orthogonal polynomials, one of central problems is to investigate their asymptotic properties when the polynomial degree becomes large. For discrete orthogonal polynomials, a breakthrough is made by Baik et al. \cite{baik2007}, who successfully applied the Riemann-Hilbert (RH) method to derive uniform asymptotics for a general class of weight functions. Their method is also an important development of the original Deift-Zhou method to solve asymptotic problems for the continuous orthogonal polynomials, integrable partial differential equations and random matrix theory; see \cite{deift1999-1,deift1999-2,deift1993}. Since the work of Baik et al. \cite{baik2007}, there has been a lot of work in the study of asymptotics of discrete orthogonal polynomials. For example, Wong and his co-workers derived global asymptotics of various polynomials, with finite lattice \cite{dai2007,lin2013} and infinite lattice \cite{ou2010,Wang:Wong2011}. In the treatment of the so-called ``band-saturated region" endpoints, Bleher and Liechty \cite{bleher2011,Ble:Lie2014} made a major modification to the method in \cite{baik2007}. Wu et al. \cite{Wu:Lin:Xu:Zhao} studied the case where the orthogonality lattice consists of infinite nodes with an accumulation point.  Except \cite{Wu:Lin:Xu:Zhao}, the orthogonality lattices are always equally spaced. In the present paper, we will consider another example where the lattice is not uniformly distributed. By using the Deift-Zhou steepest descent method for RH problems, we  derive the uniform asymptotics for discrete Laguerre polynomials.

%consider the ``band-saturated region" situation and {\it later}

%Discrete orthogonal polynomials are polynomials orthogonal with respect to a discrete set, either finite discrete node sets or infinite discrete node sets. A considerable amount of literature has been published about discrete orthogonal polynomials. Especially for the asymptotic behaviour of orthogonal polynomials and the zero distribution. For instance, in \cite{pan2012}  and \cite{lin2013}, the authors studied the asymptotics of the discrete Chebyshev polynomials. In \cite{ou2010}, Ou and Wong applied the Riemann-Hilbert analysis to study the global asymptotics of Charlier polynomials $C_n^{(a)} (z)$ and also presented asymptotic formulas for the large and small zeros of $C_n^{(a)} (z)$ as $n \to \infty$. In \cite{huang2020}, Huang, Lin and Zhao gave uniform and non-uniform asymptotics of the Charlier polynomials by using difference equation methods. In \cite{dominici2016} and \cite{dominici2018}, Donimici obtained Mehler-Heine type formulas for Charlier and Meixner polynomials and their related families. In \cite{jin1998}, Jin and Wong studied infinite asymptotic expansions for Meixner polynomials $m_n(n a; b, c)$ as $n \to \infty$. In \cite{dai2007}, Dai and Wong considered the scaled Krawtchouk polynomials $K_n^N (x)$ and give asymptotic estimates as the ratio $n/N$ tends to a limit $c \in (0,1)$ when $n \to \infty$.

The weight function for the discrete Laguerre polynomials is given by
\begin{equation}\label{weight}
    w_N(x) = x^{\alpha} e^{-N cx}, \qquad \qquad \alpha > -1, N \in \mathbb{N} \textrm{ and } c > 0,
\end{equation}
which is supported on the discrete infinite lattice
\begin{equation} \label{ortho-lattice}
    L_N := \Big\{ x_{k,N} = \frac{k^2}{N^2}, k \in \mathbb{N} \Big\} .
\end{equation}
That is, let $P_{n, N}(x)$ be the monic discrete Laguerre polynomials
\begin{equation}\label{Pn,n}
    P_{n, N}(x) = x^n + p_{n, n-1} x^{n-1} + \dots + p_{n, 0}, \quad n = 0, 1, \dots,
\end{equation}
they satisfy the following orthogonality condition
\begin{equation}\label{orthogonality}
    \sum_{x \in L_{N}} P_{m, N} (x) P_{n, N} (x) w_N(x) = h_{n, N} \delta_{mn},
\end{equation}
with $h_{n, N}$ being the normalizing constants and $\delta_{mn} = \begin{cases}
    1, & m = n\\
    0, & m \neq n
\end{cases}$.
The polynomials also satisfy a three-term recurrence relation as follows:
\begin{equation}\label{recurrence}
    x P_{n, N}(x) = P_{n+1, N}(x) + \mathscr{B}_{n, N} P_{n, N} (x) + \mathscr{A}_{n, N}^2 P_{n-1, N}(x).
\end{equation}

Comparing with continuous cases, there is a significant difference in the zero distribution for the discrete orthogonal polynomials:  their zeros are confined by the orthogonality nodes $x_{k,N}$ in $L_N$. More precisely, we have the following proposition.

\begin{proposition}\label{zerosdistr}
    The polynomial $P_{n, N}(x)$ has $n$ real simple zeros and no more than one zero lies in the closed interval $[x_{k,N}, x_{k+1, N}]$ between any two consecutive nodes.
\end{proposition}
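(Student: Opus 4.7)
My plan is to apply the orthogonality relation \eqref{orthogonality} twice: once via the classical sign-change argument to obtain $n$ real simple zeros, and a second time with a quadratic auxiliary polynomial that exploits the geometric feature distinguishing $L_N$ from a continuous interval, namely that $L_N$ contains no node strictly between two consecutive $x_{k,N}$'s. To prove reality and simplicity, let $\eta_1<\cdots<\eta_k$ denote the distinct real zeros of $P_{n,N}$ of odd multiplicity and set $R(x):=\prod_{j=1}^{k}(x-\eta_j)$, a monic polynomial of degree $k\leq n$. Then $P_{n,N}(x)R(x)$ has only even-multiplicity real zeros and leading coefficient $+1$, so it is non-negative on $\mathbb{R}$, and consequently
\[
\sum_{x\in L_N}P_{n,N}(x)R(x)w_N(x)>0,
\]
since $w_N>0$ on $L_N$ and the non-negative integrand has only finitely many zeros while $L_N$ is infinite. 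If $k<n$ this contradicts \eqref{orthogonality}; hence $k=n$, and since $\deg P_{n,N}=n$ every zero must be simple.

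For the at-most-one-zero-per-interval claim, I argue by contradiction: suppose two (necessarily real, by the previous step) zeros $z_1\leq z_2$ of $P_{n,N}$ both lie in $[x_{k,N},x_{k+1,N}]$ for some $k$. The key observation is that $L_N\cap(x_{k,N},x_{k+1,N})=\emptyset$, so every $x\in L_N$ lies outside the open interval $(z_1,z_2)\subseteq[x_{k,N},x_{k+1,N}]$; consequently $(x-z_1)(x-z_2)\geq 0$ on the entire lattice, with equality only at the at most two nodes in $\{z_1,z_2\}\cap L_N$. Setting $\tilde P(x):=P_{n,N}(x)/[(x-z_1)(x-z_2)]$, a polynomial of degree $n-2$, I obtain
\[
P_{n,N}(x)\tilde P(x)=(x-z_1)(x-z_2)\,\tilde P(x)^2\geq 0,\qquad x\in L_N,
\]
with strict positivity outside a finite set of nodes. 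Summing against $w_N$ yields a strictly positive value, contradicting \eqref{orthogonality} applied to $\tilde P$.

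I do not anticipate any real obstacle: absolute convergence of the sums follows from the exponential decay of $w_N(x)=x^\alpha e^{-Ncx}$ against the polynomial growth of the integrands, and the strict positivity asserted above follows because the non-negative integrands vanish on only finitely many of the infinitely many nodes. The conceptual heart of the argument is the sparsity fact $L_N\cap(x_{k,N},x_{k+1,N})=\emptyset$, which is what makes the quadratic $(x-z_1)(x-z_2)$ non-negative on the entire support of orthogonality and thereby unlocks the contradiction; without this feature the proposition would fail, as it does in the continuous case.
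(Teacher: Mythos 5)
Your proof is correct. The paper itself does not supply an argument here—it defers entirely to the references \cite{baik2007,Kui:Rak1998}—and your proposal reconstructs precisely the standard argument those sources use: a sign-change/auxiliary-polynomial argument via $R(x)=\prod_j(x-\eta_j)$ for reality and simplicity, and then the quadratic auxiliary polynomial $(x-z_1)(x-z_2)$ combined with the fact that $L_N\cap(x_{k,N},x_{k+1,N})=\emptyset$ for the at-most-one-zero-per-gap claim. Both halves are airtight, including the absolute convergence of the sums and the strict positivity from the lattice being infinite while the integrand vanishes at only finitely many nodes.
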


\begin{proof}
    This property is an easy consequence of the orthogonality condition \eqref{orthogonality}; see \cite{baik2007,Kui:Rak1998} for detailed proofs.
\end{proof}

The above Proposition implies that there is an upper constraint for the limiting zero distribution of discrete orthogonal polynomials, which depends on the distribution of the orthogonality nodes $x_{k,N}$. It is a well-known fact that the limiting zero distribution of orthogonal polynomials is related to the following constrained equilibrium problem (cf.  \cite{dragnev1997}): given a function $V(x)$ and a Lebesgue measure $\sigma$, find a probability measure $\mu_0 \in \mathcal{M}$ on $\mathbb{R}$ with
\begin{equation} \label{set-M}
\mathcal{M} = \{ 0 \leq \mu \leq \sigma, \ \mu(\mathbb{R}) = 1 \},
\end{equation}
such that $\mu_0$ minimizes the energy functional
\begin{equation} \label{em-def}
  E(\mu_0) =  \inf_{\mu \in \mathcal{M}} E(\mu) = \inf_{\mu \in \mathcal{M}} \left\{ \iint \log \frac{1}{|x-y|} d\mu(x)  d\mu(y) + \int V(x) d\mu(x) \right\}.
\end{equation}
In the above formula, $V(x)$ is also called an external field and  the minimizer $\mu_0$ is the corresponding \emph{equilibrium measure}. For the polynomials with the weight function \eqref{weight}, we have $V(x) = cx$ and $\mu_0$ is the limiting zero distribution of $P_{n, n}(z)$ as $n \to \infty$. Note that, in the case for continuous orthogonal polynomials, the energy functional is minimized among all probability measure on $\mathbb{R}$, where the upper constraint $\sigma$ in \eqref{set-M} does not exist. For the discrete orthogonal polynomials, due to the Proposition \ref{zerosdistr}, the upper constraint $\sigma$ occurs, which describes the distribution of the orthogonality nodes.

Depending on properties of the equilibrium measure $\mu_0$, the real line can be divided into subintervals of the following three different types; see \cite{baik2007}.
\begin{definition}\label{void}
    A \emph{void} is an open interval with $ \mu_0 = 0$, i.e., the equilibrium measure attains the lower constraint $0$.
\end{definition}
\begin{definition}\label{band}
    A \emph{band} is an open interval with $0 < \mu_0 < \sigma$, i.e., the equilibrium measure does not achieve the lower or the upper constraint.
\end{definition}
\begin{definition}\label{saturated}
    A \emph{saturated region} is an open interval with $\mu_0 = \sigma$, that is,  the equilibrium measure achieves the upper constraint.
\end{definition}

For the weight function in \eqref{weight}, there exists a critical value $c_{cr}= \frac{\pi^2}{4}$: when the parameter $c$ in \eqref{weight} is larger than $c_{cr}$,  the upper constraint in \eqref{set-M} is achieved on the left part of the support of the equilibrium measure $\mu_0$. That is, we have a ``saturated region-band-void" case when $c>c_{cr}$. When $0<c<c_{cr}$, the upper constraint does not have an influence. The equilibrium problem is similar to that of continuous cases. In the present paper, we will focus on the ``saturated region-band-void" case when  $c>c_{cr}$ and study asymptotics of the corresponding orthogonal polynomials. In the literature, such kind of problems have been studied by many researchers; see, for example, \cite{baik2007,Ble:Lie2014,dai2007,ou2010}. In our case, the origin needs some careful treatments because it is the hard edge of the equilibrium measure $\mu_0$. When $c>c_{cr}$, a local parametrix in terms of the Gamma function will be constructed near the origin in our subsequent RH analysis; see the similar constructions in \cite{lin2013,Wang:Wong2011}.  While for the case $c<c_{cr}$, since the upper constraint is not active, we expect the local parametrix near the origin is constructed in terms of the Bessel functions, which is the same as the continuous case. However, we are unable to transform the local parametrix to the standard Bessel parametrix. Moreover, when computing the zeros of the discrete Laguerre polynomials and classical Laguerre polynomials numerically, we notice that the locations of the zeros are almost the same in the interval bounded away from the origin, while there are some significant differences near the origin; see the discussion in Section \ref{sec:dl:discussion}. This is an unexpected and interesting observation. We intend to investigate this case, as well as the case when $c$ is close to $c_{cr}$, in our future study.

% We obtain asymptotics of the normalizing constant $h_{n, N}$ and recurrence coefficients $\mathscr{A}_{n, N}^2$ and $\mathscr{B}_{n, N}$ as $n, N \to \infty$ and $\frac{n}{N} = 1$. Asymptotics of the polynomials $P_{n, N}(z)$ are also derived for $z$ in different regions in the complex plane.

The rest of the paper is organized as follows. In Section \ref{sec:dl:em}, we first give the equilibrium measure $\mu_0$ for the discrete Laguerre polynomials, and the corresponding $g$-function. Next, we state our main results in Section \ref{sec:dl:mr}. In Section \ref{sec:dl:rhp}, an interpolation problem is constructed for the discrete Laguerre polynomials in the first place. Next, we remove the poles on the positive real line and  convert it into a continuous RH problem. Then, the Deift-Zhou nonlinear steepest descent method for RH problems is applied to analyze the RH problem asymptotically, where the local parametrices are constructed in terms of Gamma functions and Airy functions. In Section \ref{sec:dl:proof}, we prove our main results and obtain the asymptotics  for the orthogonal polynomials and the related constants. Finally, some discussions about the discrete and continuous Laguerre polynomials, when the upper constraint is not active, are made in Section \ref{sec:dl:discussion}.

For convenience, we choose $n=N$ in the rest of the paper.
And unless specified, we take the principle branch of $z^c$.

%With this upper constraint in mind, the discrete orthogonal polynomials are related to a constraint equilibrium problem \cite{dragnev1997}, we consider the measure $\rho (x) dx$, which minimizes the energy functional
%\begin{equation}
%    E(\rho) = \int \int \log \frac{1}{|x-y|} \rho (x) dx \rho (y) dy + \int cx \rho (x) dx,
%\end{equation}
%where $\rho (x)$ is the density function satisfies the upper constraint and lower constraint
%\begin{equation}
%    0 \le \rho (x) \le \frac{1}{2\sqrt{x}}
%\end{equation}
%and the normalization condition
%\begin{equation}\label{rho=1}
%    \int_0^{b} \rho (x) dx = 1.
%\end{equation}
%This measure is called the \emph{equilibrium measure}.

\section{Equilibrium measure and the $g$-function}
\label{sec:dl:em}

Let us first consider the upper constraint measure $\sigma$ in \eqref{set-M} for the discrete Laguerre polynomials. For each $N$, let $\sigma_N$ denote the counting measure of the orthogonality  lattice $L_N$ in \eqref{ortho-lattice}, normalized by the factor $1/N$, that is,
\begin{equation}
    \sigma_{ N} ([a, b]) = \frac{\#\{k : a \le \frac{k^2}{N^2} \le b\}}{N},
\end{equation}
where $0 < a < b$. The above formula is equivalent to
\begin{equation}
    \sigma_{ N} ([a, b]) = \frac{\#\{k : N\sqrt{a} \le k \le N\sqrt{b}\}}{N}.
\end{equation}
As $N \to \infty$, we have
\begin{equation}
    \lim_{N \to \infty} \sigma_{N} ([a, b]) = \sqrt{b} - \sqrt{a}= \int_a^b \frac{1}{2\sqrt{x}} dx.
\end{equation}
It then follows from the above formula that the density of the constraint measure $\sigma$ is given by
\begin{equation} \label{upper-cons}
    \sigma' (x) = \frac{1}{2\sqrt{x}}, \qquad x \in (0, +\infty).
\end{equation}
In the literature, people usually consider polynomials orthogonal on a uniform lattice. As a consequence, the upper constraint is a measure with constant density; for example, see \cite{baik2007}. In \cite{van2020}, Van Assche and  Van Baelen consider polynomials orthogonal on a $q$-lattice $\{q^k, k = 0, 1, 2, 3, \ldots \}$, with $0 < q < 1$, where the density of the upper constraint is given by $-\frac{1}{x \, \log q}$, $x \in (0,1]$. Here, $L_N$ in \eqref{ortho-lattice} gives us another example of non-uniform lattice.

Next, let us consider the equilibrium measure $\mu_0$ in \eqref{em-def} associated with the weight function \eqref{weight}. In this case, the external field in \eqref{em-def} is simply $V(x) = cx$, $c>0$. We have the following properties for the equilibrium measure $\mu_0$, where the proof will be deferred to Section \ref{sec:dl:rhp:1st}.

\begin{proposition} \label{prop:EM}
   Let $\rho(x)$ be the density of the equilibrium measure $\mu_0$. There exists a critical value
   \begin{equation}
   c_{cr} = \frac{\pi^2}{4},
   \end{equation}
   such that
\begin{equation}\label{em:rho0}
    \rho (x) = \frac{c}{2\pi} \sqrt{\frac{4-cx}{cx}}, \quad x \in \left(0, \frac{4}{c}\right], \qquad \textrm{when } 0<c<c_{cr},
\end{equation}
and
 \begin{equation}\label{em:rho2}
                \rho (x) = \begin{cases}
                    \D\frac{1}{2\sqrt{x}}, & x \in (0, a), \vspace{2pt}\\
                    \D\frac{c}{2 \pi} \sqrt{\frac{b-x}{x-a}} + \frac{1}{2 \pi} \sqrt{\frac{b-x}{x-a}}\int_0^{a} \sqrt{\frac{a-s}{s(b-s)}}\frac{ds}{s-x}, & x \in (a, b),
                \end{cases}
            \end{equation}
when $ c>c_{cr}$.  Here, the endpoints  $a \equiv a(c)$, $b \equiv b(c)$ with $0<a<b$ are uniquely determined by the following relations
        \begin{eqnarray}
        && \frac{c(b-a)}{4} + \frac{1}{2} \int_0^{a} \sqrt{\frac{a-x}{x(b-x)}} dx = 1, \label{ab1} \\
            && \sqrt{a} + \frac{c(b-a)}{4} + \frac{1}{2 \pi} \int_a^{b} \sqrt{\frac{b-x}{x-a}} \int_0^{a} \sqrt{\frac{a-s}{s(b-s)}} \frac{ds}{s-x} dx = 1.       \label{ab2}
        \end{eqnarray}
\end{proposition}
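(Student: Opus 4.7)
The plan is to characterize $\mu_0$ via the Euler-Lagrange conditions for the constrained equilibrium problem (cf.\ \cite{dragnev1997}): there exists a Lagrange multiplier $\ell$ such that $2U^{\mu_0}(x)+V(x)=\ell$ on the band $\{0<\rho<\sigma'\}$, $\leq \ell$ on the saturated region $\{\rho=\sigma'\}$, and $\geq \ell$ on the void $\{\rho=0\}$, where $U^{\mu_0}(x)=\int\log\frac{1}{|x-y|}\,d\mu_0(y)$. Differentiating the equality on the band produces the singular integral equation $2\,\mathrm{p.v.}\!\int \frac{d\mu_0(y)}{x-y} = V'(x) = c$.

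For the subcritical regime $0<c<c_{cr}$, I expect an unconstrained one-cut solution. A standard one-cut ansatz on $(0,b_0)$ with hard edge at $0$ and soft edge at $b_0$ yields $\rho(x)=\frac{c}{2\pi}\sqrt{(b_0-x)/x}$, and the normalization $\int_0^{b_0}\rho\,dx=1$ forces $b_0=4/c$. I then verify the upper constraint by computing $\rho(x)/\sigma'(x)=\sqrt{c(4-cx)}/\pi$; this ratio is maximized at $x=0^+$ with value $2\sqrt{c}/\pi$, so $\rho\leq\sigma'$ throughout $(0,4/c)$ if and only if $c\leq\pi^2/4$. This pins down $c_{cr}=\pi^2/4$ and gives \eqref{em:rho0}.

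For the supercritical regime $c>c_{cr}$, the constraint activates on an interval adjacent to the hard edge, motivating the ansatz $\mu_0=\sigma|_{(0,a)}+\mu_{\mathrm{band}}$ with $\mu_{\mathrm{band}}$ supported on $(a,b)$ with density $\rho_{\mathrm{band}}$. Substituting into the derivative of the Euler-Lagrange equation yields
\begin{equation*}
2\,\mathrm{p.v.}\!\int_a^b \frac{\rho_{\mathrm{band}}(y)}{x-y}\,dy \;=\; c - \int_0^a \frac{dy}{\sqrt{y}\,(x-y)}, \qquad x\in(a,b).
\end{equation*}
I invert by Tricomi's formula on $(a,b)$, choosing the solution class with soft-edge vanishing $O(\sqrt{b-x})$ at $x=b$ (forced by adjacency to the void $(b,\infty)$) and allowing the generic $O((x-a)^{-1/2})$ behavior at the band--saturated boundary $x=a$. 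After interchanging the order of integration and using the identity $\int_a^b\sqrt{(b-x)/(x-a)}\,(x-s)^{-1}\,dx=\pi\big(\sqrt{(b-s)/(a-s)}-1\big)$ for $s\in(0,a)$, the Tricomi solution rearranges into the explicit form \eqref{em:rho2}. The endpoints $a,b$ are then pinned down by combining the total-mass normalization $\sqrt{a}+\int_a^b \rho_{\mathrm{band}}\,dx=1$ with the consistency of the undifferentiated Euler-Lagrange equation (which fixes the integration constant lost in passing from the full EL equation to its derivative); together with the above manipulations, these conditions deliver \eqref{ab1} and \eqref{ab2}.

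The main obstacle is the endpoint analysis at $x=a$: choosing the correct Tricomi class, establishing existence and uniqueness of $(a,b)$ solving the transcendental system, and verifying the remaining Euler-Lagrange inequalities on the saturated region $(0,a)$ and on the void $(b,\infty)$. These checks are most cleanly handled in the Riemann-Hilbert formulation, where they translate into prescribed boundary behavior of the $g$-function at the endpoints and at $z=\infty$, which is why the full proof is naturally deferred to Section \ref{sec:dl:rhp:1st}, where the $g$-function is constructed and its jump relations verified.
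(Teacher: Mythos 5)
Your proposal is sound in outline and reproduces the paper's result, but by a genuinely different technical route. For the subcritical regime your argument is actually more informative than the paper's: you solve the unconstrained problem on $(0,4/c)$ and then locate the critical value by computing the ratio $\rho(x)/\sigma'(x)=\sqrt{c(4-cx)}/\pi$, which is maximal at $x=0^+$ and equals $1$ precisely when $c=\pi^2/4$. The paper simply cites a reference for \eqref{em:rho0} and states $c_{cr}$ without this verification, so your explicit derivation of the threshold is a useful addition.

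For the supercritical regime, you reduce to the singular integral equation
\begin{equation*}
2\,\mathrm{p.v.}\int_a^b \frac{\rho_{\mathrm{band}}(y)}{x-y}\,dy = c - \int_0^a \frac{dy}{\sqrt{y}\,(x-y)}, \qquad x\in(a,b),
\end{equation*}
and invert via Tricomi's airfoil formula. The paper instead forms the scalar Riemann--Hilbert problem for $\omega(z)=g'(z)/G(z)$, $G(z)=\sqrt{(z-b)/(z-a)}$, and applies the Plemelj--Sokhotsky formula directly. These are equivalent: the $G(z)$ in the paper encodes exactly the endpoint class $\rho\sim(x-a)^{-1/2}(b-x)^{1/2}$ that you select in the Tricomi inversion, and both produce \eqref{em:rho2} with the same $\sqrt{(b-x)/(x-a)}$ prefactor. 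The paper's route is slightly cleaner for extracting \eqref{ab1} (from $\omega(z)\sim 1/z$) and \eqref{ab2} (from $\int_0^b\rho=1$), whereas your route derives them from the Tricomi normalization and the undifferentiated Euler--Lagrange equation, which amounts to the same thing. One point worth being explicit about in your writeup: even though you work in the $(x-a)^{-1/2}$ class, the actual density must be \emph{bounded} at $a^+$ (it approaches $\sigma'(a)=1/(2\sqrt a)$ continuously, as recorded in \eqref{rho-a}), which requires that the coefficient of the singular part vanish; this is tied up with \eqref{ab1}--\eqref{ab2} and the variational inequalities and is precisely the kind of endpoint bookkeeping you flag as the main obstacle. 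Deferring the full justification to the $g$-function analysis in Section \ref{sec:dl:rhp:1st}, as the paper also implicitly does, is the right call.
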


Clearly, when $c<c_{cr}$, the upper constraint $\sigma$ with density given in \eqref{upper-cons} does not have an influence on the equilibrium measure $\mu_0$. When $c > c_{cr}$, the upper constraint $\sigma$ takes effect near the origin. Then, we will have a saturated region on the left part of the support of $\mu_0$, namely $(0, a]$. Moreover, from the expression of $\rho(x)$ given in \eqref{em:rho2}, one can get the asymptotic behaviours of the density function $\rho(x)$ as $x$ tends to the endpoints $a$ and $b$. More precisely,  when $c > c_{cr}$, there exist positive constants $C_1, C_2>0$, such that
\begin{equation}\label{rho-a}
    \rho (x) = \frac{1}{2\sqrt{x}} - C_1 \sqrt{x-a} \Big(1+ O(x-a) \Big),  \qquad \textrm{as } x \to a^+,
\end{equation}
and
\begin{equation}\label{rho-b}
    \rho (x) = C_2 \sqrt{b-x} \Big(1+ O(x-b) \Big), \qquad \textrm{as } x \to b^-.
\end{equation}

With the density of the equilibrium measure given in \eqref{em:rho2}, let us define the $g$-function as follows:
\begin{equation}\label{g}
    g(z) = \int_0^{b} \log (z-s) \rho (s) ds, \qquad z \in \mathbb{C} \setminus (- \infty, b],
\end{equation}
where $\log(\cdot)$ takes the principal branch. The above $g$-function will play an important role in our subsequent RH analysis.

To state our main results in the coming section, let us introduce one constant and two more functions below: %{\red \textrm{$ [\mathbf{N}(z)]_{11}$: not a very good notation}}
\begin{equation}
    l = 2 \int_0^{b} \log |a - s| \rho (s) ds - c a,
    \label{l-def}
\end{equation}
\begin{align}
    \mathcal{N}_1(z) & =  \frac{D_{\infty}}{D(z)} \frac{\gamma(z) + \gamma(z)^{-1} }{2},  \quad z \in \mathbb{C} \setminus [0, b],
     \label{n11-def}\\
    \mathcal{N}_2(z) & =  D_{\infty} D(z) \frac{\gamma(z) - \gamma(z)^{-1}}{-2 i},  \quad z \in \mathbb{C} \setminus [0, b],
    \label{n12-def}
\end{align}
where
\begin{align}
    D(z) & =  \left(\frac{(\sqrt{a}+\sqrt{b}) z}{z + \sqrt{a b} + \sqrt{(z-a)(z-b)}}\right)^{\alpha - \frac{1}{2}}, \qquad z\in  \mathbb{C} \setminus [a, b],
    \label{szego}\\
    \gamma(z) & = \frac{z^{\frac{1}{2}}}{(z-a)^{\frac{1}{4}}(z-b)^{\frac{1}{4}}} , \qquad z\in  \mathbb{C} \setminus [0, b].
    \label{gamma-def}
\end{align}
Note that $D(z) \sim  D_{\infty} $ when $z \to \infty$ with
\begin{equation} \label{Dinfty-def}
  D_{\infty} =  \left( \frac{\sqrt{a}+\sqrt{b}}{2} \right)^{\alpha - \frac{1}{2}}.
\end{equation}

%The critical value of the parameter $c$ is $c_{cr} = \frac{\pi^2}{4}$, for which $\rho_t (x)$ attains the upper constraint. Thus, for $c > \frac{\pi^2}{4}$, there is a saturated region on the support of the equilibrium measure and we expect the upper constraint on the equilibrium measure density is active near the origin.

\section{Main results} \label{sec:dl:mr}

Now we are ready to state our main results. First, we have the following asymptotics for the normalization constant  $h_{n,n}$ in \eqref{orthogonality} and the recurrence coefficients $\mathscr{A}_{n,n}, \mathscr{B}_{n,n}$ in \eqref{recurrence}.

\begin{theorem}\label{coefficient}
	(Asymptotics of the normalization constant and the recurrence coefficients)
When $N=n$ and $n, N \to \infty$ with $c > c_{cr} = \D\frac{\pi^2}{4}$, we have
	\begin{eqnarray}
        h_{n, n} &=&  \frac{n \pi e^{nl}(a+b)}{4}\left(\frac{\sqrt{a}+\sqrt{b}}{2}\right)^{2 \alpha-1} \left[1+O\left( \frac{1}{n} \right) \right], \label{hn-final}\\
        \mathscr{A}_{n, n}^2  &=& \frac{(a+b)^2}{16} + O\left( \frac{1}{n} \right), \label{an-final}	\\
          \mathscr{B}_{n, n} &=& \frac{a^2+b^2}{2(a +b)} + O\left( \frac{1}{n} \right), \label{bn-final}
	\end{eqnarray}
	where $a$ and $b$ are determined by \eqref{ab1}-\eqref{ab2}, and $l$ is given by \eqref{l-def}.
\end{theorem}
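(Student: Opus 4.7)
The plan is to extract the three quantities from the large-$z$ behavior of $Y(z)$ by undoing the chain of transformations constructed in Section~\ref{sec:dl:rhp}. Writing
\begin{equation*}
    Y(z)\, z^{-n\sigma_3} = I + \frac{Y_1}{z} + \frac{Y_2}{z^2} + O(z^{-3}), \qquad z \to \infty,
\end{equation*}
the standard identities for monic orthogonal polynomials give $h_{n,n}$ proportional to $(Y_1)_{12}$, the recurrence coefficient $\mathscr{A}_{n,n}^2 = (Y_1)_{12}(Y_1)_{21}$, and $\mathscr{B}_{n,n} = (Y_2)_{12}/(Y_1)_{12}$; in the discrete setting these identities are read relative to the RH problem obtained after the pole-removal step of Section~\ref{sec:dl:rhp}, which dresses the formula for $h_{n,n}$ by a factor reflecting the lattice-node density, and this is the ultimate source of the prefactor $n$ visible in \eqref{hn-final}. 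The task thus reduces to computing $(Y_1)_{12}$, $(Y_1)_{21}$ and $(Y_2)_{12}$ up to a relative $O(1/n)$ correction.

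To do so, I would invert the chain $Y \to T \to S \to R$. Outside a fixed neighborhood of $[0,b]$ the lens opening is trivial, so $S = T$, and matching with the global parametrix gives $S = R\mathcal{N}$. Under the standard $g$-function normalization $T(z) = e^{-nl\sigma_3/2} Y(z) e^{-n(g(z)-l/2)\sigma_3}$, one has for $|z|$ large
\begin{equation*}
    Y(z)\, z^{-n\sigma_3} = e^{nl\sigma_3/2}\, R(z)\, \mathcal{N}(z)\, e^{n(g(z)-\log z)\sigma_3}\, e^{-nl\sigma_3/2}.
\end{equation*}
The small-norm analysis of Section~\ref{sec:dl:rhp} (resting on the Gamma parametrix at the hard edge $x=0$ and the Airy parametrices at $a$ and $b$) yields $R(z) = I + O(1/n)$ uniformly for $z$ large, so $R$ only contributes the $1 + O(1/n)$ correction in \eqref{hn-final}--\eqref{bn-final}.

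It then remains to expand $\mathcal{N}(z)$ and $g(z)$ at infinity. From \eqref{gamma-def} a direct computation gives $\gamma(z) = 1 + (a+b)/(4z) + O(z^{-2})$, so
\begin{equation*}
    \frac{\gamma(z)+\gamma(z)^{-1}}{2} = 1 + O(z^{-2}), \qquad \frac{\gamma(z)-\gamma(z)^{-1}}{-2i} = \frac{i(a+b)}{4z} + O(z^{-2}).
\end{equation*}
Combined with $D(z)/D_\infty = 1 + d_1/z + O(z^{-2})$ from \eqref{szego} and the moment expansion $g(z) = \log z - m_1/z - m_2/(2z^2) + O(z^{-3})$ with $m_k = \int_0^b s^k \rho(s)\,ds$ (computable from \eqref{em:rho2}), assembling the product produces $(Y_1)_{12}$, $(Y_1)_{21}$, and $(Y_2)_{12}$. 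In the product $(Y_1)_{12}(Y_1)_{21}$ the factors $D_\infty^{\pm 2}$ cancel, giving $\mathscr{A}_{n,n}^2 = (a+b)^2/16 + O(1/n)$; in the ratio $(Y_2)_{12}/(Y_1)_{12}$ the $n$-dependent contributions from $e^{n(g-\log z)\sigma_3}$ cancel between numerator and denominator, yielding $\mathscr{B}_{n,n} = (a^2+b^2)/(2(a+b)) + O(1/n)$; and $(Y_1)_{12}$ combined with the $e^{nl}$ dressing and the lattice-density prefactor produces \eqref{hn-final}.

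The main obstacle is the meticulous bookkeeping of multiplicative factors introduced in the discrete-to-continuous conversion, in particular the propagation of the lattice-node density factor $n\sigma'(x) = n/(2\sqrt{x})$ through the pole-removal step in order to produce the exact constant $n\pi(a+b)/4$ appearing in \eqref{hn-final}; the dimensionless formulas \eqref{an-final}--\eqref{bn-final}, being intrinsic to the three-term recurrence, follow more directly from the expansions above and are insensitive to the overall multiplicative dressings.
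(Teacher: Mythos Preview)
Your overall strategy matches the paper's: trace back the transformations $\mathbf{P}\to\mathbf{Y}\to\mathbf{T}\to\mathbf{S}\to\mathbf{R}$, use $\mathbf{R}(z)=I+O(1/n)$, and read off the $1/z$ and $1/z^2$ coefficients of $\mathbf{N}(z)$ and $e^{ng(z)\sigma_3}z^{-n\sigma_3}$. Two points, however, are not right.

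First, your formula $\mathscr{B}_{n,n}=(Y_2)_{12}/(Y_1)_{12}$ is incorrect; the standard identity used in the paper is \eqref{gammabeta}, namely $\mathscr{B}_{n,n}=[\mathbf{P}_2]_{12}/[\mathbf{P}_1]_{12}+[\mathbf{P}_1]_{11}$. Your claim that ``the $n$-dependent contributions from $e^{n(g-\log z)\sigma_3}$ cancel between numerator and denominator'' is false: writing $e^{ng(z)\sigma_3}z^{-n\sigma_3}=I+G_1/z+\cdots$ with $G_1$ diagonal and $[G_1]_{11}=-nm_1$, one finds $[\mathbf{P}_2]_{12}/[\mathbf{P}_1]_{12}\approx[\mathbf{N}_2]_{12}/[\mathbf{N}_1]_{12}+nm_1$, which grows linearly in $n$. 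It is precisely the additional term $[\mathbf{P}_1]_{11}\approx -nm_1+[\mathbf{N}_1]_{11}$ that cancels this and produces the bounded answer $(a^2+b^2)/(2(a+b))$. Without it your computation would not yield \eqref{bn-final}.

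Second, the prefactor $n\pi$ in \eqref{hn-final} does not arise from a ``lattice-node density factor $n\sigma'(x)=n/(2\sqrt{x})$'' propagated through the analysis. It comes in one stroke from the constant conjugating matrix $K=\mathrm{diag}(1,-in\pi)$ introduced in the pole-removal step \eqref{iptorhp}: since $\mathbf{P}=K^{-1}e^{\frac{nl}{2}\sigma_3}\mathbf{R}\,\mathbf{N}\,e^{n(g-\frac{l}{2})\sigma_3}K$ at infinity, the $(1,2)$ entry of $\mathbf{P}_1$ equals $(-in\pi)e^{nl}[\mathbf{N}_1]_{12}(1+O(1/n))$, and with $[\mathbf{N}_1]_{12}=D_\infty^{2}\cdot i(a+b)/4$ this gives exactly $\frac{n\pi e^{nl}(a+b)}{4}D_\infty^{2}$. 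There is no separate ``density dressing'' to track; the entire discrete bookkeeping is encoded in $K$.
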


Next, we state the asymptotics of the polynomials $P_{n,n}(z)$ for $z$ in different regions in the complex plane.

\begin{theorem}\label{asymptoticinv}
    (Asymptotics of $P_{n, n}(z)$ in voids)
  With the functions $g(z)$ and $ \mathcal{N}_1(z)$  given in \eqref{g} and \eqref{n11-def},  we have
    \begin{equation} \label{pn-outside}
        P_{n, n}(z) = e^{n g(z)}\left[ \mathcal{N}_1(z) + O\left( \frac{1}{n} \right) \right],
    \end{equation}
    uniformly for $z$ bounded away from the support of the equilibrium measure $\mu_0$, i.e., the interval $(0, b)$.
\end{theorem}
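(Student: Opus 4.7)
The plan is to obtain the asymptotics via the Deift--Zhou nonlinear steepest descent method for a Riemann--Hilbert (RH) problem, following the Baik--Deift--et al.\ framework for discrete orthogonal polynomials with the Bleher--Liechty modification for the saturated region. Writing $Y(z)$ for the solution of the interpolation problem whose $(1,1)$ entry is $P_{n,n}(z)$, I would first pass (via a standard residue-removal step) to a continuous RH problem whose contour is $[0,\infty)$. The aim is then to perform a sequence of explicit invertible transformations $Y \mapsto T \mapsto S \mapsto R$ such that $R$ solves a small-norm RH problem with $R(z) = I + O(1/n)$ uniformly away from the endpoints, and such that each transformation is the identity for $z$ in the void.

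The first transformation uses the $g$-function \eqref{g}: set
\begin{equation*}
T(z) = e^{-nl\sigma_3/2}\, Y(z)\, e^{-n(g(z)-l/2)\sigma_3},
\end{equation*}
which normalizes $T(z) \to I$ as $z \to \infty$ and converts the jumps on $[0,b]$ into oscillatory form on the band $(a,b)$ and constant form on the saturated region $(0,a)$. The second transformation opens lenses around the band $(a,b)$ and, following Bleher--Liechty, performs the appropriate factorization on the saturated segment $(0,a)$ so that the off-diagonal jumps on lens boundaries decay exponentially outside small neighborhoods of the endpoints $0$, $a$, $b$. Crucially, in the void $(b,\infty)$ (and more generally in $\mathbb{C}$ away from $[0,b]$) no lenses are opened, so $S(z) \equiv T(z)$ there.

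The third step is the parametrix construction: a global (outside) parametrix $N(z)$ built from Szeg\H{o} functions, whose $(1,1)$ entry is precisely $\mathcal{N}_1(z)$ of \eqref{n11-def}; local Airy parametrices at the soft edges $a$ and $b$; and a Gamma-function local parametrix at the hard edge $0$, as indicated in the introduction. Matching these on the boundaries of the local disks yields a jump matrix for $R := S N^{-1}$ that is uniformly $I + O(1/n)$ on the whole contour (exponentially small on the lens boundaries, algebraically $O(1/n)$ on the disk boundaries by the standard Airy/Gamma matchings). The small-norm theory for RH problems then gives $R(z) = I + O(1/n)$ uniformly for $z$ in any closed set disjoint from the endpoints, and in particular for $z$ bounded away from $[0,b]$.

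Finally, for $z$ in the void the transformation chain simply unwinds: $S(z) = T(z)$, $S(z) = R(z) N(z)$, and
\begin{equation*}
P_{n,n}(z) = Y_{11}(z) = e^{n g(z)}\, T_{11}(z) = e^{n g(z)}\bigl[R(z) N(z)\bigr]_{11} = e^{n g(z)}\bigl[\mathcal{N}_1(z) + O(1/n)\bigr],
\end{equation*}
which is exactly \eqref{pn-outside}. The main obstacle I anticipate is not the final unwinding (which is routine once the machinery is set up) but justifying the local analysis at the hard edge $0$: because $0$ is simultaneously the hard edge of the equilibrium measure, the left endpoint of the saturated region, and the accumulation point of the non-uniform lattice $L_N$, the Gamma-function parametrix must be matched against $N(z)$ with the correct $\alpha$-dependent and constraint-induced factors, and one must verify that the matching error on the boundary of the disk around $0$ is $O(1/n)$ rather than some weaker estimate. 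Once this matching is in place the small-norm conclusion, and hence the theorem, follow by the standard argument.
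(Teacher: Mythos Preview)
Your proposal is correct and follows essentially the same steepest-descent route as the paper: unwind $\mathbf{P}\to\mathbf{Y}\to\mathbf{T}\to\mathbf{S}\to\mathbf{R}$, use $\mathbf{R}=I+O(1/n)$, and read off the $(1,1)$-entry via $[\mathbf{N}(z)]_{11}=\mathcal{N}_1(z)$. One small point you gloss over: the residue-removal step is not literally the identity in the strip near the real void $(b,\infty)$---the paper's upper-triangular factors $D_\pm^u(z)$ appear there---but since $D_\pm^u$ has first column $(1,0)^T$ the $(1,1)$-entry is unaffected, which is exactly the observation the paper uses to finish.
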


\begin{theorem}\label{asymptoticinb}
    (Asymptotics of $P_{n, n}(x)$ in band)
    With $l$ and $D_{\infty}$ given in \eqref{l-def} and \eqref{Dinfty-def}, we have
    \begin{equation}\label{PninB-final}
        \begin{split}
            P_{n, n}(x) = & \frac{D_{\infty}e^{\frac{n}{2}(cx + l)}\sqrt{(a+b)x-a b}}{x^{\frac{\alpha}{2}}(x-a)^{\frac{1}{4}}(b-x)^{\frac{1}{4}}} \left[\cos \left((\alpha - \frac{1}{2})\arccos{\frac{x + \sqrt{a b}}{(\sqrt{a} + \sqrt{b})\sqrt{x}}}\right.\right. \\
            &\left.\left. + \arccos{\frac{x}{\sqrt{(a+b)x - a b}}} + n \pi \int_x^{b} \rho (s) ds - \frac{\pi i}{4} \right) + O\left( \frac{1}{n} \right)\right],
        \end{split}
    \end{equation}
    uniformly for $x$ in a compact subset  of the band $(a, b)$.
\end{theorem}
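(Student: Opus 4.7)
My plan is to trace backward through the Deift--Zhou transformations $Y\to X\to T\to S$ set up in Section~\ref{sec:dl:rhp}. Since a compact subset of $(a,b)$ stays uniformly bounded away from the endpoints $0$, $a$, $b$, the local parametrices do not enter at leading order, and the standard small-norm conclusion $R:=SN^{-1}=I+O(1/n)$ allows me to replace $S$ by the outer parametrix $N$ (built from $D(z)$ and $\gamma(z)$) with a uniform $O(1/n)$ error on this set.

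\textbf{Step 1 (unwind transformations).} For $x$ in the band approached from above, $Y_{11,+}(x)=e^{ng_+(x)}T_{11,+}(x)$, and the lens-opening factorization on the band expresses
\[
T_{11,+}(x)=S_{11,+}(x)+w_{\rm eff}(x)^{-1}\,e^{-2in\pi\int_x^b\rho(s)\,ds}\,S_{12,+}(x),
\]
where $w_{\rm eff}$ is the effective weight appearing in the band jump of $T$, matched to the Szegő identity $D_+(x)D_-(x)=w_{\rm eff}(x)$. Using the Euler--Lagrange relations $g_+(x)+g_-(x)=cx+\ell$ and $g_+(x)-g_-(x)=2\pi i\int_x^b\rho\,ds$ on the band, rewrite $e^{ng_+(x)}=e^{n(cx+\ell)/2}e^{in\pi\int_x^b\rho\,ds}$ and use the reality of $P_{n,n}$ to keep only real parts.

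\textbf{Step 2 (small-norm replacement).} Replace $S_{jk,+}$ by $N_{jk,+}$ at cost $O(1/n)$, identifying the $(1,1)$ and $(1,2)$ entries of $N$ with $\mathcal{N}_1$ and $\mathcal{N}_2$. This yields
\[
P_{n,n}(x)=e^{n(cx+\ell)/2}\operatorname{Re}\!\left[e^{in\pi\int_x^b\rho\,ds}\,\mathcal{N}_1(x)_++e^{-in\pi\int_x^b\rho\,ds}\,w_{\rm eff}(x)^{-1}\mathcal{N}_2(x)_+\right]+O(1/n).
\]

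\textbf{Step 3 (closed-form boundary values on the band).} Evaluate $D(x)_+$ and $\gamma(x)_+$ explicitly. The algebraic identity
\[
\bigl|x+\sqrt{ab}+i\sqrt{(x-a)(b-x)}\bigr|^2=(\sqrt a+\sqrt b)^2\,x,\qquad x\in(a,b),
\]
renders the ratio inside \eqref{szego} unimodular, so $D(x)_+$ has positive modulus $\bigl[(\sqrt a+\sqrt b)\sqrt x/2\bigr]^{\alpha-1/2}$ and phase $-(\alpha-\tfrac12)\arccos\frac{x+\sqrt{ab}}{(\sqrt a+\sqrt b)\sqrt x}$. From the branch $\gamma(x)_+=e^{-i\pi/4}\sqrt x/[(x-a)(b-x)]^{1/4}$ (forced by the cut $[0,b]$), setting $r=|\gamma(x)_+|$ gives the identity $r^2+r^{-2}=((a+b)x-ab)/(x\sqrt{(x-a)(b-x)})$, hence $|\gamma_+\pm\gamma_+^{-1}|/\sqrt 2=\sqrt{((a+b)x-ab)/(2x\sqrt{(x-a)(b-x)})}$; an elementary half-angle computation identifies the phase of $\gamma_++\gamma_+^{-1}$ as $-\arccos(x/\sqrt{(a+b)x-ab})-\pi/4$.

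\textbf{Step 4 (assemble).} The choice of $w_{\rm eff}$ (matched to $D_+D_-$) makes the two complex numbers inside $\operatorname{Re}[\cdot]$ in Step~2 complex conjugates, so their sum collapses to $2$ times a real cosine. Collecting the amplitudes produces $D_\infty\sqrt{(a+b)x-ab}/(x^{\alpha/2}(x-a)^{1/4}(b-x)^{1/4})$, and the total phase accumulates to
\[
n\pi\int_x^b\rho\,ds+(\alpha-\tfrac12)\arccos\tfrac{x+\sqrt{ab}}{(\sqrt a+\sqrt b)\sqrt x}+\arccos\tfrac{x}{\sqrt{(a+b)x-ab}}-\tfrac{\pi}{4},
\]
matching \eqref{PninB-final}.

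The main technical hurdle is the branch bookkeeping in Step~3: extracting the two arccosine terms from the arguments of $D(x)_+$ and of $\gamma(x)_++\gamma(x)_+^{-1}$, and tracking the residual $-\pi/4$ contributed by the branch choice of $\gamma$ on the cut $[0,b]$. Verifying that the moduli of the two terms in Step~2 coincide (so that the real part collapses cleanly) is what ties the $w_{\rm eff}$ prescription to the $\alpha-\tfrac12$ exponent in $D$; once this is confirmed, the remaining assembly is an elementary trigonometric computation.
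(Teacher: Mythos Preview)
Your proposal is correct and follows essentially the same route as the paper: unwind $\mathbf{P}\to\mathbf{Y}\to\mathbf{T}\to\mathbf{S}\to\mathbf{R}$, use the small-norm estimate $\mathbf{R}=I+O(1/n)$ to replace $\mathbf{S}$ by $\mathbf{N}$ on the band, then evaluate the boundary values of $D$ and $\gamma$ via the same modulus--argument identities (your Step~3 reproduces the paper's \eqref{exp-1}--\eqref{exp-2}) to collapse the two terms into a single cosine. One cosmetic point: in Step~2 the $O(1/n)$ must sit \emph{inside} the brackets (multiplied by the exponentially large prefactor $e^{n(cx+\ell)/2}$), as in \eqref{PninB-final}, not as an additive term outside; also note the ``$-\pi i/4$'' in the stated theorem is evidently a typo for $-\pi/4$, which you correctly record.
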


\begin{theorem}\label{asymptoticins}
    (Asymptotics of $P_{n, n}(x)$ in saturated region)
   There exists a $\varepsilon>0$ such that, uniformly for $x$ in a compact subset of the saturated region $(0, a)$,  we have
    \begin{equation}\label{PninS-final}
        \begin{split}
        P_{n, n}(x) = & (-1)^{n+1} e^{n \int_0^{b} \log |x-s| \rho (s) ds}\left[ \sin{(n \pi \sqrt{x})}\frac{x-\sqrt{(a-x)(b-x)}}{\sqrt{x}(a-x)^{\frac{1}{4}}(b-x)^{\frac{1}{4}}}\right.\\
        & \left.\times \left(\frac{x + \sqrt{a b} -\sqrt{(a-x)(b-x)}}{2x}\right)^{\alpha - \frac{1}{2}}\left(1+O\left( \frac{1}{n} \right)\right) + O(e^{-n \varepsilon})\right].
        \end{split}
    \end{equation}
\end{theorem}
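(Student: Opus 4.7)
The plan is to extract the asymptotic formula in the saturated region by inverting the full chain of RH transformations $Y \to X \to T \to S \to R$ constructed in Section~\ref{sec:dl:rhp} and then specializing to $x \in (0,a)$. From the final small-norm problem one has $R(z) = I + O(1/n)$ uniformly on compact subsets bounded away from the endpoints. Writing $P_{n,n}(z) = Y_{11}(z)$ and chasing back through the transformations, one expresses $P_{n,n}(x)$ in terms of $R(x\pm i0)$, the global outer parametrix built from $\mathcal{N}_1,\mathcal{N}_2$ in \eqref{n11-def}--\eqref{n12-def}, the exponential factors coming from the $g$-function and the constant $l$, and the meromorphic interpolating function used to remove the poles on $L_N$. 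The key point for $x$ in the saturated region is that the interpolating function must vanish precisely on the lattice $\{k^2/N^2\}$; a natural choice is built from $\sin\!\bigl(n\pi\sqrt{z}\bigr)$, and this is the origin of the $\sin(n\pi\sqrt{x})$ factor in \eqref{PninS-final}.

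Next, I would open the Bleher--Liechty-style lenses adjacent to the saturated interval $(0,a)$. Because the upper constraint is active there, the two jumps above and below $(0,a)$ produced by the transformation $X \to T$ are upper/lower triangular with off-diagonal entries of size $O(e^{-n\varphi(z)})$ for some $\varphi>0$ on the lens boundaries, which yields the $O(e^{-n\varepsilon})$ error term in the theorem. On $(0,a)$ itself the Euler--Lagrange condition implicit in Proposition~\ref{prop:EM} (namely $2\int_0^b \log|x-s|\rho(s)\,ds - cx - l = 0$ on $(a,b)$ and the dual statement $\rho=\sigma'$ on $(0,a)$) produces the exponential $e^{n\int_0^b \log|x-s|\rho(s)\,ds}$ once the factor $e^{ng(z)}$ from the $T$-transformation is combined with its conjugate boundary value.

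The final step is algebraic simplification. Taking the $\pm i0$ boundary values of $\mathcal{N}_1,\mathcal{N}_2$ on $(0,a)$, using $\gamma(x\pm i0) = \sqrt{x}/[(a-x)^{1/4}(b-x)^{1/4}]\cdot e^{\mp i\pi/4}$ and the corresponding boundary values of the Szegő factor $D(z)$ in \eqref{szego}, one finds explicitly
\[
\mathcal{N}_1(x\pm i0) \quad \text{and} \quad \mathcal{N}_2(x\pm i0),
\]
which combine into expressions carrying the factor $x-\sqrt{(a-x)(b-x)}$ in the numerator and $\sqrt{x}\,(a-x)^{1/4}(b-x)^{1/4}$ in the denominator, and a $D$-factor that simplifies using $\sqrt{ab}+\sqrt{(a-x)(b-x)}$ to the power $\alpha-\tfrac12$. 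Matching the boundary values from above and below produces the bracketed trigonometric identity that collapses to a single $\sin(n\pi\sqrt{x})$ after cancellation.

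The main obstacle will be the bookkeeping in the first two steps: correctly tracking the sign of $\sqrt{z}$ across the saturated interval, the branches of the square-root functions in $\gamma(z)$ and $D(z)$, and the interplay between the $g$-function boundary values and the upper-constraint condition $\rho = \sigma'$ on $(0,a)$. In particular, the identification
\[
2\pi i \int_0^{x} (\rho(s) - \sigma'(s))\,ds \quad \text{on}\ (0,a)
\]
with the phase carried by the $\sin(n\pi\sqrt{x})$ factor (noting $\int_0^x \sigma'(s)\,ds = \sqrt{x}$) is what produces the correct oscillation, and verifying this cleanly — together with confirming that the corresponding off-diagonal lens jumps do decay exponentially at a uniform rate $\varepsilon>0$ on compact subsets of $(0,a)$ — is the technical heart of the argument.
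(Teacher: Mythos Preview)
Your overall strategy---invert the chain of transformations $\mathbf{P}\to\mathbf{Y}\to\mathbf{T}\to\mathbf{S}\to\mathbf{R}$ (your ``$X$'' step does not exist in the paper) and use $\mathbf{R}=I+O(1/n)$---is exactly the paper's approach, and your identification of the interpolating function $\Pi(z)=\tfrac{2\sqrt{z}}{n\pi}\sin(n\pi\sqrt{z})$ as the source of the $\sin(n\pi\sqrt{x})$ factor is correct. However, two points in your sketch are off and would derail the computation if followed literally.

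First, the leading term in the saturated region involves \emph{only} $\mathcal{N}_1$, not a combination of $\mathcal{N}_1$ and $\mathcal{N}_2$. Unwinding the transformations for $x\in(0,a)$ and $\im z>0$ gives
\[
P_{n,n}(x)= -2i\sin(n\pi\sqrt{x})\Bigl[e^{in\pi\sqrt{x}+ng_+(x)}[\mathbf{N}(x)]_{11,+}\bigl(1+O(\tfrac1n)\bigr)
+x^{-\alpha+\frac12}e^{-n(g_+(x)-cx-l)}[\mathbf{N}(x)]_{12,+}\bigl(1+O(\tfrac1n)\bigr)\Bigr].
\]
The second bracket, which carries $[\mathbf{N}]_{12}=\mathcal{N}_2$, is the $O(e^{-n\varepsilon})$ term: on $(0,a)$ the variational condition is a strict inequality $g_+(x)+g_-(x)-cx-l>0$, so $e^{-n(g_+(x)-cx-l)}=e^{-n(g_+ + g_- - cx - l)/2}e^{n(g_+-g_-)/2}$ is exponentially small relative to the first term. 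Thus the exponentially small error in the theorem is not produced by lens jumps being small (those feed into $\mathbf{R}=I+O(1/n)$), but by this subdominant $\mathcal{N}_2$ contribution. Second, there is no ``matching of boundary values from above and below'' that collapses to a single sine: the $\sin(n\pi\sqrt{x})$ comes out in one step from the explicit matrices $A_\pm^{-1}$ and $(D_\pm^l)^{-1}$ (which contain $\Pi(z)$), and the upper- and lower-half-plane computations give the same answer directly. The algebraic factor $\dfrac{x-\sqrt{(a-x)(b-x)}}{\sqrt{x}(a-x)^{1/4}(b-x)^{1/4}}$ and the $D$-power arise purely from evaluating $-2i\,[\mathbf{N}(x)]_{11,+}=-2i\,\mathcal{N}_{1,+}(x)$ using the boundary values of $\gamma$ and $D$ on $(0,a)$.
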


\begin{remark}
    Asymptotic results in the above theorems also hold in a neighborhood in the complex plane. More precisely, there exists a $\delta>0$ such that \eqref{PninB-final} and \eqref{PninS-final} are valid in $\{z\hspace{0.25em}|\hspace{0.25em} a + \delta \le \re{z} \le b-\delta, -\delta \le \im{z} \le \delta\}$ and $\{z\hspace{0.25em}|\hspace{0.25em} \delta \le \re{z} \le a-\delta, -\delta \le \im{z} \le \delta\}$, respectively.
    %One can also extend the above asymptotic approximations \eqref{PninB-final} and \eqref{PninS-final} into neighborhoods of the intervals $(a, b)$ and $(0, a)$, which are the rectangles $\{z \in \mathbb{C}: a< \re{z}<b, -\delta < \im{z} < \delta\}$ and $\{z \in \mathbb{C}: 0< \re{z}<a, -\delta < \im{z} < \delta\}$, respectively, with $\delta > 0$.
\end{remark}

Note that the above asymptotic results do not hold in the neighbourhood of the endpoints 0, $a$ and $b$. Indeed, some special functions will appear in the asymptotics of $P_{n,n}(z)$, such as the Gamma function near the origin, and the Airy functions near the endpoints $a$ and $b$.

To state the asymptotic result near the origin, let us introduce the following function
\begin{equation}\label{H*-def}
    H^*(z)=H(z) \times \begin{cases}
        (1 - e^{2 i n \pi \sqrt{z}}), & z \in \mathbb{C}_+,\\
        (1 - e^{-2 i n \pi \sqrt{z}}), & z \in \mathbb{C}_-,
    \end{cases} \quad z \in \mathbb{C} \setminus (- i \infty, 0] \cup [0, \infty),
\end{equation}
where
\begin{equation}\label{H-def}
	H(z)= \frac{e^{n z^{\frac{1}{2}}} \Gamma(n z^{\frac{1}{2}})}{\sqrt{2 \pi} (n z^{\frac{1}{2}})^{n z^{\frac{1}{2}}- \frac{1}{2}}}, \quad z \in \mathbb{C} \setminus (- i \infty, 0].
\end{equation}
In the above formulas, we take $\arg z \in (-\frac{\pi}{2}, \frac{3 \pi}{2})$ for $z^{\frac{1}{2}}$ and $\arg z \in (- \pi, \pi)$ for $\sqrt{z}$.

\begin{theorem}\label{asymptotic0}
    (Asymptotics of $P_{n, n}(z)$ near $0$)
    There exists a $\delta>0$, such that for $|z| < \delta$, we have
    \begin{equation} \label{Pnn-void-main}
        P_{n, n}(z) = e^{n g(z)} \mathcal{N}_1(z) H^{*}(z)  \left(I + O\left( \frac{1}{n} \right) \right),
    \end{equation}
   where $\mathcal{N}_1(z)$ is given in \eqref{n11-def}, and the function $H^*(z)$ is given in \eqref{H*-def}.
\end{theorem}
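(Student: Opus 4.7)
The plan is to extract \eqref{Pnn-void-main} from the chain of Riemann–Hilbert transformations developed in Section~\ref{sec:dl:rhp} by unwinding each step inside a fixed disk $U_0 = \{|z| < \delta\}$ centered at the origin. Since $P_{n,n}(z)$ is the $(1,1)$ entry of the matrix $Y(z)$ solving the initial interpolation problem, the task reduces to expressing $Y$ on $U_0$ in terms of the final small-norm matrix $R(z) = I + O(1/n)$ together with the outer and local parametrices.

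First I would trace the sequence $Y \to X \to T \to S$ backwards. The pole-removal step $Y \to X$, which converts the discrete interpolation problem to a continuous RH problem on $(0,\infty)$, contributes precisely the factors $1 - e^{\pm 2 i n \pi \sqrt{z}}$ in the upper/lower half planes that appear in the definition \eqref{H*-def} of $H^{*}(z)$. The $g$-function normalization $T = X\, e^{-n g(z)\sigma_3}\, e^{-n l \sigma_3/2}$ will contribute the exponential $e^{n g(z)}$ visible in the final formula. The opening of lenses from $T$ to $S$ produces triangular factors whose $(1,1)$ entries equal $1$ in the regions abutting the saturated segment, so they affect only the subleading error.

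The heart of the argument is the construction of the local parametrix $P^{(0)}(z)$ on $U_0$, which is the sole place the Gamma function enters. One builds a model RH problem whose $(1,1)$ entry is essentially the function $H(z)$ of \eqref{H-def}: the ratio $e^{n z^{1/2}} \Gamma(n z^{1/2}) / \bigl(\sqrt{2\pi}\, (n z^{1/2})^{n z^{1/2}-1/2}\bigr)$ is designed so that its jumps across the contours inside $U_0$ match those of $S(z)$ exactly, while Stirling's formula for the Gamma function guarantees the matching identity $P^{(0)}(z) = \mathcal{N}(z)\bigl(I + O(1/n)\bigr)$ on $\partial U_0$. An analytic prefactor $E(z)$, fixed by this matching condition, is what produces the outer-parametrix entry $\mathcal{N}_1(z)$ upon extracting the $(1,1)$ component. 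Composing $R = S \, P^{(0),-1}$ on $U_0$ with the inverses of $Y \to X \to T \to S$ and reading off the $(1,1)$ entry then yields \eqref{Pnn-void-main}.

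The main obstacle will be the careful bookkeeping inside the construction of $P^{(0)}(z)$: one must verify that the Gamma-function model simultaneously cancels the exponentially large and oscillatory contributions arising from the pole-removal step and from the $g$-function, and that the resulting prefactor $E(z)$ is genuinely analytic on $U_0$ with the correct boundary behavior on $\partial U_0$. A further subtlety is the consistent treatment of the two distinct branches of the square root used in $H(z)$ and in $H^{*}(z)$: since the jump of $S$ on the saturated interval $(0,a)\cap U_0$ involves the boundary values of $1 - e^{\pm 2 i n \pi \sqrt{z}}$, these must be lined up correctly with the piecewise definition of $H^{*}(z)$ across $\mathbb{C}_{\pm}$ in order for the cut of $z^{1/2}$ along the negative imaginary axis to disappear from the final formula for $P_{n,n}(z)$.
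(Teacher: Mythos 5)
Your proposal follows essentially the same route as the paper: unwind the chain $\mathbf{P}\to\mathbf{Y}\to\mathbf{T}\to\mathbf{S}\to\mathbf{R}$ inside a small disk at the origin, insert the Gamma-function local parametrix whose matching on $\partial D(0,\varepsilon)$ is supplied by Stirling's formula, and read off the $(1,1)$ entry using $\mathbf{R}=I+O(1/n)$. Two small bookkeeping points where your sketch departs from the paper's actual construction: the local parametrix near $0$ is simply $\mathbf{P}^{(0)}(z)=\mathbf{N}(z)Q(z)$ with $Q$ diagonal, so there is no separately constructed analytic prefactor $E(z)$ as in the Airy cases; and the $\mathbf{T}\to\mathbf{S}$ step in the region bordering the saturated interval is effected by the \emph{diagonal} matrices $A_\pm(z)$ (together with $D_\pm^l$ in the $\mathbf{P}\to\mathbf{Y}$ step), not by triangular lens factors with unit $(1,1)$ entry, and it is precisely these factors that supply $1-e^{\pm 2in\pi\sqrt z}$ on the $\re z>0$ side, matching the definition of $H^*(z)$; for $\re z<0$ that factor is instead built directly into $Q=H^*(z)^{\sigma_3}$.
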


\begin{remark}
    Although the functions $g(z)$, $\mathcal{N}_1(z)$ and $H^{*}(z)$ are not analytic on the interval $(0, \delta)$, one can show that the approximation \eqref{Pnn-void-main} holds  for $x \in (0, \delta)$. Let $f_{\pm}(x)$ denote the limiting values of $f(z)$ as $z \to x$  from the upper and lower half plane. Then, for $x \in (0, \delta)$, we have
    \begin{equation}
        P_{n, n}(x) = 2 i (-1)^{n+1}\sin{(n \pi \sqrt{x})} e^{n \int_0^{b} \log |x - s| \rho (s) ds} \mathcal{N}_{1, +}(x) H(x)  \left(I + O\left( \frac{1}{n} \right) \right),
    \end{equation}
    and
    \begin{equation}
        P_{n, n}(x) = -2 i (-1)^{n+1}\sin{(n \pi \sqrt{x})} e^{n \int_0^{b} \log |x - s| \rho (s) ds} \mathcal{N}_{1, -}(x) H(x)  \left(I + O\left( \frac{1}{n} \right) \right),
    \end{equation}
     where  $H(x)$ is given in \eqref{H-def}. From the definition of $\mathcal{N}_1(z)$ in \eqref{n11-def}, one can verify that $\mathcal{N}_{1, +}(x) = - \mathcal{N}_{1, -}(x)$ for $x \in (0, \delta)$. Therefore, the above two formulas indeed agree with each other.
\end{remark}

Near the band-void edge point $b$, the asymptotic expansion is given in terms of the Airy functions $\Ai(\cdot)$, and near the saturated region-band edge point $a$,  both the $\Ai(\cdot)$ and $\Bi(\cdot)$ functions appear in the asymptotic expansion. Let us introduce the following functions $f(z)$ and $\widetilde{f}(z)$, which are analytic in the neighbourhood of $b$ and $a$, respectively,
\begin{eqnarray}
f(z) &=& - \left(\frac{3\pi}{2} \int_z^{b} \rho(s)ds\right)^{\frac{2}{3}}, \qquad  z \in \mathbb{C} \setminus (-\infty, a], \label{f-def} \\
\widetilde{f}(z)& =& \left(\frac{3\pi}{2} \int_{a}^z \left(\frac{1}{2\sqrt{s}}-\rho (s)\right)ds\right)^{\frac{2}{3}}, \qquad  z \in \mathbb{C} \setminus (-\infty, 0] \cup [b, \infty).  \label{f2-def}
\end{eqnarray}
The analyticity of  $f(z)$ and $\widetilde{f}(z)$ in the neighbourhood of $b$ and $a$ follows from the asymptotics of $\rho(x)$ near the endpoints in \eqref{rho-a} and \eqref{rho-b}, respectively.

\begin{theorem}\label{asymptoticb}
    (Asymptotics of $P_{n, n}(z)$ at band-void edge point $b$)
   There exists a $\delta>0$, such that for $|z-b|<\delta$, we have
    \begin{equation}\label{Pnn-b-main}
        \begin{split}
            P_{n, n}(z)
            = & \sqrt{\pi} e^{\frac{n}{2}(cz+l)} \left [n^{\frac{1}{6}} f(z)^{\frac{1}{4}} \Ai(n^{\frac{2}{3}}f(z))\left(\mathcal{N}_1(z)-iz^{-\alpha +\frac{1}{2}}\mathcal{N}_2(z) + O\left( \frac{1}{n} \right)\right) \right. \\
             & \left. -n^{-\frac{1}{6}}f(z)^{-\frac{1}{4}}\Ai'(n^{\frac{2}{3}}f(z))\left(\mathcal{N}_1(z)+iz^{-\alpha +\frac{1}{2}}\mathcal{N}_2(z) + O\left( \frac{1}{n} \right)\right) \right],
        \end{split}
    \end{equation}
    where the constant $l$ is given in \eqref{l-def} and the functions $\mathcal{N}_1(z)$, $\mathcal{N}_2(z)$ and $f(z)$ are given in \eqref{n11-def}, \eqref{n12-def} and \eqref{f-def}, respectively.
\end{theorem}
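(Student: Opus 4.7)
The plan is to combine the Deift--Zhou transformation chain $Y \to T \to S \to R$ built in Section~\ref{sec:dl:rhp} with a local Airy parametrix in a disc $U_b = \{z : |z-b| < \delta\}$ around the soft edge $b$, then extract $P_{n,n}(z) = Y_{11}(z)$ by unwinding the chain.

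First I would verify, using \eqref{rho-b}, that $f(z)$ of \eqref{f-def} is analytic and conformal at $b$, mapping $[b-\delta, b]$ onto a segment of the negative real axis and sending the lens contours to the four Airy rays. With $\zeta = n^{2/3} f(z)$ as the rescaled coordinate, I construct the local parametrix in the standard Airy form
\begin{equation*}
P^{(b)}(z) = E_b(z)\, \Psi_{\Ai}\!\bigl(n^{2/3} f(z)\bigr)\, e^{-n \varphi(z) \sigma_3},
\end{equation*}
where $\Psi_{\Ai}$ is the Airy model solution (whose first row involves $\Ai$ and $\Ai'$), $\varphi(z)$ is the $g$-function phase that analytically continues off the band, and the analytic prefactor
\begin{equation*}
E_b(z) = P^{(\infty)}(z)\, \frac{1}{\sqrt{2}}\begin{pmatrix} 1 & -1 \\ 1 & 1 \end{pmatrix} \bigl(n^{2/3} f(z)\bigr)^{\sigma_3/4}
\end{equation*}
is chosen so that the matching $P^{(b)}(z) \bigl(P^{(\infty)}(z)\bigr)^{-1} = I + O(n^{-1})$ holds on $\partial U_b$. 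Here $P^{(\infty)}$ is the outer parametrix built from $\mathcal{N}_1$ and $\mathcal{N}_2$ of \eqref{n11-def}--\eqref{n12-def}. Combined with the analogous parametrices at $0$ and $a$, a small-norm argument then yields $R(z) = I + O(1/n)$ uniformly on $U_b$.

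Unwinding the chain, for $z \in U_b$ off the lens I would write
\begin{equation*}
Y(z) = e^{\frac{n l}{2} \sigma_3}\, R(z)\, E_b(z)\, \Psi_{\Ai}\!\bigl(n^{2/3} f(z)\bigr)\, e^{-n \varphi(z) \sigma_3}\, e^{n(g(z) - l/2)\sigma_3}\, w_N(z)^{-\sigma_3/2},
\end{equation*}
and read off $P_{n,n}(z) = Y_{11}(z)$. The scalar prefactor $e^{\frac{n}{2}(cz+l)}$ in \eqref{Pnn-b-main} emerges by combining $e^{nl\sigma_3/2}$, the analytic continuation of $e^{n g(z)}$ across the band, and $e^{-Ncz/2}$ from $w_N^{-1/2}$. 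The weights $n^{\pm 1/6} f(z)^{\pm 1/4}$ are produced directly by the diagonal factor $(n^{2/3} f)^{\sigma_3/4}$ inside $E_b$, while $\Ai(n^{2/3}f)$ and $\Ai'(n^{2/3}f)$ are the first-row entries of $\Psi_{\Ai}$ paired against the first row of $E_b$.

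The main obstacle is verifying that the first row of $E_b(z)$ produces exactly $\mathcal{N}_1(z) \mp i z^{-\alpha + 1/2} \mathcal{N}_2(z)$ as coefficients of $\Ai$ and $\Ai'$. The $z^{-\alpha+1/2}$ attached to $\mathcal{N}_2$ traces back to combining the Szegő factor $D(z)$ of exponent $\alpha - \tfrac{1}{2}$ in \eqref{szego} with the $z^{\alpha/2}$ contributed by $w_N(z)^{-1/2}$ during the reverse $S \to T \to Y$ step, while the $\mp i$ pairing is a consequence of the rotation $\tfrac{1}{\sqrt{2}}\bigl(\begin{smallmatrix}1 & -1 \\ 1 & 1\end{smallmatrix}\bigr)$ and the $\sigma_3/4$ power inside $E_b$. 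Tracking the branches of $\gamma(z)$ across $[0,b]$ and of $\sqrt{(z-a)(z-b)}$ in $D(z)$ across $[a,b]$, and verifying that the triangular lens factor reduces to identity on the component of $U_b$ where the asymptotics are read off, are the delicate points; everything else is routine within the Airy-parametrix machinery of the Deift--Zhou framework.
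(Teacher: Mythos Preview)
Your approach is essentially the paper's: unwind the transformation chain, insert the Airy local parametrix \eqref{Pb-def}, use $\mathbf{R}(z)=I+O(1/n)$, and read off the $(1,1)$ entry. Two points of imprecision worth flagging. First, you write the chain as $Y\to T\to S\to R$ with $P_{n,n}=Y_{11}$, but in this discrete setting there is an earlier step $\mathbf{P}\to\mathbf{Y}$ (see \eqref{iptorhp}) carrying the pole-removing factors $K$ and $D_{\pm}^u(z)^{-1}$; since $K$ has $(1,1)$ entry $1$ and $D_{\pm}^u$ is unit upper-triangular, these do not alter the $(1,1)$ entry, so your argument survives once this is noted. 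Second, the scalar you write as $w_N(z)^{-\sigma_3/2}$ should be $z^{(-\alpha/2+1/4)\sigma_3}$: after the discrete-to-continuous conversion the effective weight is $x^{\alpha-1/2}e^{-ncx}$ (Remark~\ref{changeweight}), and it is this shifted exponent, together with the $z^{(\alpha/2-1/4)\sigma_3}$ inside $E^{(b)}$ in \eqref{lp:b:E}, that produces the $z^{-\alpha+1/2}$ multiplying $\mathcal{N}_2$.
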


\begin{theorem}\label{asymptotica}
    (Asymptotics of $P_{n, n}(z)$ at saturated region-band edge point $a$)
    There exists a $\delta>0$, such that for $|z-a|<\delta$, we have
    \begin{equation}\label{Pnn-a-main}
        \begin{split}
            P_{n, n}(z)
            = & (-1)^n \sqrt{\pi} e^{\frac{n}{2}(cz+l)} \left [n^{\frac{1}{6}} (-\widetilde{f}(z))^{\frac{1}{4}} \eta_1(z)\left (i \mathcal{N}_1(z)+z^{-\alpha +\frac{1}{2}} \mathcal{N}_2(z) + O\left( \frac{1}{n} \right)\right) \right. \\
             &\left. +n^{-\frac{1}{6}} (-\widetilde{f}(z))^{-\frac{1}{4}}\eta_2(z)\left(i \mathcal{N}_1(z)-z^{-\alpha +\frac{1}{2}}\mathcal{N}_2(z) + O\left( \frac{1}{n} \right)\right)\right],
        \end{split}
    \end{equation}
     where the constant $l$ is given in \eqref{l-def} and the functions $\mathcal{N}_1(z)$, $\mathcal{N}_2(z)$ and $\widetilde{f}(z)$ are given in \eqref{n11-def}, \eqref{n12-def} and \eqref{f2-def}, respectively, and
    \begin{eqnarray}
    \eta_1(z) &=& \cos{(n \pi \sqrt{z})} \Ai (-n^{\frac{2}{3}}\widetilde{f}(z))- \sin{(n \pi \sqrt{z})} \Bi(-n^{\frac{2}{3}}\widetilde{f}(z)), \\
     \eta_2(z) &=& \cos{(n \pi \sqrt{z})} \Ai' (-n^{\frac{2}{3}}\widetilde{f}(z))- \sin{(n \pi \sqrt{z})} \Bi'(-n^{\frac{2}{3}}\widetilde{f}(z)).
    \end{eqnarray}
\end{theorem}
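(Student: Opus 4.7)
The plan is to extract $P_{n,n}(z) = [Y(z)]_{11}$ in the disk $U_a = \{z : |z-a| < \delta\}$ by inverting the chain of Riemann-Hilbert transformations $Y \to T \to S \to R$ constructed in Section \ref{sec:dl:rhp}, and then reading off the $(1,1)$ entry of the resulting product. The $g$-function normalization together with the opening of lenses expresses $Y(z)$ in $U_a$ as a product of $R(z)$, the local parametrix $P_a(z)$ centred at $a$, and auxiliary jump/lens factors, multiplied by the overall diagonal conjugation $\text{diag}(e^{n g(z)}, e^{-n g(z)})$. Combining $e^{n g(z)}$ with the boundary identity $g_+(x) + g_-(x) = cx + l$, valid on the band, produces the exponential prefactor $e^{\frac{n}{2}(cz+l)}$ seen in \eqref{Pnn-a-main}.

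The key step is the construction of the local parametrix $P_a$. Near an ordinary soft edge one simply transports the standard Airy parametrix through the conformal map $\widetilde{f}(z)$ of \eqref{f2-def}, whose analyticity in $U_a$ is guaranteed by the square-root behaviour \eqref{rho-a}. At the band-saturated region edge $a$, however, the saturated side carries an additional jump stemming from the identity $\rho(x) = \sigma'(x) = 1/(2\sqrt{x})$ rather than from a lens opening. The local model must therefore glue the standard Airy matrix on the two band-side sectors to an extra sector on the saturated side carrying the constraint jump. Since the accumulated phase of the constraint density from $0$ to $z$ is $n\pi \int_0^z \sigma'(s)\,ds = n\pi\sqrt{z}$, a direct solution of the model problem shows that the band-side entries of $P_a$ acquire precisely the combinations
\[
   n^{\frac{1}{6}} (-\widetilde{f}(z))^{\frac{1}{4}} \eta_1(z) \qquad \text{and} \qquad n^{-\frac{1}{6}} (-\widetilde{f}(z))^{-\frac{1}{4}} \eta_2(z),
\]
with $\eta_1, \eta_2$ as in the statement of the theorem; the $\Bi$ contributions are precisely what is needed to make the extra sector self-consistent.

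Matching $P_a$ to the global parametrix $\mathcal{N}$ built from $\mathcal{N}_1, \mathcal{N}_2$ and the Szeg\H{o} function $D(z)$ in \eqref{szego} on $\partial U_a$ produces a small-norm problem with $R(z) = I + O(1/n)$ uniformly in the plane. Reading off the $(1,1)$ entry of the product $R \cdot P_a \cdot (\text{outer factor})$ and tracking the branches of $\sqrt{z}$, $(z-a)^{1/4}$ and $(z-b)^{1/4}$ across the cuts reproduces the bracket in \eqref{Pnn-a-main}. The factor $z^{-\alpha + 1/2}$ appearing in front of $\mathcal{N}_2$ comes from transporting the $D(z)$ weight through the parametrix, and the overall $(-1)^n$ sign records the parity picked up when the saturated-side branch of $\sqrt{z}$ is continued around $a$ back to the band side.

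The main obstacle is the explicit construction of this modified Airy parametrix: one must enlarge the Deift-Zhou Airy model by one extra sector on the saturated side, solve the jumps exactly there, and verify that the matching condition on $\partial U_a$ is still of order $O(1/n)$ after the constraint jump is included. The appearance of both $\Ai$ and $\Bi$ in $\eta_1, \eta_2$, as opposed to only $\Ai$ at the void edge $b$ in Theorem \ref{asymptoticb}, is a direct consequence of this enlarged contour geometry. Once the model is in hand, the rest of the proof is a careful but routine bookkeeping of prefactors and branch choices.
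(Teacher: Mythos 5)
Your high-level plan is the same as the paper's: invert the chain $\mathbf{P}\to\mathbf{Y}\to\mathbf{T}\to\mathbf{S}\to\mathbf{R}$, build a local parametrix on a disk $D(a,\varepsilon)$, match it to the global parametrix $\mathbf{N}$ so that $\mathbf{R}=I+O(1/n)$, and read off $P_{n,n}(z)=[\mathbf{P}(z)]_{11}$ after propagating the $e^{ng(z)}$ factor through the band identity $g_++g_-=cz+l$. Where you diverge is in the one step that actually carries the content of the theorem, the construction of $\mathbf{P}^{(a)}$. You propose to \emph{enlarge} the Deift--Zhou Airy model by an extra sector on the saturated side and ``solve the jumps exactly there,'' treating the $\Bi$ contributions as a consequence of that extra sector; but you never carry out this construction, you simply assert that a ``direct solution of the model problem shows'' the right combinations $\eta_1,\eta_2$ emerge, and that the matching error is $O(1/n)$. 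Those two claims are precisely the hard part of the proof and cannot be taken as given.

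The paper does something different and more explicit: it does not modify the Airy model at all. It sets $\widetilde{\Psi}(\zeta)=\sigma_3\Psi(-\zeta)\sigma_3$, keeping the standard four-sector parametrix $\Psi$ of \eqref{airy-pmx}, and defines
\[
  \mathbf{P}^{(a)}(z)=\pm E^{(a)}(z)\,\widetilde{\Psi}\bigl(n^{2/3}\widetilde{f}(z)\bigr)\,\sigma_1\,
  e^{-n\bigl(g(z)-\frac{cz}{2}-\frac{l}{2}\pm i\pi\sqrt{z}\bigr)\sigma_3}\,z^{(-\frac{\alpha}{2}+\frac{1}{4})\sigma_3},
  \qquad \pm\im z\ge 0,
\]
so the lattice oscillation $e^{\pm in\pi\sqrt{z}}$ is absorbed \emph{exactly} into the scalar conjugation, and the extra vertical jump on $(a,a\pm i\varepsilon)$ is taken care of by the sign switch in $\pm E^{(a)}(z)$ together with the analyticity of $E^{(a)}$ across $(a-\varepsilon,a+\varepsilon)$. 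The functions $\Bi,\Bi'$ in $\eta_1,\eta_2$ then appear \emph{after the fact}, when $\Ai(-\omega\,n^{2/3}\widetilde{f})$ and $\Ai'(-\omega\,n^{2/3}\widetilde{f})$ from the off-diagonal columns of $\Psi$ are rewritten in terms of $\Ai,\Bi$ and coupled with the $e^{\pm in\pi\sqrt{z}}$ factors to form $\cos(n\pi\sqrt{z})$ and $\sin(n\pi\sqrt{z})$ — not from an extra sector of a modified RHP. Likewise, the $(-1)^n$ is placed into $E^{(a)}(z)$ in \eqref{lp:a:E} so that $E^{(a)}$ is analytic across $(a-\varepsilon,a)$ where $\mathbf{N}_+=-\mathbf{N}_-$; it is not a branch-monodromy effect of $\sqrt{z}$, which is single-valued near $a$. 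So: your skeleton is right, your intuition about where the $\Bi$ and the lattice phase $n\pi\sqrt{z}$ must enter is sound, but the proposal leaves the local model as a black box, and the route you sketch (build and solve a new model problem) is not the one the paper takes.
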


%\begin{remark}
%    Theorem \ref{asymptotica} and Theorem \ref{asymptoticb} use Airy functions $\Ai(z)$ and $\Bi(z)$, for the properties of Airy functions, see \cite{olver1997}.
%\end{remark}

The main tool to prove the above theorems is the RH analysis, which will be conducted in the next section.

\section{Riemann-Hilbert analysis}
\label{sec:dl:rhp}

\subsection{Interpolation problem}
\label{sec:dl:ip}
For the discrete orthogonal polynomials \eqref{Pn,n}, we construct the interpolation problem for a $2 \times 2$ matrix-valued function $\mathbf{P}(z)$ with following properties:
\begin{itemize}
    \item \emph{Analyticity}: $\mathbf{P}(z)$ is analytic for $z \in \mathbb{C} \setminus L_{N}$.
    \item \emph{Residues at poles}: At each node $x \in L_{N}$, entries in the first column of the matrix $\mathbf{P}(z)$ are analytic functions of $z$, while those in the second column have a simple pole at $x$ with residues given by
    \begin{equation}
        \Res_{z=x} \Big[ \mathbf{P}(z) \Big]_{j2}=w(x) \Big[\mathbf{P}(x)\Big]_{j1}, \quad  j=1,2.
    \end{equation}
    \item \emph{Asymptotics at infinity}: $\mathbf{P}(z)$ admits the asymptotic expansion,
    \begin{equation}\label{Pinfty}
        \mathbf{P}(z) = \left(I+\frac{\mathbf{P}_1}{z}+\frac{\mathbf{P}_2}{z^2}+O\left(\frac{1}{z^3}\right)\right)
        \begin{pmatrix}
            z^n & 0 \\
            0 & z^{-n}
        \end{pmatrix}, \qquad \textrm{as } z \to \infty.
    \end{equation}
\end{itemize}
By the well-known theorem of Fokas, Its and Kitaev \cite{fokas-its-kitaev}, the solution to the above problem is given as follows.
\begin{proposition}\label{ipsolution}
    The interpolation problem has  a unique solution:
    \begin{equation}\label{P-sol}
        \mathbf{P}(z)=
        \begin{pmatrix}
        P_{n, n}(z) & \frac{1}{n}C(wP_{n, n})(z) \\
        \frac{1}{h_{n-1, n}}P_{n-1, n}(z) & \frac{1}{nh_{n-1, n}}C(wP_{n-1, n})(z)
        \end{pmatrix}
    \end{equation}
    with the discrete Cauchy transformation
    \begin{equation}
        C(f)(z)=\sum_{x \in L_{N}} \frac{f(x)}{z-x}.
    \end{equation}
\end{proposition}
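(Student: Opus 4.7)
The plan is to verify directly that the matrix in \eqref{P-sol} satisfies all three defining conditions of the interpolation problem and then to deduce uniqueness by a standard Liouville-type argument. The verification rests on one basic property of the discrete Cauchy transform: $C(f)(z)$ is meromorphic on $\mathbb{C}\setminus L_N$ with a simple pole at each node $x\in L_N$ and residue $f(x)$, the absolute convergence of the defining series away from $L_N$ being guaranteed by the exponential decay of $w(x)=x^{\alpha}e^{-ncx}$ along $L_N$. The first-column entries $P_{n,n}(z)$ and $P_{n-1,n}(z)/h_{n-1,n}$ are polynomials, hence entire, so analyticity is immediate. For the residue condition, I would compute $\Res_{z=x}[\mathbf{P}(z)]_{j2}$ directly from the Cauchy transforms appearing in the second column and match them against $w(x)[\mathbf{P}(x)]_{j1}$; the prefactors $1/n$ and $1/(n h_{n-1,n})$ are chosen precisely so that these two quantities agree.

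For the asymptotic expansion \eqref{Pinfty}, I would expand the Cauchy transforms in powers of $1/z$,
\[
C(wP_{k,n})(z)=\sum_{j=0}^{\infty}z^{-j-1}\sum_{x\in L_N}x^j\, w(x)P_{k,n}(x),
\]
and invoke the orthogonality \eqref{orthogonality}. For $k=n$ the inner sum vanishes for $j=0,\dots,n-1$ and equals $h_{n,n}$ when $j=n$; for $k=n-1$ it vanishes for $j=0,\dots,n-2$ and equals $h_{n-1,n}$ when $j=n-1$. Combined with the leading behaviour $P_{n,n}(z)=z^n+O(z^{n-1})$ of the monic polynomials, this produces the required diagonal factor $\mathrm{diag}(z^n,z^{-n})$ multiplied by $I+\mathbf{P}_1/z+\cdots$.

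The main point that requires care is uniqueness. First I would show that $\det\mathbf{P}(z)$ is entire: at each $x\in L_N$ the second column of $\mathbf{P}$ has a residue proportional to the first column of $\mathbf{P}(x)$, so the columns become linearly dependent after extracting the pole, making the singular contribution to $\det\mathbf{P}(z)$ vanish. Since $\det\mathbf{P}(z)\to 1$ as $z\to\infty$ from \eqref{Pinfty}, Liouville's theorem gives $\det\mathbf{P}(z)\equiv 1$. Given a second solution $\mathbf{Q}(z)$, the product $\mathbf{Q}(z)\mathbf{P}(z)^{-1}$ is then well defined on $\mathbb{C}\setminus L_N$; a direct calculation with the common rank-one residue structure shows that every apparent pole at $x\in L_N$ is removable, so the product extends to an entire function approaching $I$ at infinity, and Liouville forces $\mathbf{Q}\equiv\mathbf{P}$. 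This pole-cancellation step, which exploits the fact that the residue data is rank-one and identical for both candidate solutions, is the only place I would expect to need to write out the algebra in detail.
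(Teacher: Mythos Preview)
Your proposal is correct and follows the standard Fokas--Its--Kitaev argument, which is exactly what the paper invokes: the paper does not write out a proof but simply cites \cite{fokas-its-kitaev} and then verifies the asymptotic condition \eqref{Pinfty} for the $(1,2)$ entry via the orthogonality expansion you describe. Your outline fills in the remaining details (analyticity, residues, and the Liouville uniqueness step) in the expected way, so there is no substantive difference in approach.
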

Recalling the orthogonality relation \eqref{orthogonality}, we have, as $z \to \infty$,
\begin{equation}
    C(wP_{n, n})(z) = \sum_{x \in L_{N}} \frac{w(x)P_{n, n}(x)}{z-x} \sim \sum_{x \in L_{N}} w(x) P_{n, n}(x) \sum_{j=0}^{\infty} \frac{x^j}{z^{j+1}} = \frac{h_{n, n}}{z^{n+1}} + O\left( \frac{1}{z^{n+2}} \right),
\end{equation}
which justifies asymptotic expansion \eqref{Pinfty} and implies that
\begin{equation}\label{hN}
    h_{n, n} = [\mathbf{P}_1]_{12}, \quad h_{n-1, n}^{-1} = [\mathbf{P}_1]_{21}.
\end{equation}
Moreover, the recurrence coefficients in \eqref{recurrence} are given by
\begin{equation}\label{gammabeta}
    \mathscr{A}_{n, n}^2 = [\mathbf{P}_1]_{12}[\mathbf{P}_1]_{21}, \quad \mathscr{B}_{n, n} = \frac{[\mathbf{P}_2]_{12}}{[\mathbf{P}_1]_{12}} + [\mathbf{P}_1]_{11}.
\end{equation}
Next, we follow the approach in Bleher and Liechty \cite{bleher2011} to convert the interpolation problem to a continuous RH problem, where the condition on poles and residues is replaced by jump conditions on certain contours in the complex plane.

\subsection{Riemann-Hilbert problem}
In the first step, we remove the poles on the real line and transform the discrete interpolation problem to a continuous RH problem. Let $\Pi (z)$ be defined as
\begin{equation}
    \Pi (z) = \frac{2 \sqrt{z} \sin {(n\pi \sqrt{z})}}{n \pi},
\end{equation}
which is an entire function. Moreover, for  $x_k \in L_N$, we have
\begin{equation}
    \Pi (x_k) = 0, \quad \Pi '(x_k) = \cos{(n\pi \sqrt{x_k}}) = (-1)^k.
\end{equation}
Let us further introduce the following matrix-valued functions:
\begin{equation} \label{yu-def}
    \mathbf{Y}^u(z) = \mathbf{P}(z) \times \begin{cases}
                        D_{+}^u(z), & \textrm{$\im z \ge 0$}, \\
                        D_{-}^u(z), & \textrm{$\im z \le 0$},
                        \end{cases}
\end{equation}
and
\begin{equation}
    \mathbf{Y}^l(z) = \mathbf{P}(z) \times \begin{cases}
                        D_{+}^l(z), & \textrm{$\im z \ge 0$}, \\
                        D_{-}^l(z), & \textrm{$\im z \le 0$},
                        \end{cases}
\end{equation}
where $D_{\pm}^u(z)$ are  upper and lower triangular matrices given below:
\begin{equation}\label{Du}
    D_{\pm}^u(z)=
    \begin{pmatrix}
        1 & -\frac{w(z)}{\Pi(z)}e^{\pm i n \pi \sqrt{z}}\\
        0 & 1
    \end{pmatrix}
\end{equation}
and
\begin{equation}\label{Dl}
    D_{\pm}^l(z)=
    \begin{pmatrix}
        \frac{\sqrt{z}}{\Pi (z)} & 0\\
        -\frac{\sqrt{z}}{w(z)}e^{\pm i n \pi \sqrt{z}} & \frac{\Pi (z)}{\sqrt{z}}
    \end{pmatrix}.
\end{equation}
The functions $\mathbf{Y}^u(z)$ and $\mathbf{Y}^l(z)$ are meromorphic on the closed upper and lower complex planes and are two-valued on the real axis. Although $\mathbf{P}(z)$ has poles on the lattice $L_N$, one can show that after multiplying the functions $D_{\pm}^u(z)$ and $D_{\pm}^l(z)$, the poles in $\mathbf{Y}^u(z)$ and $\mathbf{Y}^l(z)$ are cancelled.

Let us introduce the following transformation
\begin{equation}\label{iptorhp}
    \mathbf{Y}(z) = \begin{cases}
            K\mathbf{Y}^u(z)K^{-1}, & \textrm{$z \in \Omega _{\pm}^{\bigtriangledown}$},\\
            K\mathbf{Y}^l(z)K^{-1}, & \textrm{$z \in \Omega _{\pm}^{\bigtriangleup}$},\\
            K\mathbf{P}(z)K^{-1}, & \textrm{otherwise},
            \end{cases}
\end{equation}
where the regions $\Omega _{\pm}^{\bigtriangledown}$ and $\Omega _{\pm}^{\bigtriangleup}$ are depicted in Figure \ref{sigmaY}, and $K$ is the following constant matrix
\begin{equation}\label{K}
    K=\begin{pmatrix}
            1 & 0\\
            0 & -i n \pi
        \end{pmatrix}.
\end{equation}
Then, $\mathbf{Y}(z)$ satisfies the following proposition.
\begin{figure}[h]
    \centering
    \includegraphics[width=0.6\textwidth]{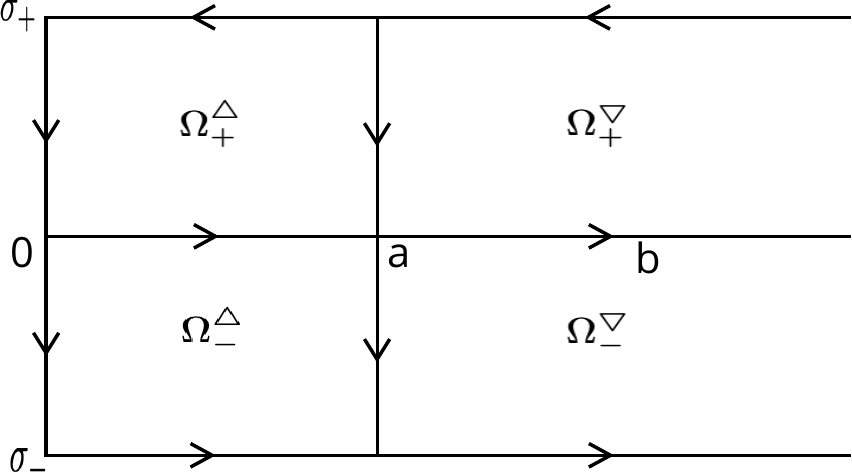}
    \caption{Contour $\Sigma_{\mathbf{Y}}$ and regions $\Omega _{\pm}^{\bigtriangledown}, \Omega _{\pm}^{\bigtriangleup}$, where $\sigma_{\pm} = (0, \pm i \varepsilon) \cup \{(0, a) \pm i \varepsilon \}$.}
    \label{sigmaY}
\end{figure}

\begin{proposition}
    The function $\mathbf{Y}(z)$ defined in \eqref{iptorhp} satisfies a RH problem as follows:
    \begin{itemize}
    \item {\emph{Analyticity}: $\mathbf{Y}(z)$ is analytic for $z \in \mathbb{C} \setminus \Sigma_{\mathbf{Y}}$.}
    \item{\emph{Jump condition}: $\mathbf{Y}_{+}(z)=\mathbf{Y}_{-}(z)J_{\mathbf{Y}}(z)$ for $z \in \Sigma_\textbf{Y}$ with
    \begin{equation} \label{Y-jump}
        J_{\mathbf{Y}}(z)=
        \begin{cases}
            \begin{pmatrix}
                1 & x^{\alpha-\frac{1}{2}}e^{-n c x}\\
                0 & 1
            \end{pmatrix}, & x \in (a, \infty), \\
            \begin{pmatrix}
                1 & 0\\
                -n^2 \pi^2 x^{-\alpha + \frac{1}{2}}e^{n c x} & 1
            \end{pmatrix}, & x \in (0,a), \\
            \begin{pmatrix}
                \frac{\sqrt{z}}{\Pi (z)} & 0\\
                in\pi \frac{\sqrt{z}}{w(z)}e^{\pm i n \pi \sqrt{z}} & \frac{\Pi (z)}{\sqrt{z}}
            \end{pmatrix}, & z \in \sigma_{\pm}, \\
            \begin{pmatrix}
                1 & \frac{w(z)}{in \pi \Pi(z)}e^{\pm i n \pi \sqrt{z}}\\
                0 & 1
            \end{pmatrix}, & z \in (a, \infty) \pm i \varepsilon, \\
            \begin{pmatrix}
                \frac{\Pi (z)}{\sqrt{z}} & \frac{w(z)}{in \pi \sqrt{z}}e^{\pm i n \pi \sqrt{z}}\\
                -in\pi \frac{\sqrt{z}}{w(z)}e^{\pm i n \pi \sqrt{z}} & \mp in \pi e^{\pm i n \pi \sqrt{z}}
            \end{pmatrix}, & z \in (a, a \pm i \varepsilon).
        \end{cases}
    \end{equation}
    }
    \item{\emph{Asymptotics at infinity}:
    \begin{equation}
        \mathbf{Y}(z) = \left(I + \frac{\mathbf{Y}_1}{z}+\frac{\mathbf{Y}_2}{z^2}+O\left(\frac{1}{z^3}\right)\right)
        \begin{pmatrix}
            z^n & 0 \\
            0 & z^{-n}
        \end{pmatrix}, \qquad \textrm{as } z \to \infty.
    \end{equation}}
    \item{\emph{Asymptotics at the origin}: as $z \to 0$,
    \begin{equation} \label{Y-origin}
        \mathbf{Y}(z) =
        \begin{cases}
            \begin{pmatrix}
                O(1) & O(1) \\
                O(1) & O(1)
            \end{pmatrix}, & \textrm{for $\re{z} \le 0$,}\\
            \begin{pmatrix}
                O(|z|^{-\frac{1}{2}}) & O(|z|^{\frac{1}{2}}) \\
                O(|z|^{-\frac{1}{2}}) & O(|z|^{\frac{1}{2}})
            \end{pmatrix}, & \textrm{for $\re{z}> 0$ and $\alpha \le 1$,}\\
            \begin{pmatrix}
                O(|z|^{\frac{1}{2} - \alpha}) & O(|z|^{\frac{1}{2}}) \\
                O(|z|^{\frac{1}{2} - \alpha}) & O(|z|^{\frac{1}{2}})
            \end{pmatrix}, & \textrm{for $\re{z}> 0$ and $\alpha > 1$.}
        \end{cases}
    \end{equation}}
\end{itemize}
\end{proposition}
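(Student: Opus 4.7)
\bigskip\noindent\textbf{Proof plan.} The plan is to verify, in order: analyticity of $\mathbf{Y}(z)$ off $\Sigma_{\mathbf{Y}}$ (which amounts to showing that the transformations \eqref{yu-def}--\eqref{Dl} cancel the lattice poles of $\mathbf{P}$), the jump conditions on each arc of $\Sigma_{\mathbf{Y}}$, the prescribed behavior at infinity, and the local estimates at the origin. The transformation $\mathbf{Y} = K(\cdot)K^{-1}$ with $K$ diagonal preserves triangularity and simply rescales off-diagonal entries, so all the delicate work sits in the $\mathbf{Y}^u$, $\mathbf{Y}^l$ side.

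For the analyticity I would start at a fixed node $x_k = k^2/N^2 \in L_N$. The factor $w(z)/\Pi(z)$ has a simple pole there with residue $(-1)^k w(x_k)$, since $\Pi(x_k)=0$ and $\Pi'(x_k) = (-1)^k$. Using $n=N$ and $\sqrt{x_k}=k/N$ one has $e^{\pm in\pi\sqrt{x_k}} = (-1)^k$, so $\frac{w(z)}{\Pi(z)} e^{\pm in\pi\sqrt{z}}$ has residue $w(x_k)$ at $x_k$. Combined with the residue identity $\Res_{z=x_k}[\mathbf{P}(z)]_{j2} = w(x_k)[\mathbf{P}(x_k)]_{j1}$ from the interpolation problem, a direct matrix multiplication shows that the singular parts of $\mathbf{P}(z) D_\pm^u(z)$ and $\mathbf{P}(z) D_\pm^l(z)$ vanish at every $x_k$. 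Thus $\mathbf{Y}^u$ and $\mathbf{Y}^l$ are holomorphic in the closed upper/lower half-planes away from the real axis, and therefore $\mathbf{Y}$ is holomorphic on $\mathbb{C}\setminus\Sigma_{\mathbf{Y}}$ by \eqref{iptorhp}.

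Next I would compute each jump separately. On $(a,\infty)$, $\mathbf{P}$ is continuous across (its poles having been removed), so the jump of $\mathbf{Y}^u$ reduces to $(D_-^u)^{-1} D_+^u$; using the identity $\Pi(x)=\frac{2\sqrt{x}\sin(n\pi\sqrt{x})}{n\pi}$ this collapses to an upper-triangular matrix with $(1,2)$-entry $-in\pi w(x)/\sqrt{x}$, and conjugation by $K=\mathrm{diag}(1,-in\pi)$ yields exactly $\bigl(\begin{smallmatrix}1 & x^{\alpha-1/2}e^{-ncx}\\0&1\end{smallmatrix}\bigr)$. The parallel computation with $D_\pm^l$ produces the lower-triangular jump on $(0,a)$. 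On $\sigma_\pm$ and $(a,\infty)\pm i\varepsilon$ one side of the contour is $K\mathbf{P}K^{-1}$ while the other is $K\mathbf{Y}^l K^{-1}$ or $K\mathbf{Y}^u K^{-1}$, so the jump equals $K D_\pm^l K^{-1}$ or $K D_\pm^u K^{-1}$ directly, giving the stated matrices after inserting \eqref{K}, \eqref{Du}, \eqref{Dl}. On the two short segments $(a, a\pm i\varepsilon)$, one compares $K\mathbf{Y}^u K^{-1}$ with $K\mathbf{Y}^l K^{-1}$, producing $K(D_\pm^l)^{-1}D_\pm^u K^{-1}$, and the full $2\times 2$ entries in \eqref{Y-jump} drop out after simplification.

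For the asymptotic expansion at infinity, in $\Omega_\pm^\bigtriangledown$ the only nontrivial entry of $D_\pm^u$ is the $(1,2)$ entry $-\frac{w(z)}{\Pi(z)} e^{\pm in\pi\sqrt{z}}$; in the respective half-plane the dominant balance $\Pi(z)\sim \frac{i\sqrt{z}}{n\pi} e^{\mp in\pi\sqrt{z}}$ combined with $w(z)=z^\alpha e^{-ncz}$ gives an exponentially small quantity as $z\to\infty$, and analogously for $D_\pm^l$ in $\Omega_\pm^\bigtriangleup$. Consequently $\mathbf{Y}(z)$ inherits \eqref{Pinfty} with the same diagonal leading term and coefficients $\mathbf{Y}_j = K\mathbf{P}_j K^{-1}$, as required. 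For the local estimates \eqref{Y-origin} at the origin, the case $\re z\le 0$ is benign: the lattice is absent, no $D$-factor intervenes, and $\mathbf{P}$ is locally bounded. For $\re z>0$ one tracks the behavior of $\sqrt{z}/\Pi(z)\sim z^{-1/2}$ and $\Pi(z)/\sqrt{z}=O(1)$ in the columns of $D_\pm^l$, together with the $z^\alpha$-singularity of the weight in its $(2,1)$ entry; this explains the $z^{-1/2}$ behavior of the first column when $\alpha\le 1$ and its worsening to $z^{1/2-\alpha}$ when $\alpha>1$, while the $z^{1/2}$ behavior in the second column comes from the $\Pi(z)/\sqrt{z}$ factor multiplied against the bounded Cauchy transform.

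The main obstacle I foresee is the bookkeeping near $z=a$, where three regions meet and the orientations of $\sigma_\pm$, $(a,\infty)\pm i\varepsilon$ and the two short vertical segments $(a, a\pm i\varepsilon)$ must be consistent so that the triple-junction jumps compose correctly. A second source of care is handling the principal branch of $\sqrt{z}$ across the positive real axis in all the lower-triangular factors, and making sure that the pole-cancellation argument remains valid for the lattice point $x_1 = 1/N^2$ nearest the origin, whose spacing from $0$ shrinks with $N$.
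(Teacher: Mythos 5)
Your proposal is correct and follows essentially the same route as the paper: trace through the definition \eqref{iptorhp}, verify pole cancellation at the lattice nodes via the residue identity of the interpolation problem, compute jump matrices on each arc by conjugating the appropriate $D$-factor (or product of $D$-factors) by $K$, note that $D_\pm^u\to I$ at infinity, and read off the origin behavior from the singular factors $\sqrt{z}/\Pi(z)$, $\sqrt{z}/w(z)$, $\Pi(z)/\sqrt{z}$ in $D_\pm^l$ together with $\mathbf{P}(z)=O(1)$. The paper's own proof is terser — it simply declares the jumps on $\Sigma_{\mathbf{Y}}\setminus\mathbb{R}$ easy to verify, presents the $K(D_\mp)^{-1}D_\pm K^{-1}$ computations only on $(0,a)$ and $(a,\infty)$, and asserts analyticity without spelling out the residue cancellation — so your proposal actually fills in several details (the explicit residue calculation at $x_k$ using $\Pi'(x_k)=(-1)^k$ and $e^{\pm in\pi\sqrt{x_k}}=(-1)^k$, the dominant-balance estimate for $\Pi(z)$ at infinity, the $\alpha\le 1$ versus $\alpha>1$ dichotomy at the origin) that the paper leaves implicit. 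Your flagged concerns about orientation consistency at the triple junction near $z=a$ and about branch choices of $\sqrt{z}$ are legitimate bookkeeping points but do not indicate any gap in the argument.
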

\begin{proof}

It is clear that $\mathbf{Y}(z)$ is analytic for $z \in \mathbb{C} \setminus \Sigma_{\mathbf{Y}}$ from its definition in \eqref{iptorhp}. The jumps on $\Sigma_{\mathbf{Y}} \setminus \mathbb{R}$ are easy to verify.
    For $x \in (0,a)$, we have
    \begin{equation}
        \begin{split}
            J_{\mathbf{Y}}(x) & = \mathbf{Y}_{-}(x)^{-1} \mathbf{Y}_{+}(x)\\
            & = K D_{-}^l(x)^{-1} D_{+}^l(x) K^{-1}.
        \end{split}
    \end{equation}
    Substituting \eqref{Dl}  and \eqref{K}  into the above equation, we obtain
    \begin{equation}
        \begin{split}
            J_{\mathbf{Y}}(x) & = \begin{pmatrix}
            1 & 0\\
            0 & -i n \pi
        \end{pmatrix} \begin{pmatrix}
        \frac{\Pi (x)}{\sqrt{x}} & 0\\
        \frac{\sqrt{x}}{w(x)}e^{- i n \pi \sqrt{x}} &  \frac{\sqrt{x}}{\Pi (x)}
    \end{pmatrix} \begin{pmatrix}
        \frac{\sqrt{x}}{\Pi (x)} & 0\\
        -\frac{\sqrt{x}}{w(x)}e^{i n \pi \sqrt{x}} & \frac{\Pi (x)}{\sqrt{x}}
    \end{pmatrix} \begin{pmatrix}
            1 & 0\\
            0 & -\frac{1}{i n \pi}
        \end{pmatrix}\\
        & = \begin{pmatrix}
                1 & 0\\
                -n^2 \pi^2 x^{-\alpha + \frac{1}{2}}e^{ncx} & 1
            \end{pmatrix}.
        \end{split}
    \end{equation}
    Similarly, for $x \in (a, \infty)$, we have
    \begin{equation}
        J_{\mathbf{Y}}(x) = K D_{-}^u(x)^{-1} D_{+}^u(x) K^{-1}.
    \end{equation}
    Substituting \eqref{Du}  and \eqref{K}  into the above equation, we get
    \begin{equation}
        \begin{split}
            J_{\mathbf{Y}}(x) & = \begin{pmatrix}
            1 & 0\\
            0 & -i n \pi
        \end{pmatrix}\begin{pmatrix}
        1 & \frac{w(x)}{\Pi(x)}e^{- i n \pi \sqrt{x}}\\
        0 & 1
    \end{pmatrix} \begin{pmatrix}
        1 & -\frac{w(x)}{\Pi(x)}e^{ i n \pi \sqrt{x}}\\
        0 & 1
    \end{pmatrix} \begin{pmatrix}
            1 & 0\\
            0 & -\frac{1}{i n \pi}
        \end{pmatrix}\\
        & = \begin{pmatrix}
                1 & x^{\alpha-\frac{1}{2}}e^{-ncx}\\
                0 & 1
            \end{pmatrix}.
        \end{split}
    \end{equation}
    Note that, since $D_{\pm}^u(z)$ tend to the identity matrix as $z \to \infty$, $\mathbf{Y}(z)$ has the same asymptotic behaviour as $\mathbf{P}(z)$ when $z \to \infty$.

    Next, we consider the behaviour of $\mathbf{Y}(z)$ near the origin. From \eqref{P-sol}, we have
    \begin{equation}
        \mathbf{P}(z) = \begin{pmatrix}
            O(1) & O(1) \\
            O(1) & O(1)
        \end{pmatrix} \quad \textrm{as $z \to 0$}.
    \end{equation}
    By the definitions of  $D_{\pm}^l(z)$  in \eqref{Dl}, we get
    \begin{equation}
        D_{\pm}^l(z) = \begin{pmatrix}
                O(|z|^{-\frac{1}{2}}) & 0 \\
                O(|z|^{\frac{1}{2} - \alpha}) & O(|z|^{\frac{1}{2}})
            \end{pmatrix} \quad \textrm{as $z \to 0$}.
    \end{equation}
    Then, a straightforward calculation gives us the asymptotic behaviour of $\mathbf{Y}(z)$ near the origin in \eqref{Y-origin}.

    This finishes the proof of our proposition.
\end{proof}

\begin{remark}\label{changeweight}
The weight function for orthogonal polynomials is embedded into the off-diagonal entries of $J_{\mathbf{Y}}(z)$ for $x \in \mathbb{R}$ in \eqref{Y-jump}. When the discrete interpolation problem is transformed to a continuous RH problem, an interesting observation is that the original weight function $x^{\alpha}e^{-ncx}$ in \eqref{weight} is modified to $x^{\alpha-\frac{1}{2}}e^{-ncx}$, where the exponent of $x$ decreases from $\alpha$ to $\alpha - \frac{1}{2}$.
\end{remark}

In the subsequent sections, we apply the Deift-Zhou nonlinear steepest descent method for RH problems to study the asymptotics of $\mathbf{Y}(z)$ as $n \to \infty$.

\subsection{First transformation}
\label{sec:dl:rhp:1st}

In the first transformation, we normalize $\mathbf{Y}(z)$ at infinity with the aid of the $g$-function defined in \eqref{g}. Before we introduce the first transformation, let us first study some properties of $g(z)$ and prove Proposition \ref{prop:EM} about the equilibrium measure $\mu_0$.

\begin{proposition}
\label{prop-g-func}
The $g$-function defined in \eqref{g} satisfies the following properties:
\begin{itemize}
\item[(1)] From the variational conditions \eqref{variational}, we have
\begin{equation}\label{2g-V-l}
        g_{+}(x) + g_{-}(x) \begin{cases}
        > cx + l, & \textrm{for $x \in (0, a)$},\\
        = cx + l, & \textrm{for $x \in [a, b]$},\\
        < cx + l, & \textrm{for $x \in (b, \infty)$}.
        \end{cases}
    \end{equation}
    \item[(2)] For $x \in \mathbb{R}$, we have
    \begin{equation}\label{gproperty}
        g_{+}(x) - g_{-}(x) = \begin{cases}
        2 \pi i, & \textrm{for $x \in (-\infty, 0),$}\\
        2 \pi i (1 - \sqrt{x}), & \textrm{for $x \in (0, a)$,}\\
        2 \pi i \int_x^{b} \rho (s) ds, & \textrm{for $x \in (a, b)$,}\\
        0, & \textrm{for $x \in (b, \infty)$.}
        \end{cases}
    \end{equation}
    \item[(3)] As $z \to \infty$, $g(z) =\D \log z + O\left(\frac{1}{z}\right).$
\end{itemize}
\end{proposition}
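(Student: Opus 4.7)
The plan is to handle the three assertions in the order (3), (2), (1). Parts (3) and (2) are direct consequences of the integral representation \eqref{g} together with the branch structure of the principal logarithm, while (1) is a rewriting of the Euler-Lagrange conditions that characterize $\mu_0$. For part (3), I would expand
\[
\log(z - s) = \log z + \log(1 - s/z) = \log z - \frac{s}{z} + O(z^{-2})
\]
uniformly for $s \in [0, b]$ as $z \to \infty$ off the negative real axis, and integrate against $\rho(s)\,ds$. Since $\mu_0$ is a probability measure, $\int_0^b \rho(s)\,ds = 1$, yielding $g(z) = \log z + O(1/z)$.

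For part (2), use that the principal branch satisfies $\log_+(x - s) - \log_-(x - s) = 2\pi i \cdot \mathbf{1}_{\{x < s\}}$, because the branch cut of $\log(z - s)$ is $(-\infty, s]$. Integrating this identity against $\rho(s)\,ds$ and splitting the support of $\rho$ around $x$ immediately gives three of the four cases: $2\pi i$ for $x < 0$; $0$ for $x > b$; and $2\pi i \int_x^b \rho(s)\,ds$ for $x \in (a, b)$. For $x \in (0, a)$ one splits further as
\[
\int_x^b \rho(s)\,ds = \int_x^a \frac{ds}{2\sqrt{s}} + \int_a^b \rho(s)\,ds = (\sqrt{a} - \sqrt{x}) + (1 - \sqrt{a}) = 1 - \sqrt{x},
\]
where the evaluation $\int_a^b \rho(s)\,ds = 1 - \sqrt{a}$ is exactly what condition \eqref{ab2} encodes once \eqref{em:rho2} is substituted, and is equivalent to the total-mass normalization $\int_0^b \rho = 1$ combined with $\int_0^a \tfrac{ds}{2\sqrt{s}} = \sqrt{a}$.

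For part (1), add the boundary values to obtain $g_+(x) + g_-(x) = 2\int_0^b \log|x - s|\,\rho(s)\,ds$, since the $\pm i\pi$ imaginary contributions from above and below the cut cancel on every $s \in (\max(x, 0), b)$. The Euler-Lagrange variational conditions for the constrained equilibrium problem \eqref{em-def}, applied with $V(x) = cx$ and the equilibrium measure $\mu_0$ from Proposition \ref{prop:EM}, assert that this real-valued function equals $cx + l$ on the band $[a, b]$, is strictly smaller on the void $(b, \infty)$, and strictly larger on the saturated region $(0, a)$. These are exactly the three cases of \eqref{2g-V-l}.

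The main obstacle, such as it is, is bookkeeping rather than conceptual: one must verify that $\int_a^b \rho(s)\,ds = 1 - \sqrt{a}$ is the content of \eqref{ab2}, and that \eqref{ab1} matches the soft-edge vanishing condition $\rho(b) = 0$. I would therefore first finish the deferred proof of Proposition \ref{prop:EM} (deriving \eqref{ab1}-\eqref{ab2} as the soft-edge and total-mass conditions) and only then invoke them inside (2). The strictness of the inequalities in (1) off the band follows from the fact that $\rho > 0$ inside $(a, b)$ and $\rho < \sigma'$ there, together with the standard strict-complementarity features of the constrained variational problem.
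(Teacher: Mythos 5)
Your proposal is correct and takes essentially the same approach as the paper: compute $g_\pm$ via the branch structure of the principal log, use $\int_0^b\rho=1$ for parts (2) and (3), and invoke the Euler--Lagrange/variational conditions for part (1). The one small detour is your evaluation of $\int_x^b\rho\,ds$ for $x\in(0,a)$ by splitting at $a$ and appealing to \eqref{ab2}; the paper just writes $\int_x^b\rho\,ds = \int_0^b\rho\,ds - \int_0^x\rho\,ds = 1 - \sqrt{x}$, using only total mass and $\rho=\tfrac{1}{2\sqrt{s}}$ on the saturated region, which avoids any reliance on the endpoint equations \eqref{ab1}--\eqref{ab2}.
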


\begin{proof}

Recalling the definition of $g(z)$ in \eqref{g}, where the principal branch is chosen for $\log(x-s)$ for $x \in \mathbb{R}$. Then,  we have
    \begin{equation} \label{g+_-1}
        g_{\pm}(x)  = \int_0^{b} (\log|x-s| \pm \pi i) \rho (s) ds
        =  \int_0^{b} \log|x-s| \rho (s) ds \pm \pi i, \quad x \in (-\infty, 0).
    \end{equation}
Similarly, we get
    \begin{equation} \label{g+_-2}
            g_{\pm}(x) = \int_0^{b} \log|x-s| \rho (s) ds \pm \pi i \int _x^{b} \rho (s) ds, \qquad x \in (0, b).
    \end{equation}
It then follows from the above formulas that
\begin{equation}
g_+(x) + g_-(x) = 2 \int_0^{b} \log|x-s| \rho (s) ds, \qquad x \in (0,\infty).
\end{equation}
Recalling the following variational conditions when $c > c_{cr}$:
\begin{equation}\label{variational}
    2 \int \log |x-y| \rho (y) dy - cx - l \begin{cases}
        > 0, & \textrm{for $x \in (0, a)$},\\
        = 0, & \textrm{for $x \in [a, b]$},\\
        < 0, & \textrm{for $x \in (b, \infty)$},
    \end{cases}
\end{equation}
we obtain \eqref{2g-V-l} from the above two formulas.
Note that we are in the ``saturated region-band-void" situation, where the upper constraint \eqref{upper-cons} is achieved on $(0,a)$. Then, when  $0<x<a$, the last integral in formula \eqref{g+_-2} can be rewritten as
\begin{equation}
        \int _x^{b} \rho (s) ds = \int_0^{b} \rho (s) ds - \int_0^{x} \rho (s) ds = 1- \sqrt{x}, \qquad x \in (0 ,a).
    \end{equation}
This gives us
    \begin{equation} \label{g+_-3}
        g_{\pm}(x)  = \int_0^{b} \log|x-s| \rho (s) ds \pm \pi i (1- \sqrt{x}), \qquad x \in (0 ,a).
    \end{equation}
Then, the relation \eqref{gproperty} is a straightforward consequence of \eqref{g+_-1}, \eqref{g+_-2}, \eqref{g+_-3}, and the fact that $g(x)$ is analytic for $x>b$. Since $\int_0^b\rho(x)=1$, the large-$z$ behaviour of $g(z)$ follows immediately.

This finishes the proof of our proposition.
%    In addition, we have
%    \begin{equation}\label{g+-}
%        2g_{\pm}(x) = cx + l \pm 2 \pi i \int_x^{b} \rho (s) ds \quad  \textrm{for $x \in (a, b)$.}
%    \end{equation}
\end{proof}

We are now in a position to complete the proof of Proposition \ref{prop:EM} in Section \ref{sec:dl:em}.

\noindent\emph{Proof of Proposition \ref{prop:EM}.}
When $c < c_{cr}$, the equilibrium measure is the same as the continuous case. The density function \eqref{em:rho0} is well-known; for example, see \cite{qiu2008}. Here, we focus on the case when $c>c_{cr}$.

Recall the definition of $g(z)$ in \eqref{g}, where the derivative is given by
\begin{equation}
    g'(z) = \int_0^{b} \frac{\rho (s)}{z-s} dx, \qquad z \in \mathbb{C} \setminus [0, b].
\end{equation}
To get information of $g'_{\pm}(x)$ when $x \in (0, b)$, we first apply the Plemelj-Sokhotsky formula and obtain
\begin{equation}\label{em:rho}
    g'_{+} (x) - g'_{-} (x) = -2 \pi i \rho (x), \qquad \textrm{for $x \in (0, b)$}.
\end{equation}
Moreover, differentiating \eqref{2g-V-l} with respect to $x$, we get
\begin{equation}
    g'_{+} (x) + g'_{-} (x) = c, \qquad \textrm{for $x \in (a, b)$}.
\end{equation}
Then, the expression of $\rho(x)$ can be determined from the above two formulas by considering an associated scaler RH problem.

For this purpose, let us introduce
\begin{equation}\label{G-def}
    G(z) = \sqrt{\frac{z-b}{z-a}}, \qquad z  \in \mathbb{C} \setminus [a, b],
\end{equation}
where the branch is chosen such that $G(z) \sim 1$ as $z \to \infty$. For $x \in (0, b)$, it follows from the above definition that
\begin{equation}
G(x) = \sqrt{\frac{b-x}{a-x}}, \quad x \in (0, a); \qquad G_{\pm}(x) = \pm i \sqrt{\frac{b-x}{x-a}}, \qquad x \in (a, b).
\end{equation}
Then, it is easily verified that the function
\begin{equation}\label{em:omega}
    \omega (z) = \frac{g'(z)}{G(z)}, \qquad z \in \mathbb{C} \setminus [0, b],
\end{equation}
satisfies a scalar RH problem as follows.
\begin{itemize}
    \item {\emph{Analyticity}: $\omega (z)$ is analytic for $z \in \mathbb{C} \setminus [0, b]$.}
    \item {\emph{Jump condition}:
    \begin{equation}
        \omega_+(x) - \omega_-(x) =
        \begin{cases}
            -\pi i \sqrt{\frac{a-x}{x(b-x)}}, & x \in (0, a),\\
            -c i \sqrt{\frac{x-a}{b-x}}, & x \in (a, b).
        \end{cases}
    \end{equation}}
    \item {\emph{Asymptotics at infinity}:
    \begin{equation}
        \omega (z) = \frac{1}{z} + O\left(\frac{1}{z^2}\right), \qquad \textrm{as } z \to \infty.
    \end{equation}}
\end{itemize}
With the Plemelj-Sokhotsky formula, the solution to the above RH problem is given by
\begin{eqnarray}
\omega (z) &=& \frac{1}{2 \pi i} \int_0^{a} \left( -\pi i\sqrt{\frac{a-s}{s(b-s)}} \right) \frac{ds}{s-z} + \frac{1}{2 \pi i} \int_{a}^{b} \left(-c i \sqrt{\frac{s-a}{b-s}}\right) \frac{ds}{s-z} \nonumber \\
& = & \frac{c}{2} \sqrt{\frac{z-a}{z-b}} - \frac{c}{2} -\frac{1}{2} \int_0^{a} \sqrt{\frac{a-s}{s(b-s)}} \frac{ds}{s-z}. \label{em:omega2}
\end{eqnarray}
To get the expression of $\rho(x)$, we obtain from  \eqref{em:rho} and \eqref{em:omega} that
\begin{equation}
    \rho (x) = -\frac{1}{2 \pi i} \Big(\omega_+ (x) G_+(x) - \omega_- (x) G_-(x)  \Big),
\end{equation}
Substituting \eqref{G-def} and \eqref{em:omega2} into the above equation, we get the density function \eqref{em:rho2}.

To determine the endpoints $a$ and $b$, we consider the asymptotics of $\omega (z)$ as $z \to \infty$. From \eqref{em:omega2}, we have
\begin{equation}
    \omega (z) = \frac{1}{z} \left(\frac{c(b-a)}{4} + \frac{1}{2} \int_0^{a} \sqrt{\frac{a-s}{s(b-s)}} ds \right) + O\left(\frac{1}{z^2}\right), \qquad z \to \infty.
\end{equation}
Then, the relation \eqref{ab1} follows from the above formula and the  requirement $\omega (z)  \sim \frac{1}{z}$. The second relation \eqref{ab2} is a direct consequence of the normalization condition $\int_0^{b} \rho(s) ds = 1$.

This finishes the proof of Proposition \ref{prop:EM}.
\hfill \qed

\medskip

Now, let us introduce the first transformation as follows:
\begin{equation}\label{ytot}
    \mathbf{T}(z)=e^{\frac{nl}{2}\sigma_3} \mathbf{Y}(z) e^{-n(g(z)-\frac{l}{2})\sigma_3},
\end{equation}
where $l$ is the Lagrange multiplier given in \eqref{l-def} and $\sigma_3=\begin{pmatrix}
1 & 0\\
0 & -1
\end{pmatrix}$ is the third Pauli matrix. It is easy to verify that $\mathbf{T}(z)$ satisfies the following RH problem:
\begin{itemize}
    \item {\emph{Analyticity}: $\mathbf{T}(z)$ is analytic for $z \in \mathbb{C} \setminus \Sigma_{\mathbf{Y}}$.}
    \item{\emph{Jump condition}: $\mathbf{T}_{+}(z)=\mathbf{T}_{-}(z)J_{\mathbf{T}}(z)$ for $z \in \Sigma_{\mathbf{Y}}$,
    \begin{equation}
        J_{\mathbf{T}}(z)=
        \begin{cases}
            \begin{pmatrix}
                e^{n(g_{-}(x) - g_{+}(x))} & x^{\alpha-\frac{1}{2}}\\
                0 & e^{n(g_{+}(x) - g_{-}(x))}
            \end{pmatrix}, & x \in (a, b), \\
            \begin{pmatrix}
                1 & x^{\alpha-\frac{1}{2}}e^{n(g_{+}(x) + g_{-}(x) - cx -l)}\\
                0 & 1
            \end{pmatrix}, & x \in (b, \infty), \\
            \begin{pmatrix}
                e^{n(g_{-}(x) - g_{+}(x))} & 0\\
                -n^2 \pi^2 x^{-\alpha + \frac{1}{2}}e^{-n(g_{+}(x) + g_{-}(x) - cx -l)} & e^{n(g_{+}(x) - g_{-}(x))}
            \end{pmatrix}, & x \in (0,a), \\
            \begin{pmatrix}
                \frac{n\pi}{2 \sin{(n \pi \sqrt{z})}} & 0\\
                i n \pi z^{-\alpha + \frac{1}{2}} e^{-n(2g(z) - cz -l) \pm i n \pi \sqrt{z}} & \frac{2 \sin{(n \pi \sqrt{z})}}{n\pi}
            \end{pmatrix}, & z \in \sigma_{\pm}, \\
            \begin{pmatrix}
                1 & \pm z^{\alpha-\frac{1}{2}} \frac{e^{n(2g(z) - cz -l)}}{1-e^{\mp 2in \pi \sqrt{z}}}\\
                0 & 1
            \end{pmatrix}, & z \in (a, \infty) \pm i \varepsilon,\\
            \begin{pmatrix}
                \frac{2 \sin{(n \pi \sqrt{z})}}{n\pi} & \frac{z^{\alpha - \frac{1}{2}}}{in\pi}  e^{n(2g(z) - cz -l) \pm i n \pi \sqrt{z}}\\
                -i n \pi z^{-\alpha + \frac{1}{2}} e^{-n(2g(z) - cz -l) \pm i n \pi \sqrt{z}} & \mp i n \pi e^{\pm i n \pi \sqrt{z}}
            \end{pmatrix}, & z \in (a, a \pm i \varepsilon).
        \end{cases}
    \end{equation}
    }
    \item{\emph{Asymptotics at infinity}:
    \begin{equation}
        \mathbf{T}(z) = I + \frac{\mathbf{T}_1}{z}+\frac{\mathbf{T}_2}{z^2}+O\left(\frac{1}{z^3}\right), \qquad \textrm{as } z \to \infty.
    \end{equation}}
    \item{\emph{Asymptotics at the origin}: $\mathbf{T}(z)$ satisfies the same behaviour near $0$ as $\mathbf{Y}(z)$.}
\end{itemize}

\subsection{Second transformation}
\label{sec:dl:rhp:2nd}

With the properties of $g(z)$ given in Proposition \ref{prop-g-func}, one can see that the diagonal entries of the jump matrix $J_{\mathbf{T}}(z)$  are highly oscillatory on $(0,b)$ for large $n$. To remove the oscillations, we will apply a contour deformation in the second transformation.  For $x \in (a,b)$, based on the following factorization
\begin{align*}
    J_ {\mathbf{T}}(x) &= \begin{pmatrix}
                e^{n(g_{-}(x) - g_{ +}(x))} & x^{\alpha-\frac{1}{2}}\\
                0 & e^{n(g_{+}(x) - g_{-}(x))}
            \end{pmatrix} \\
            &= \begin{pmatrix}
            1 & 0\\
            x^{-\alpha + \frac{1}{2}}e^{-n (2 g_{ -}(x) -cx -l)} & 1
            \end{pmatrix}
            \begin{pmatrix}
            0 & x^{\alpha - \frac{1}{2}}\\
            -x^{-\alpha + \frac{1}{2}} & 0
            \end{pmatrix}
            \begin{pmatrix}
            1 & 0\\
            x^{-\alpha + \frac{1}{2}}e^{-n (2 g_{ +}(x) -cx -l)} & 1
            \end{pmatrix},
\end{align*}
the well-known ``opening of the lens" will be conducted. Regarding the left part $(0,a)$, the jump $J_{\mathbf{T}}(x)$ is exponentially close to the diagonal matrix $e^{2 i n \pi \sqrt{x} \sigma_3}$. Based on these observations, we introduce the second transformation as follows:
\begin{equation}\label{ttos}
    \mathbf{S}(z) = \begin{cases}
            \mathbf{T}(z)B(z)^{-1}, & \textrm{$z \in \widetilde{\Omega} _+^{\bigtriangledown}$,}\\
            \mathbf{T}(z)B(z), & \textrm{$z \in \widetilde{\Omega} _-^{\bigtriangledown}$,}\\
            \mathbf{T}(z)A_{\pm}(z), & \textrm{$z \in \Omega _{\pm}^{\bigtriangleup}$,}\\
            \mathbf{T}(z), & \textrm{otherwise,}
            \end{cases}
\end{equation}
where the regions $\widetilde{\Omega} _{\pm}^{\bigtriangledown}$ and $\Omega _{\pm}^{\bigtriangleup}$ are depicted in Figure \ref{sigmaS}. In the above formula, the function $B(z)$ and $A_{\pm}(z)$ are defined as
\begin{equation}\label{J-def}
    B(z) = \begin{pmatrix}
    1 & 0\\
    z^{-\alpha + \frac{1}{2}}e^{-n (2 g(z) -cz -l)} & 1
    \end{pmatrix}
\end{equation}
and
\begin{equation}\label{A-def}
    A_{\pm}(z)=\begin{pmatrix}
    \mp \frac{e^{\mp i n \pi \sqrt{z}}}{i n \pi} & 0\\
    0 & \mp i n \pi e^{\pm i n \pi \sqrt{z}}
    \end{pmatrix}.
\end{equation}
\begin{figure}[h]
    \centering
    \includegraphics[width=0.6\textwidth]{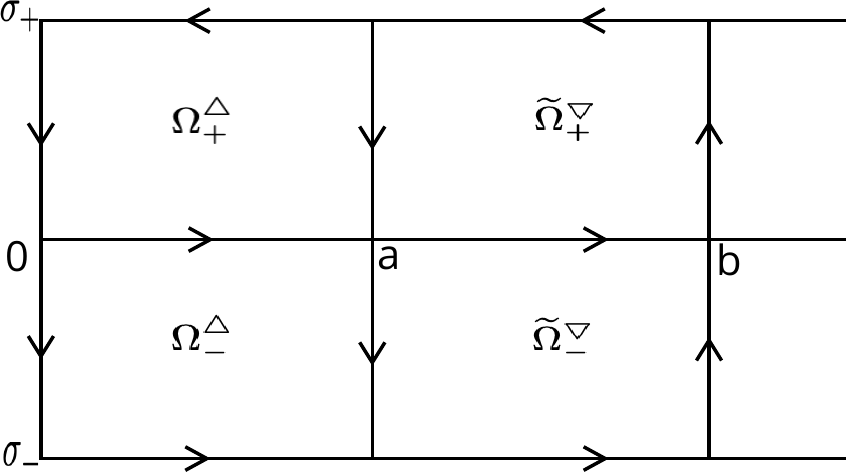}
    \caption{Contour $\Sigma_{\mathbf{S}}$ and regions $\widetilde{\Omega} _{\pm}^{\bigtriangledown}, \Omega _{\pm}^{\bigtriangleup}$.}
    \label{sigmaS}
\end{figure}
%The second transformation contains two parts. The first one is , which is based on the following factorization of $J_ {\mathbf{T}}(x)$ on $(a, b)$. We transform the oscillatory entries of $J_{\mathbf{T}}(z)$ into exponentially decaying off-diagonal entries. The second part is based on the fact that the jumps on $\sigma_{\pm}$ and $(0, a)$ behave as $e^{\pm i n \pi \sqrt{z}}$ for large $n$.

By a straightforward computation, one can see that  $\mathbf{S}(z)$ satisfies the following RH problem:
\begin{itemize}
    \item {\emph{Analyticity}: $\mathbf{S}(z)$ is analytic for $z \in \mathbb{C} \setminus \Sigma_{\mathbf{S}}$.}
    \item{\emph{Jump condition}: $\mathbf{S}_{+}(z)=\mathbf{S}_{-}(z)J_{\mathbf{S}}(z)$ for $z \in \Sigma_{\mathbf{S}}$,
    \begin{equation}
        J_{\mathbf{S}}(z)=
        \begin{cases}
            \begin{pmatrix}
                0 & x^{\alpha - \frac{1}{2}}\\
                -x^{-\alpha + \frac{1}{2}} & 0
            \end{pmatrix}, & x \in (a, b), \\
            \begin{pmatrix}
                1 & x^{\alpha-\frac{1}{2}}e^{n(g_{+}(x) + g_{-}(x) - cx -l)}\\
                0 & 1
            \end{pmatrix}, & x \in (b, \infty), \\
            \begin{pmatrix}
                -1 & 0\\
                - x^{-\alpha + \frac{1}{2}}e^{-n(g_{+}(x) + g_{-}(x) - cx -l)} & -1
            \end{pmatrix}, & x \in (0,a), \\
            \begin{pmatrix}
                \frac{1}{1- e ^{\pm 2 i n \pi \sqrt{z}}} & 0\\
                \mp z^{-\alpha + \frac{1}{2}}e^{-n (2 g(z) -cz -l)} & 1- e ^{\pm 2 i n \pi \sqrt{z}}
            \end{pmatrix}, & z \in \sigma_{\pm}, \\
            \begin{pmatrix}
                \frac{1}{1- e ^{\pm 2 i n \pi \sqrt{z}}} & \pm z^{\alpha - \frac{1}{2}} \frac{e^{n (2 g(z) -cz -l)}}{1- e ^{\mp 2 i n \pi \sqrt{z}}} \\
                \mp z^{-\alpha + \frac{1}{2}}e^{-n (2 g(z) -cz -l)} & 1
            \end{pmatrix}, & z \in (a, b) \pm i \varepsilon, \\
            \begin{pmatrix}
                1 & \pm z^{\alpha-\frac{1}{2}} \frac{e^{n(2g(z) - cz -l)}}{1-e^{\mp 2 i n \pi \sqrt{z}}}\\
                0 & 1
            \end{pmatrix}, & z \in (b, \infty) \pm i \varepsilon, \\
            \begin{pmatrix}
                1 & 0\\
                \mp z^{-\alpha + \frac{1}{2}}e^{-n (2 g(z) -cz -l)} & 1
            \end{pmatrix}, & z \in (b, b \pm i \varepsilon), \\
            \begin{pmatrix}
                1 & \mp z^{\alpha - \frac{1}{2}} e^{n (2 g(z) -cz -l)) \pm i n \pi \sqrt{z}}\\
                0 & 1
            \end{pmatrix}, & z \in (a, a \pm i \varepsilon).
        \end{cases}
    \end{equation}
    }
    \item{\emph{Asymptotics at infinity}:
    \begin{equation}
        \mathbf{S}(z) = I + \frac{\mathbf{S}_1}{z}+\frac{\mathbf{S}_2}{z^2}+O\left(\frac{1}{z^3}\right), \qquad \textrm{as } z \to \infty.
    \end{equation}}
    \item{\emph{Asymptotics at the origin}: $\mathbf{S}(z)$ satisfies the same behaviour near 0 as $\mathbf{Y}(z)$.}
\end{itemize}

\subsection{Global parametrix}
\label{sec:dl:gp}

When $n$ is large, due to the properties of the $g$-function in Proposition \ref{prop-g-func}, the jump matrix $J_{\mathbf{S}}(z)$ tends to the identity matrix for $z$ bounded away from the interval $(0, b)$. Therefore, we consider the following global parametrix.
\begin{itemize}
    \item {\emph{Analyticity}: $\mathbf{N}(z)$ is analytic for $z \in \mathbb{C}\setminus [0,b]$}.
    \item {\emph{Jump condition}: \begin{equation} \label{N-jump}
            \mathbf{N}_{+}(x)=\mathbf{N}_{-}(x)\begin{cases}
            \begin{pmatrix}
                0 & x^{\alpha - \frac{1}{2}}\\
                -x^{-\alpha + \frac{1}{2}} & 0
            \end{pmatrix}, & x \in (a, b) , \\
            \begin{pmatrix}
                -1 & 0\\
                0 & -1
            \end{pmatrix}, & x \in (0,a).
    \end{cases}
    \end{equation}
    }
    \item {\emph{Asymptotics at infinity}:
    \begin{equation} \label{Mforzlarge}
        \mathbf{N}(z) = I + \frac{\mathbf{N}_1}{z}+\frac{\mathbf{N}_2}{z^2}+O\left(\frac{1}{z^3}\right) , \qquad \textrm{as } z \to \infty.
    \end{equation}}
\end{itemize}

The solution to the above RH problem is given explicitly as
\begin{equation}\label{global}
    \mathbf{N}(z)=D_{\infty}^{\sigma_3} \begin{pmatrix}
    \frac{\gamma(z)+\gamma(z)^{-1}}{2}  & \frac{\gamma(z)-\gamma(z)^{-1}}{-2i} \\
    \frac{\gamma(z)-\gamma(z)^{-1}}{2i}  & \frac{\gamma(z)+\gamma(z)^{-1}}{2}
    \end{pmatrix}D(z)^{-\sigma_3},
\end{equation}
where $D(z)$ is the Szeg\H{o} function given in \eqref{szego}, the function $
\gamma(z)$ is defined in \eqref{gamma-def}, and $D_{\infty}$ is the constant in \eqref{Dinfty-def}. Denote
\begin{equation}\label{scalar}
     \widetilde{\mathbf{N}}(z)=  \begin{pmatrix}
    \frac{\gamma(z)+\gamma(z)^{-1}}{2}  & \frac{\gamma(z)-\gamma(z)^{-1}}{-2i} \\
    \frac{\gamma(z)-\gamma(z)^{-1}}{2i}  & \frac{\gamma(z)+\gamma(z)^{-1}}{2}
    \end{pmatrix},
\end{equation}
it is easy to check that $\widetilde{\mathbf{N}}(z)$ satisfies a RH problem as follows:
\begin{itemize}
    \item {\emph{Analyticity}: $\widetilde{\mathbf{N}}(z)$ is analytic for $z \in \mathbb{C}\setminus [0,b]$}.
    \item {\emph{Jump condition}:
    \begin{equation}
            \widetilde{\mathbf{N}}_+(z)=\widetilde{\mathbf{N}}_-(z)\begin{cases}
            \begin{pmatrix}
                0 & 1\\
                -1 & 0
            \end{pmatrix}, & x \in (a, b) ,\\
            \begin{pmatrix}
                -1 & 0\\
                0 & -1
            \end{pmatrix}, & x \in (0,a).
    \end{cases}
    \end{equation}
    }
    \item {\emph{Asymptotics at infinity}: $
        \widetilde{\mathbf{N}}(z) = I +O\left(\frac{1}{z}\right)$ as $z \to \infty$.
    }
\end{itemize}
Using the relation $D_+(x)D_-(x)=x^{\alpha - \frac{1}{2}}$ for $x \in (a, b)$, one immediately sees that $ \mathbf{N}(z)$ defined in \eqref{global} solves the RH problem for $ \mathbf{N}.$

%We construct the solution of the RH problem for $\mathbf{N}(z)$ in terms of the Szeg$\ddot{o}$ function $D(z)$ associated with $x^{\alpha - \frac{1}{2}}$, which satisfies the following scalar RH problem:
%\begin{itemize}
%    \item {$D(z)$ is analytic and non-zero in $\mathbb{C} \setminus [a, b]$.}
%    \item {}
%    \item {$\lim\limits_{z \to \infty}D(z)=D_{\infty} \in (0, \infty)$.}
%\end{itemize}
%
%Define $\widetilde{\mathbf{N}}(z)=D_{\infty}^{-\sigma_3} \mathbf{N}(z) D(z)^{\sigma_3}$, then $\widetilde{\mathbf{N}}(z)$ satisfies:

\begin{remark}
    Obviously, $\det \mathbf{N}(z) \equiv 1$.  In addition,  we have the following behaviour near the endpoints:
        \begin{equation}\label{Nbehaviour}
        \mathbf{N}(z) = \begin{cases}
            \begin{pmatrix}
                O(|z|^{-\frac{1}{2}}) & O(|z|^{-\frac{1}{2}}) \\
                O(|z|^{-\frac{1}{2}}) & O(|z|^{-\frac{1}{2}})
            \end{pmatrix}, & \textrm{as $z \to 0$,}\\
            \begin{pmatrix}
                O(|z-a|^{-\frac{1}{4}}) & O(|z-a|^{-\frac{1}{4}}) \\
                O(|z-a|^{-\frac{1}{4}}) & O(|z-a|^{-\frac{1}{4}})
            \end{pmatrix}, & \textrm{as $z \to a$,}\\
            \begin{pmatrix}
                O(|z-b|^{-\frac{1}{4}}) & O(|z-b|^{-\frac{1}{4}}) \\
                O(|z-b|^{-\frac{1}{4}}) & O(|z-b|^{-\frac{1}{4}})
            \end{pmatrix}, & \textrm{as $z \to b$.}
        \end{cases}
    \end{equation}
\end{remark}

For later use, let us derive the asymptotics of $\mathbf{N}(z)$ as $z \to \infty$ in more details. From  the definitions of $D(z)$ and  $\gamma(z)$ in \eqref{szego} and \eqref{scalar}, respectively,  straightforward computations give us
\begin{equation}
    D(z) = D_{\infty} \left(1 + \frac{(2\alpha -1)(a+b-2\sqrt{a b})}{8z} + \frac{\Delta}{z^2} + O\left(\frac{1}{z^3}\right)\right), \qquad z \to \infty,
\end{equation}
with $\Delta = - \frac{(2\alpha-1)(-5a^2 +2a b -5 b^2 +4 a \sqrt{a b} +4 b \sqrt{a b} -2\alpha a^2 -12\alpha a b -2\alpha b^2 + 8\alpha a \sqrt{a b} + 8\alpha b\sqrt{a b})}{128}$ and
\begin{equation}
    \gamma(z) = 1 + \frac{a+b}{4z} + \frac{5 a^2 + 2 a b + 5b^2}{32z^2} + O\left(\frac{1}{z^3}\right), \qquad z \to \infty.
\end{equation}
Substituting the above approximations into \eqref{global},  we get the large-$z$ asymptotics \eqref{Mforzlarge}
%\begin{equation}
%    \mathbf{N}(z) = I + \frac{\mathbf{N}_1}{z} + \frac{\mathbf{N}_2}{z^2} + O\left(\frac{1}{z^3}\right)
%\end{equation}
with the coefficients given by
\begin{equation}\label{N1-def}
    \mathbf{N}_1 = D_{\infty}^{\sigma_3} \begin{pmatrix}
        -\frac{(2\alpha-1)(\sqrt{b}-\sqrt{a})^2}{8} & \frac{i (a+b)}{4}\\
        -\frac{i (a+b)}{4} & \frac{(2\alpha-1)(\sqrt{b}-\sqrt{a})^2}{8}
    \end{pmatrix} D_{\infty}^{-\sigma_3}
\end{equation}
and
\begin{equation}\label{N2-def}
    \mathbf{N}_2 = D_{\infty}^{\sigma_3} \begin{pmatrix}
        * & \frac{i (2\alpha-1)(\sqrt{b}-\sqrt{a})^2(a+b)+4(a^2+b^2)}{32}\\
        \frac{i (2\alpha-1)(\sqrt{b}-\sqrt{a})^2(a+b)-4(a^2+b^2)}{32} & *
    \end{pmatrix} D_{\infty}^{-\sigma_3},
\end{equation}
where * stands for certain unimportant entries.

\subsection{Local parametrix near the origin}
\label{sec:dl:lp:0}

Since jump matrices for $\mathbf{S}(z)$ and $\mathbf{N}(z)$ are not uniformly close to each other when $z$ is close to the endpoints 0, $a$ and $b$, local  parametrices  will be constructed  near these endpoints to approximate $\mathbf{S}$ for large $n$.

We start from the local parametrix near the origin. Let $D(0, \varepsilon):=  \{ z: |z| <  \varepsilon\}  $  be a small disk centered at the origin. We seek a $2 \times 2$ matrix-valued function $\mathbf{P}^{(0)}(z)$ satisfying a RH problem as follows:
\begin{itemize}
    \item {\emph{Analyticity}: $\mathbf{P}^{(0)}(z)$ is analytic for $z \in D(0, \varepsilon) \setminus \Sigma_{\mathbf{S}}$.}
    \item{\emph{Jump condition}: $\mathbf{P}^{(0)}_{+}(z)=\mathbf{P}^{(0)}_{-}(z)J_{\mathbf{P}^{(0)}}(z)$ for $z \in D(0, \varepsilon) \cap \Sigma_{\mathbf{S}}$,
    \begin{equation}
        J_{\mathbf{P}^{(0)}}(z)=
        \begin{cases}
            \begin{pmatrix}
                -1 & 0\\
                0 & -1
            \end{pmatrix}, & z \in (0, \varepsilon), \\
            \begin{pmatrix}
                \frac{1}{1- e ^{\pm 2 i n \pi \sqrt{z}}} & 0\\
                0 & 1- e ^{\pm 2 i n \pi \sqrt{z}}
            \end{pmatrix}, & z \in (0, \pm i \varepsilon).
        \end{cases}
    \end{equation}
    }
    \item{\emph{Matching condition}: for $z \in \partial D(0, \varepsilon)$,
    \begin{equation} \label{match0}
        \mathbf{P}^{(0)}(z) = \left(I+ O\left( \frac{1}{n} \right)\right) \mathbf{N}(z), \qquad \textrm{as } n \to \infty.
    \end{equation}}
\end{itemize}

The solution to the above RH problem is given in terms of Gamma functions; see the similar constructions in \cite{lin2013}. Recall the definitions of $H^*(z)$ and $H(z)$  in \eqref{H*-def} and \eqref{H-def}. Using the Stirling's formula, one can see that both $H(z)$ and $H^*(z)$ tend to $1$ as $n \to \infty$.
%
%To construct the solution for the above RH problem, we recall the function $H(z)$ given in . Applying  to \eqref{H-def}, we have
%\begin{equation}\label{Hbehaviour}
%    H(z) = 1 + O \left( \frac{1}{n} \right) \quad \textrm{as $n \to \infty$}.
%\end{equation}
%And recall the function $H^*(z)$ given in , which is analytic for $\re z <0$, $z \in \mathbb{C} \setminus (-i\infty, 0] \cup [0, \infty)$. Similarly, we have
%\begin{equation}\label{H*behaviour}
%    H^*(z) = 1 + O \left( \frac{1}{n} \right) \quad \textrm{as $n \to \infty$}.
%\end{equation}
Then, the function
\begin{equation}
    Q(z) = \begin{cases}
            H(z)^{\sigma_3}, & \re z >0\\
            H^*(z)^{\sigma_3}, & \re z<0
    \end{cases}
\end{equation}
satisfies a RH problem as follows:
\begin{itemize}
    \item {\emph{Analyticity}: $Q(z)$ is analytic for $z \in \mathbb{C} \setminus (- i \infty, i \infty)$.}
    \item{\emph{Jump condition}: for $z \in (- i \infty, i \infty)$,
    \begin{equation}
        Q_+(z)=Q_-(z)
        \begin{cases}
            \begin{pmatrix}
                \frac{1}{1- e ^{ 2 i n \pi \sqrt{z}}} & 0\\
                0 & 1- e ^{ 2 i n \pi \sqrt{z}}
            \end{pmatrix}, & z \in (0,  i \infty), \\
            \begin{pmatrix}
                \frac{1}{1- e ^{- 2 i n \pi \sqrt{z}}} & 0\\
                0 & 1- e ^{- 2 i n \pi \sqrt{z}}
            \end{pmatrix}, & z \in (- i \infty, 0).
        \end{cases}
    \end{equation}
    }
    \item{\emph{Asymptotics at infinity}:
    \begin{equation}
        Q(z) = I+ O\left( \frac{1}{n} \right), \qquad \textrm{as } n \to \infty.
    \end{equation}}
\end{itemize}
From the above RH problem, it is easy to get the following solution to the RH problem for $\mathbf{P}^{(0)}(z)$:
\begin{equation}\label{P0-def}
    \mathbf{P}^{(0)}(z) = \mathbf{N}(z) Q(z) = \mathbf{N}(z) \begin{cases}
            H(z)^{\sigma_3}, & \re z >0,\\
            H^*(z)^{\sigma_3}, & \re z<0.
    \end{cases}
\end{equation}

\subsection{Local parametrix near $b$}

Next we move to the ``band-void" endpoint $b$. Let $D(b, \varepsilon):=  \{ z: |z-b| <  \varepsilon\}$ be a small disk centered at $b$. We look for a $2 \times 2$ matrix-valued function $\mathbf{P}^{(b)}(z)$ in $D(b, \varepsilon)$ such that it satisfies the following RH problem:
\begin{itemize}
    \item {\emph{Analyticity}: $\mathbf{P}^{(b)}(z)$ is analytic for $z \in \mathbb{C} \setminus D(b, \varepsilon)$.}
    \item{\emph{Jump condition}: for  $z \in D(b, \varepsilon) \cap \Sigma_{\mathbf{S}}$,
    \begin{equation}\label{jumppb}
        \mathbf{P}^{(b)}_+(z)=\mathbf{P}^{(b)}_-(z) J_{\mathbf{S}}(z).
    \end{equation}
    }
    \item{\emph{Matching condition}: for $z \in \partial D(b, \varepsilon)$,
    \begin{equation}\label{matchb}
        \mathbf{P}^{(b)}(z)(z) = \left(I+ O\left( \frac{1}{n} \right)\right)\mathbf{N}(z), \qquad \textrm{as } n \to \infty.
    \end{equation}}
\end{itemize}

The local parametrix is constructed in terms of the Airy function. Let $f(z)$ be the function defined in \eqref{f-def}, which is analytic in $D(b, \varepsilon) $, the solution is given by
\begin{equation}\label{Pb-def}
    \mathbf{P}^{(b)}(z)
        =  E^{(b)}(z) \mathbf{\Psi} (n^{\frac{2}{3}} f(z)) e^{-n(g(z) - \frac{cz}{2} - \frac{l}{2})\sigma_3}z^{(-\frac{\alpha}{2}+\frac{1}{4})\sigma_3},
\end{equation}
where the pre-factor $ E^{(b)}(z)$ is defined as
\begin{equation} \label{lp:b:E}
E^{(b)}(z) = \mathbf{N}(z) z^{(\frac{\alpha}{2} - \frac{1}{4}) \sigma_3} e^{\frac{\pi i}{4}\sigma_3} \frac{1}{\sqrt{2}}  \begin{pmatrix}
    1 & -1\\
    1 & 1
    \end{pmatrix} n^{\frac{1}{6} \sigma_3} f(z)^{\frac{1}{4} \sigma_3}.
\end{equation}
The function $\mathbf{\Psi}(\cdot)$ is the well-known Airy model parametrix (cf. \cite{deift1999-2}):
\begin{equation} \label{airy-pmx}
    \mathbf{\Psi}(\zeta) = \sqrt{2 \pi} e^{-\frac{\pi i}{12}}\times \begin{cases}
        \begin{pmatrix}
            \Ai (\zeta) & \Ai (\omega^2 \zeta)\\
            \Ai' (\zeta) & \omega^2 \Ai' (\omega^2 \zeta)
        \end{pmatrix}e^{-\frac{\pi i}{6}\sigma_3}, & \textrm{for $\zeta \in$ I},\\
        \begin{pmatrix}
            \Ai (\zeta) & \Ai (\omega^2 \zeta)\\
            \Ai' (\zeta) & \omega^2 \Ai' (\omega^2 \zeta)
        \end{pmatrix}e^{-\frac{\pi i}{6}\sigma_3}\begin{pmatrix}
            1 & 0\\
            -1 & 1
        \end{pmatrix}, & \textrm{for $\zeta \in$ II},\\
        \begin{pmatrix}
            \Ai (\zeta) & -\omega^2 \Ai (\omega \zeta)\\
            \Ai'(\zeta) & -\Ai' (\omega \zeta)
        \end{pmatrix}e^{-\frac{\pi i}{6}\sigma_3}\begin{pmatrix}
            1 & 0\\
            1 & 1
        \end{pmatrix}, & \textrm{for $\zeta \in$ III},\\
        \begin{pmatrix}
            \Ai (\zeta) & -\omega^2 \Ai (\omega \zeta)\\
            \Ai'(\zeta) & -\Ai' (\omega \zeta)
        \end{pmatrix}e^{-\frac{\pi i}{6}\sigma_3}, & \textrm{for $\zeta \in$ IV},
    \end{cases}
\end{equation}
with $\omega = e^{ \frac{2\pi i}{3} }$, where the regions I-IV are depicted in Figure \ref{sigma'}.
\begin{figure}[h]
    \centering
    \includegraphics[width=0.65\textwidth]{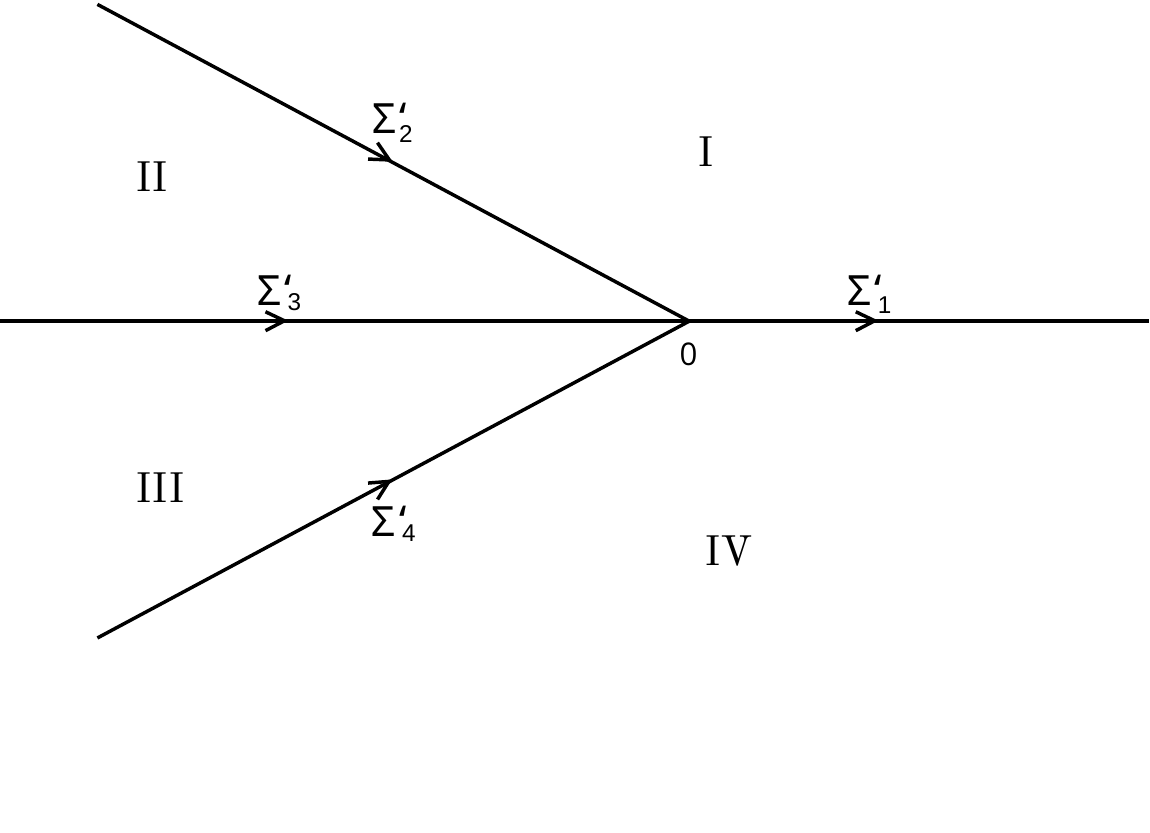} \vspace{-1.8cm}
    \caption{Regions and contours for $\mathbf{\Psi}$.}
    \label{sigma'}
\end{figure}

\begin{proposition} \label{prop-pb}
    The function defined in \eqref{Pb-def} satisfies the RH problem for $ \mathbf{P}^{(b)}$.
\end{proposition}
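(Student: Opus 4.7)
The plan is to verify the three defining properties of the RH problem for $\mathbf{P}^{(b)}$ directly from the explicit formula \eqref{Pb-def}, leaning on the known jumps and large-$\zeta$ asymptotics of the Airy model matrix $\mathbf{\Psi}$ in \eqref{airy-pmx}. The first task is to show that the pre-factor $E^{(b)}(z)$ defined in \eqref{lp:b:E} is analytic throughout $D(b,\varepsilon)$. The only candidates for singularities are the cut $(a,b)\cap D(b,\varepsilon)$ and the point $z=b$. Along the cut, the jump $\mathbf{N}_+ = \mathbf{N}_-\bigl(\begin{smallmatrix} 0 & x^{\alpha-1/2} \\ -x^{-\alpha+1/2} & 0\end{smallmatrix}\bigr)$ from \eqref{N-jump}, combined with $z^{(\alpha/2-1/4)\sigma_3}$ (continuous on $(a,b)$) and the boundary values $f_\pm(x)^{1/4}$ (which differ by a factor of $i$ since $f(x)<0$ on $(a,b)$), has to be conjugated to the identity through the constant matrix $e^{i\pi/4\,\sigma_3}\tfrac{1}{\sqrt 2}\bigl(\begin{smallmatrix} 1 & -1 \\ 1 & 1\end{smallmatrix}\bigr)$. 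This reduces to a short direct matrix identity. The isolated singularity at $b$ is at worst of order $|z-b|^{-1/2}$ by \eqref{Nbehaviour}, hence removable.

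Next, for the jump condition \eqref{jumppb}, I would exploit that $z\mapsto n^{2/3}f(z)$ is a conformal map sending the local arcs of $\Sigma_{\mathbf{S}}$ onto the rays of Figure \ref{sigma'}: $(a,b)\cap D(b,\varepsilon)$ to the negative real axis, $(b,b+\varepsilon)$ to the positive real axis, and the lens edges to the rays at angles $\pm 2\pi/3$. Since $E^{(b)}$ is analytic, the jumps of $\mathbf{P}^{(b)}$ are those of $\mathbf{\Psi}(n^{2/3}f(z))$ conjugated by the right factor $e^{-n(g(z)-cz/2-l/2)\sigma_3}\,z^{(-\alpha/2+1/4)\sigma_3}$. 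Inserting the boundary relations from Proposition \ref{prop-g-func}, in particular $g_+(x)+g_-(x)=cx+l$ and $g_+(x)-g_-(x)=2\pi i\int_x^b\rho(s)\,ds$ on $(a,b)$, recovers every entry of $J_{\mathbf{S}}$ listed in Section \ref{sec:dl:rhp:2nd}. The residual contributions on $(b,\infty)\pm i\varepsilon$ and $(b,b\pm i\varepsilon)$ that involve $e^{\pm 2in\pi\sqrt{z}}$ are exponentially small inside the disk and produce exactly the required triangular factors.

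The matching condition \eqref{matchb} is handled by invoking the standard large-argument expansion $\mathbf{\Psi}(\zeta)\sim \zeta^{-\sigma_3/4}\,\tfrac{1}{\sqrt 2}\bigl(\begin{smallmatrix} 1 & i \\ i & 1\end{smallmatrix}\bigr)e^{-\frac{\pi i}{4}\sigma_3}\bigl(I+O(\zeta^{-3/2})\bigr)e^{\frac{2}{3}\zeta^{3/2}\sigma_3}$. Setting $\zeta=n^{2/3}f(z)$ and invoking the key identity $\tfrac{2}{3}f(z)^{3/2}=-\pi\int_z^b\rho(s)\,ds=g(z)-\tfrac{1}{2}(cz+l)$ (valid on the appropriate branch by \eqref{f-def} and Proposition \ref{prop-g-func}), the exponentials $e^{\pm\frac{2}{3}n(f(z))^{3/2}\sigma_3}$ cancel the right factor $e^{-n(g-cz/2-l/2)\sigma_3}$ precisely. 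By design the constant matrices in $E^{(b)}$ are the inverses of the constants in the leading Airy asymptotics, so the product collapses to $\mathbf{N}(z)\bigl(I+O(n^{-1})\bigr)$ uniformly on $\partial D(b,\varepsilon)$.

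The main obstacle will be the careful bookkeeping of branch cuts: one must simultaneously keep track of the branches of $z^{\alpha/2-1/4}$, of $\gamma(z)$ inside $\mathbf{N}(z)$, and of $f(z)^{1/4}$, and check that their combined jumps on $(a,b)\cap D(b,\varepsilon)$ and on the lens edges are compatible both with the jumps of $\mathbf{\Psi}$ on the four Stokes rays and with the cancellation needed for analyticity of $E^{(b)}$. Once these branch choices are pinned down, the remaining verifications are routine calculations standard in the Deift--Zhou framework.
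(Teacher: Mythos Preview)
Your plan coincides with the paper's proof: first show $E^{(b)}$ is analytic in $D(b,\varepsilon)$ by checking that the jump of $\mathbf{N}$ on $(b-\varepsilon,b)$ is cancelled by the factor $f_+^{1/4}=f_-^{1/4}e^{i\pi/2}$ together with the constant conjugation, and that the at-worst $|z-b|^{-1/2}$ blow-up from \eqref{Nbehaviour} makes $b$ removable; then the jumps of $\mathbf{P}^{(b)}$ are inherited from the Airy model $\mathbf{\Psi}$, and the matching \eqref{matchb} follows from \eqref{psiinfty}. One small correction to your contour bookkeeping: the horizontal arcs $(b,\infty)\pm i\varepsilon$ that carry the $e^{\pm 2in\pi\sqrt{z}}$ factors lie on (or outside) $\partial D(b,\varepsilon)$ and play no role in the local parametrix; inside the disk the only jumps of $\Sigma_{\mathbf{S}}$ are on $(b-\varepsilon,b)$, $(b,b+\varepsilon)$, and the vertical segments $(b,b\pm i\varepsilon)$, and the latter carry the purely lower-triangular jump $\bigl(\begin{smallmatrix}1&0\\ \mp z^{-\alpha+1/2}e^{-n(2g-cz-l)}&1\end{smallmatrix}\bigr)$ with no $e^{\pm 2in\pi\sqrt{z}}$ present, so nothing ``residual'' needs to be produced there.
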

\begin{proof}

From \eqref{rho-b} and the definition of $f(z)$ in \eqref{f-def}, it is easy to see that
\begin{equation}
    f(z) = (C_2 \pi)^{\frac{2}{3}} (z-b) + O(z-b)^2,
\end{equation}
with $C_2 >0$. This implies that  $f(z)$ is analytic near $b$ and maps $D(b, \varepsilon)$ onto a convex neighbourhood of $0$. Then, it is easy to verify that the function defined in \eqref{Pb-def} satisfies the jump conditions $J_{\mathbf{S}}(z)$ for $z \in D(b, \varepsilon) \cap \Sigma_{\mathbf{S}}$ provided that the pre-factor $E^{(b)}(z)$  is analytic in $D(b, \varepsilon)$.

%We now can define
%\begin{equation}\label{lp:b:map}
%    \widetilde{\mathbf{P}}^{(b)}(z)=\mathbf{\Psi} (\zeta),
%\end{equation}
%with $\zeta = n^{\frac{2}{3}} f(z)$.

From the definition in \eqref{lp:b:E}, it is obvious that $E^{(b)}(z)$ is analytic in $D(b, \varepsilon) \setminus (-\infty, b)$. For $z \in (b-\varepsilon, b)$,  recalling the jump condition \eqref{N-jump} of the RH problem for $\mathbf{N}(z)$ and the fact that $f_{ +}(x)^{\frac{\sigma_3}{4}}=f_{ -}(x)^{\frac{\sigma_3}{4}}e^{\frac{\pi i}{2}\sigma_3}$ on $(b- \varepsilon, b)$, we have
    \begin{equation}
        \begin{split}
            E^{(b)}_+(x) & = \mathbf{N}_+(x) x^{(\frac{\alpha}{2} - \frac{1}{4}) \sigma_3} e^{\frac{\pi i}{4}\sigma_3} \frac{1}{\sqrt{2}}  \begin{pmatrix}
    1 & -1\\
    1 & 1
    \end{pmatrix} n^{\frac{1}{6} \sigma_3} f_{+}(x)^{\frac{1}{4} \sigma_3}\\
        & = \mathbf{N}_-(x) \begin{pmatrix}
                0 & x^{\alpha - \frac{1}{2}}\\
                -x^{-\alpha + \frac{1}{2}} & 0
            \end{pmatrix} x^{(\frac{\alpha}{2} - \frac{1}{4}) \sigma_3} e^{\frac{\pi i}{4}\sigma_3} \frac{1}{\sqrt{2}}  \begin{pmatrix}
    1 & -1\\
    1 & 1
    \end{pmatrix} n^{\frac{1}{6} \sigma_3} f_{-}(x)^{\frac{1}{4} \sigma_3} e^{\frac{\pi i}{2}\sigma_3}\\
    & = \mathbf{N}_-(x) x^{(\frac{a}{2} - \frac{1}{4}) \sigma_3} e^{\frac{\pi i}{4}\sigma_3} \frac{1}{\sqrt{2}}  \begin{pmatrix}
    1 & -1\\
    1 & 1
    \end{pmatrix} n^{\frac{1}{6} \sigma_3} f_{-}(x)^{\frac{1}{4} \sigma_3}\\
    &=E^{(b)}_-(x).
        \end{split}
    \end{equation}
 This means $E^{(b)}(z)$ has no jump in $D(b, \varepsilon)$. Moreover, from behaviour of $\mathbf{N}(z)$ at $b$ in \eqref{Nbehaviour}, one can see that $E^{(b)}(z)$ has at most square-root singularities at $b$. Therefore, $z = b$ is a removable singularity, and  $E^{(b)}(z)$ is indeed analytic in $D(b, \varepsilon)$.

%    In the third step, we determine the analytic factor $E^{(b)}(z)$ explicitly such that the matching conditions of $\mathbf{P}^{(b)}(z)$ is satisfied. From \eqref{psiinfty}, \eqref{lp:b}, \eqref{lp:b:map} and \eqref{lp:b:f}, we have
%\begin{equation}\label{lp:b:E}
%    \begin{split}
%        E^{(b)}(z) = & \mathbf{N}(z) z^{(\frac{\alpha}{2} - \frac{1}{4}) \sigma_3} \left[\frac{1}{\sqrt{2}} n^{-\frac{1}{6} \sigma_3} f(z)^{-\frac{1}{4} \sigma_3} \begin{pmatrix}
%    1 & 1\\
%    -1 & 1
%    \end{pmatrix}e^{-\frac{\pi i}{4}\sigma_3} \right]^{-1}
%    \end{split}
%\end{equation}
%where we take the principal branch of $f(z)^{\frac{1}{4}}$.

Finally, let us check the matching condition on $\partial D(b, \varepsilon)$. Recalling the following behaviour of $ \mathbf{\Psi}(\zeta)$ as $\zeta \to \infty$:
 \begin{equation}\label{psiinfty}
        \mathbf{\Psi}(\zeta)= \zeta^{-\frac{\sigma_3}{4}}\frac{1}{\sqrt{2}}
        \begin{pmatrix}
            1 & 1\\
            -1 & 1
        \end{pmatrix}\left[I+ \frac{1}{48 \zeta^{\frac{3}{2}}}\begin{pmatrix}
            1 & 6\\
            -6 & -1
        \end{pmatrix} + O\left( \frac{1}{\zeta^3} \right)\right]e^{-\frac{\pi i}{4}\sigma_3}e^{-\frac{2}{3}\zeta^{\frac{3}{2}}\sigma_3}.
    \end{equation}
Since $\zeta = n^{\frac{2}{3}} f(z) $ is large uniformly for $z \in \partial D(b, \varepsilon)$ when $n \to \infty$, we have from the above formula and \eqref{Pb-def} that
\begin{eqnarray}
 \mathbf{P}^{(b)}(z)
        &= & \mathbf{N}(z) z^{(\frac{\alpha}{2} - \frac{1}{4}) \sigma_3} e^{\frac{\pi i}{4}\sigma_3} \frac{1}{\sqrt{2}}  \begin{pmatrix}
    1 & -1\\
    1 & 1
    \end{pmatrix} n^{\frac{1}{6} \sigma_3} f(z)^{\frac{1}{4} \sigma_3} \nonumber  \\
        && \times (n^{\frac{2}{3}} f(z))^{-\frac{\sigma_3}{4}}\frac{1}{\sqrt{2}}
        \begin{pmatrix}
            1 & 1\\
            -1 & 1
        \end{pmatrix}\left[I+ \frac{1}{48 n f(z)^{\frac{3}{2}}}\begin{pmatrix}
            1 & 6\\
            -6 & -1
        \end{pmatrix} + O\left( \frac{1}{n^2} \right)\right] \nonumber \\
       & & \times e^{-\frac{\pi i}{4}\sigma_3}e^{-\frac{2}{3} n f(z)^{\frac{3}{2}}\sigma_3} e^{-n(g(z) - \frac{cz}{2} - \frac{l}{2})\sigma_3}z^{(-\frac{\alpha}{2}+\frac{1}{4})\sigma_3} \nonumber \\
       & = & \mathbf{N}(z) \left[ I+ \frac{1}{48 n f(z)^{\frac{3}{2}}} \begin{pmatrix}
        1 & 6i z^{\alpha -\frac{1}{2}}\\
        6i z^{-\alpha +\frac{1}{2}} & -1
        \end{pmatrix} + O\left( \frac{1}{n^2} \right) \right], \quad n \to \infty.
\end{eqnarray}
This completes the proof of the proposition.
\end{proof}

\subsection{Local parametrix near $a$}

Next, we move to the ``saturated region-band" endpoint $a$ and consider a small disk $D(a, \varepsilon): = \{ z: |z-a| <  \varepsilon\}$. We seek a local parametrix $\mathbf{P}^{(a)}(z)$ defined on $D(a, \varepsilon)$ such that
\begin{itemize}
    \item {\emph{Analyticity}: $\mathbf{P}^{(a)}(z)$ is analytic for $z \in \mathbb{C} \setminus D(a, \varepsilon)$.}
    \item{\emph{Jump condition}:for $z \in D(a, \varepsilon) \cap \Sigma_{\mathbf{S}}$,
    \begin{equation}\label{jumppa}
        \mathbf{P}^{(a)}_+(z)=\mathbf{P}^{(a)}_-(z)J_{\mathbf{S}}(z).
    \end{equation}
    }
\item{\emph{Matching condition}: for $z \in \partial D(a, \varepsilon)$,
    \begin{equation}\label{matcha}
        \mathbf{P}^{(a)}(z) = \left(I+ O\left( \frac{1}{n} \right)\right)\mathbf{N}(z), \qquad \textrm{as } n \to \infty.
    \end{equation}}
\end{itemize}

The parametrix construction is similar to that in $D(b, \varepsilon)$, which is also given in terms of the Airy functions. With the function $\widetilde{f}(z)$ defined in \eqref{f2-def}, we have
\begin{equation} \label{Pa-def}
     \mathbf{P}^{(a)}(z)
        = \pm E^{(a)}(z) \widetilde{\Psi} (n^{\frac{2}{3}} \widetilde{f}(z))\sigma_1 e^{-n(g(z) - \frac{cz}{2} - \frac{l}{2} \pm i  \pi \sqrt{z})\sigma_3}z^{(-\frac{\alpha}{2}+\frac{1}{4})\sigma_3} \quad \textrm{for } \pm \im{z} \ge 0,
\end{equation}
where
\begin{equation}\label{lp:a:E}
    E^{(a)}(z) = \pm (-1)^n \mathbf{N}(z) z^{(\frac{\alpha}{2} - \frac{1}{4})\sigma_3} e^{-\frac{\pi i}{4}\sigma_3}\frac{1}{\sqrt{2}}\begin{pmatrix}
    -1 & 1\\
    1 & 1
    \end{pmatrix}n^{\frac{1}{6} \sigma_3} (-\widetilde{f}(z))^{\frac{1}{4} \sigma_3} \quad \textrm{for } \pm \im{z} \ge 0,
\end{equation}
and
\begin{equation}
    \widetilde{\Psi}(\zeta) = \sigma_3 \Psi (-\zeta) \sigma_3.
\end{equation}

\begin{proposition}
    The function defined in \eqref{Pa-def} satisfies the RH problem for $ \mathbf{P}^{(a)}$.
\end{proposition}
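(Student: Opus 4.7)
The plan is to follow the template of the proof of Proposition \ref{prop-pb} and check four items in turn: conformality of $\widetilde{f}$ near $a$, the jumps of $\mathbf{P}^{(a)}$, analyticity of the prefactor $E^{(a)}$, and the matching condition on $\partial D(a,\varepsilon)$. First I would use the endpoint expansion \eqref{rho-a} of $\rho$, which gives $\frac{1}{2\sqrt{s}}-\rho(s)=C_{1}\sqrt{s-a}\,(1+O(s-a))$ near $a$. Integrating from $a$ to $z$ produces $(z-a)^{3/2}$ times an analytic factor, so the $2/3$-power yields $\widetilde{f}(z)=(\pi C_{1})^{2/3}(z-a)+O((z-a)^{2})$, analytic with $\widetilde{f}'(a)>0$. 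Hence $\widetilde{f}$ is a local conformal map sending $(a,a+\varepsilon)$ to the positive real axis and $(a-\varepsilon,a)$ to the negative real axis, and the image of the contour $\Sigma_{\mathbf{S}}\cap D(a,\varepsilon)$ under $z\mapsto n^{2/3}\widetilde{f}(z)$ lies (after the reflection $\zeta\mapsto -\zeta$ built into $\widetilde{\Psi}=\sigma_{3}\mathbf{\Psi}(-\cdot)\sigma_{3}$) on the model contour of Figure \ref{sigma'}.

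Next I would verify the jump conditions. The main ingredients are the $g$-function identities from Proposition \ref{prop-g-func}, namely $g_{+}(x)+g_{-}(x)=cx+l$ on $(a,b)$ and $g_{+}(x)-g_{-}(x)=2\pi i(1-\sqrt{x})$ on $(0,a)$, together with the jumps of $\widetilde{\Psi}$ obtained from those of $\mathbf{\Psi}$ by the $\sigma_{3}(\cdot)\sigma_{3}$ conjugation and the $\zeta\mapsto -\zeta$ flip. On $(a,b)\cap D(a,\varepsilon)$, the central Airy jump $\bigl(\begin{smallmatrix}0 & 1\\ -1 & 0\end{smallmatrix}\bigr)$ matches the jump of $\mathbf{S}$ after conjugation by the rightmost factor $z^{(-\alpha/2+1/4)\sigma_{3}}$ and the inner $\sigma_{1}$. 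On $(0,a)\cap D(a,\varepsilon)$, the constant jump $-I$ of $\mathbf{S}$ is reproduced by the $\pm$ sign switch in the definition of $\mathbf{P}^{(a)}$ for $\pm\im z\ge 0$, combined with the unitary jumps of $\widetilde{\Psi}$ on the positive real axis. On the lens boundary $(a,b)\pm i\varepsilon$ and the vertical legs $(a,a\pm i\varepsilon)$, the exponent $e^{-n(g(z)-cz/2-l/2\pm i\pi\sqrt{z})\sigma_{3}}$ is designed precisely so that the Stokes-type jumps of $\widetilde{\Psi}$ on the rays $\arg(-n^{2/3}\widetilde{f})=\pm 2\pi/3$ pick up the factors $e^{\pm i n\pi\sqrt{z}}$ and $e^{\pm n(2g-cz-l)}$ appearing in $J_{\mathbf{S}}$.

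Analyticity of $E^{(a)}(z)$ is the most delicate bookkeeping. Away from the real axis the definition is manifestly holomorphic. Across $(a-\varepsilon,a)$, the jump $\mathbf{N}_{+}=-\mathbf{N}_{-}$ from \eqref{N-jump} is compensated by the sign switch $\pm(-1)^{n}\to \mp(-1)^{n}$, while $(-\widetilde{f})^{\sigma_{3}/4}$ is analytic on that interval. Across $(a,a+\varepsilon)$, $\mathbf{N}$ has the jump $J_{\mathbf{N}}$ involving $x^{\alpha-1/2}$, the factor $(-\widetilde{f})^{\sigma_{3}/4}$ jumps by $(-i)^{\sigma_{3}}$, and the $\pm$ sign flips; I would verify the matrix identity
\begin{equation*}
x^{-(\alpha/2-1/4)\sigma_{3}}\,J_{\mathbf{N}}(x)\,x^{(\alpha/2-1/4)\sigma_{3}}\,e^{-\pi i\sigma_{3}/4}\,U\,(-i)^{\sigma_{3}}\,U^{-1}\,e^{\pi i\sigma_{3}/4}=-I,
\end{equation*}
where $U=\tfrac{1}{\sqrt{2}}\bigl(\begin{smallmatrix}-1 & 1\\ 1 & 1\end{smallmatrix}\bigr)$, which makes the two sign contributions cancel. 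Combined with the at-worst $|z-a|^{-1/4}$ control from \eqref{Nbehaviour}, the isolated singularity at $a$ is removable and $E^{(a)}$ is analytic in $D(a,\varepsilon)$.

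Finally, for the matching condition, I insert the large-$\zeta$ expansion \eqref{psiinfty} into $\widetilde{\Psi}(n^{2/3}\widetilde{f}(z))$, which is uniformly applicable on $\partial D(a,\varepsilon)$. The leading factor $\zeta^{-\sigma_{3}/4}\tfrac{1}{\sqrt{2}}\bigl(\begin{smallmatrix}1 & 1\\ -1 & 1\end{smallmatrix}\bigr)e^{-\pi i\sigma_{3}/4}e^{-\frac{2}{3}\zeta^{3/2}\sigma_{3}}$, after conjugation by $\sigma_{3}$ and cancellation against the $\sigma_{1}e^{-n(g-cz/2-l/2\pm i\pi\sqrt{z})\sigma_{3}}z^{(-\alpha/2+1/4)\sigma_{3}}$ on the right and the $z^{(\alpha/2-1/4)\sigma_{3}}e^{-\pi i\sigma_{3}/4}U n^{\sigma_{3}/6}(-\widetilde{f})^{\sigma_{3}/4}$ on the left, collapses $\mathbf{P}^{(a)}(z)$ to $\mathbf{N}(z)(I+O(1/n))$. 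The main obstacle I expect is keeping track of the sign conventions in step three, as well as ensuring the exponential $e^{\pm in\pi\sqrt{z}}$ inside $J_{\mathbf{S}}$ along the lens and the vertical legs is correctly produced by the combination of $\widetilde{\Psi}$'s Stokes jumps and the $\pm$ in the exponent of the rightmost factor; every other ingredient is analogous to the $b$-case treated in Proposition \ref{prop-pb}.
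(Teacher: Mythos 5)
Your proposal follows the same strategy as the paper: verify that $E^{(a)}(z)$ is analytic on $D(a,\varepsilon)$ by tracking, across $(a-\varepsilon,a)$ and $(a,a+\varepsilon)$ separately, the jump of $\mathbf{N}$, the branch jump of $(-\widetilde f)^{\sigma_3/4}$, and the $\pm$ sign flip in the definition of $E^{(a)}$; then remove the singularity at $a$ via the $|z-a|^{-1/4}$ bound from \eqref{Nbehaviour}; and finally obtain the matching condition from the large-$\zeta$ expansion \eqref{psiinfty}. The matrix identity you propose to check on $(a,a+\varepsilon)$ is correct (a short computation gives $x^{-(\alpha/2-1/4)\sigma_3}J_{\mathbf{N}}x^{(\alpha/2-1/4)\sigma_3}=\bigl(\begin{smallmatrix}0&1\\-1&0\end{smallmatrix}\bigr)$ and $e^{-\pi i\sigma_3/4}U(-i)^{\sigma_3}U^{-1}e^{\pi i\sigma_3/4}=\bigl(\begin{smallmatrix}0&1\\-1&0\end{smallmatrix}\bigr)$, whose product is $-I$), and it is equivalent to the chained substitution the paper performs, so the argument goes through exactly as in the paper.
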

\begin{proof}
The proof is similar to that of Proposition \ref{prop-pb}. The main task is to check the pre-factor $E^{(a)}(z)$ is analytic in $D(a, \varepsilon)$. According its definition in \eqref{lp:a:E}, we need to verify that $E^{(a)}(z)$ has no jumps on $(a-\varepsilon, a+\varepsilon)$ and $z = a$ is a removable singularity.

 For $x \in (a-\varepsilon, a)$, recall that $\mathbf{N}_+(x) = -\mathbf{N}_-(x)$. Then, we have
    \begin{equation}
        \begin{split}
            E^{(a)}_+(x)
            = & (-1)^n \mathbf{N}_+(x) x^{(\frac{\alpha}{2} - \frac{1}{4})\sigma_3} e^{-\frac{\pi i}{4}\sigma_3}\frac{1}{\sqrt{2}}\begin{pmatrix}
    -1 & 1\\
    1 & 1
    \end{pmatrix}n^{\frac{1}{6} \sigma_3} (-\widetilde{f}(x))^{\frac{1}{4} \sigma_3}\\
            = & -(-1)^n \mathbf{N}_-(x) x^{(\frac{\alpha}{2} - \frac{1}{4})\sigma_3} e^{-\frac{\pi i}{4}\sigma_3}\frac{1}{\sqrt{2}}\begin{pmatrix}
    -1 & 1\\
    1 & 1
    \end{pmatrix}n^{\frac{1}{6} \sigma_3} (-\widetilde{f}(x))^{\frac{1}{4} \sigma_3}\\
            = & E^{(a)}_-(x).
        \end{split}
    \end{equation}
  For $x \in (a, a+\varepsilon)$, we get from \eqref{N-jump} that
    \begin{eqnarray}
            E^{(a)}_+(x)
            &= & (-1)^n \mathbf{N}_+(x) x^{(\frac{\alpha}{2} - \frac{1}{4})\sigma_3} e^{-\frac{\pi i}{4}\sigma_3}\frac{1}{\sqrt{2}}\begin{pmatrix}
    -1 & 1\\
    1 & 1
    \end{pmatrix}n^{\frac{1}{6} \sigma_3} (-\widetilde{f}_{+}(x))^{\frac{1}{4} \sigma_3} \nonumber \\
            &= & (-1)^n \mathbf{N}_-(x) \begin{pmatrix}
            0 & x^{\alpha-\frac{1}{2}}\\
            -x^{-\alpha+\frac{1}{2}} & 0
            \end{pmatrix} x^{(\frac{\alpha}{2} - \frac{1}{4})\sigma_3} e^{-\frac{\pi i}{4}\sigma_3}\frac{1}{\sqrt{2}}\begin{pmatrix}
    -1 & 1\\
    1 & 1
    \end{pmatrix}n^{\frac{1}{6} \sigma_3} (-\widetilde{f}_-(x)e^{-2 \pi i})^{\frac{1}{4} \sigma_3} \nonumber  \\
            &= & -(-1)^n \mathbf{N}_-(x) x^{(\frac{\alpha}{2} - \frac{1}{4})\sigma_3} e^{-\frac{\pi i}{4}\sigma_3}\frac{1}{\sqrt{2}}\begin{pmatrix}
    -1 & 1\\
    1 & 1
    \end{pmatrix}n^{\frac{1}{6} \sigma_3} (-\widetilde{f}_-(x))^{\frac{1}{4} \sigma_3} \nonumber \\
            &= & E^{(a)}_-(x),
    \end{eqnarray}
    which means $E^{(a)}(z)$ has no jumps on $(a - \varepsilon, a + \varepsilon).$ Moreover, with behaviour of $\mathbf{N}(z)$ at $a$ in \eqref{Nbehaviour}, $E^{(a)}(z)$ has at most square-root singularities at $a$. Therefore, $z = a$  is a removable singularity of $E^{(a)}(z)$.

For the matching condition on $\partial D(a, \varepsilon)$, we have from \eqref{psiinfty} and  \eqref{Pa-def} that
\begin{eqnarray}
        \mathbf{P}^{(a)}(z)
        &= &  (-1)^n \mathbf{N}(z) z^{(\frac{\alpha}{2} - \frac{1}{4})\sigma_3} e^{-\frac{\pi i}{4}\sigma_3}\frac{1}{\sqrt{2}}\begin{pmatrix}
    -1 & 1\\
    1 & 1
    \end{pmatrix}n^{\frac{1}{6} \sigma_3} (-\widetilde{f}(z))^{\frac{1}{4} \sigma_3} \nonumber \\
       & & \times \sigma_3 (n^{\frac{2}{3}} (-\widetilde{f}(z)))^{-\frac{\sigma_3}{4}}\frac{1}{\sqrt{2}}
        \begin{pmatrix}
            1 & 1\\
            -1 & 1
        \end{pmatrix}\left[I+ \frac{1}{48 n (-\widetilde{f}(z))^{\frac{3}{2}}}\begin{pmatrix}
            1 & 6\\
            -6 & -1
        \end{pmatrix} + O\left( \frac{1}{n^2} \right)\right] \nonumber \\
        && \times e^{-\frac{\pi i}{4}\sigma_3}e^{-\frac{2}{3} n (-\widetilde{f}(z))^{\frac{3}{2}}\sigma_3} \sigma_3 \sigma_1 e^{-n(g(z) - \frac{cz}{2} - \frac{l}{2} \pm i  \pi \sqrt{z})\sigma_3}z^{(-\frac{\alpha}{2}+\frac{1}{4})\sigma_3} \nonumber \\
        &= & \mathbf{N}(z) \left[ I+ \frac{1}{48 n (-\widetilde{f}(z))^{\frac{3}{2}}} \begin{pmatrix}
        -1 & -6i z^{\alpha -\frac{1}{2}}\\
        -6i z^{-\alpha +\frac{1}{2}} & 1
        \end{pmatrix} + O\left( \frac{1}{n^2} \right) \right].
\end{eqnarray}

    This finishes the proof of the proposition.
\end{proof}

\subsection{Final transformation}
Now we perform the final transformation of our RH problem. Define
\begin{equation}\label{stor}
    \mathbf{R}(z) = \begin{cases}
    \mathbf{S}(z) \mathbf{P}^{(0)}(z)^{-1}, & \textrm{for $z \in D(0, \varepsilon) \setminus \Sigma_{\mathbf{S}}$,}\\
    \mathbf{S}(z) \mathbf{P}^{(a)}(z)^{-1}, & \textrm{for $z \in D(a, \varepsilon) \setminus \Sigma_{\mathbf{S}}$,}\\
    \mathbf{S}(z) \mathbf{P}^{(b)}(z)^{-1}, & \textrm{for $z \in D(b, \varepsilon) \setminus \Sigma_{\mathbf{S}}$,}\\
     \mathbf{S}(z) \mathbf{N}(z)^{-1}, & \textrm{otherwise}.
    \end{cases}
\end{equation}
Then, $\mathbf{R}(z)$ satisfies the following RH problem:
\begin{itemize}
    \item {\emph{Analyticity}: $\mathbf{R}(z)$ is analytic for $z \in \mathbb{C} \setminus \Sigma_{\mathbf{R}}$.}
    \begin{figure}[h]
    \centering
    \includegraphics[width=0.8\textwidth]{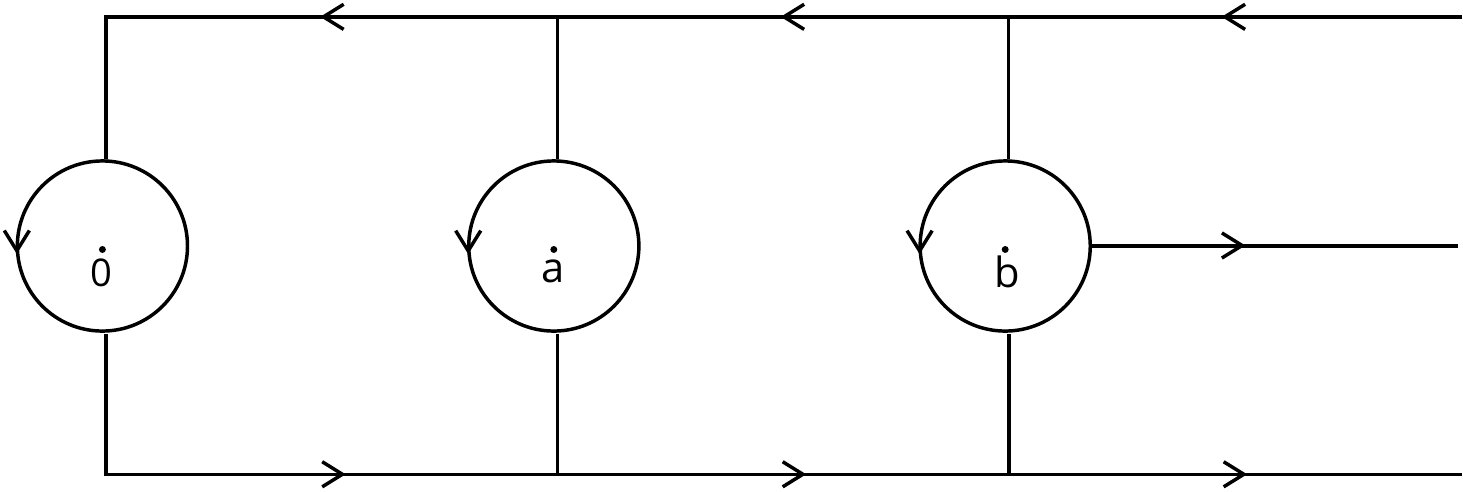}
    \caption{Contour $\Sigma_{\mathbf{R}}$.}
    \label{sigmaR}
\end{figure}

    \item{\emph{Jump condition}: $ \mathbf{R}_+(z) = \mathbf{R}_-(z) J_{\mathbf{R}}(z)$ for $z \in \Sigma_{\mathbf{R}}$,
    \begin{equation} \label{R-jump}
    J_{\mathbf{R}}(z) = \begin{cases}
        \mathbf{N}(z) \mathbf{P}^{(0)}(z)^{-1}, & \textrm{for $z \in \partial D(0, \varepsilon)$,}\\
        \mathbf{N}(z) \mathbf{P}^{(a)}(z)^{-1}, & \textrm{for $z \in \partial D(a, \varepsilon)$,}\\
        \mathbf{N}(z) \mathbf{P}^{(b)}(z)^{-1}, & \textrm{for $z \in \partial D(b, \varepsilon)$,}\\
        \mathbf{N}(z) J_{\mathbf{S}}(z) \mathbf{N}(z)^{-1}, & \textrm{elsewhere}.
    \end{cases}
\end{equation}
    }
    \item{\emph{Asymptotics at infinity}:
    \begin{equation} \label{R-large-z}
        \mathbf{R}(z) = I + \frac{\mathbf{R}_1}{z} + \frac{\mathbf{R}_2}{z^2} + O\left(\frac{1}{z^3}\right), \qquad \textrm{as } z \to \infty.
    \end{equation}}
\end{itemize}

Based on our local parametrix constructions near the endpoints 0, $a$ and $b$, $\mathbf{R}(z)$ has no jumps in $D(0, \varepsilon)$, $D(a, \varepsilon)$ and $D(b, \varepsilon)$, respectively. Then, the jump condition in \eqref{R-jump} is easily verified. We only need to check that $\mathbf{R}(z)$  has no poles  at $0, a$ and $b$.

For $z$ near $0$, since $\mathbf{S}(z)$ satisfies the same behaviour near 0 as $\mathbf{Y}(z)$, we have from \eqref{Y-origin}, \eqref{Nbehaviour} and  \eqref{P0-def} that
    \begin{equation}
        \mathbf{R}(z) =
        \begin{cases}
            \begin{pmatrix}
                O(|z|^{-\frac{3}{4}}) & O(|z|^{-\frac{3}{4}})\\
                O(|z|^{-\frac{3}{4}}) & O(|z|^{-\frac{3}{4}})
            \end{pmatrix}, & \re{z} \le 0,\\
            \begin{pmatrix}
                O(|z|^{-\frac{3}{4}}) & O(|z|^{-\frac{3}{4}})\\
                O(|z|^{-\frac{3}{4}}) & O(|z|^{-\frac{3}{4}})
            \end{pmatrix}, & \re{z} > 0, \alpha \le 1,\\
            \begin{pmatrix}
                O(|z|^{\frac{1}{4} - \alpha}) &  O(|z|^{\frac{1}{4} - \alpha})\\
                O(|z|^{\frac{1}{4} - \alpha}) &  O(|z|^{\frac{1}{4} - \alpha})
            \end{pmatrix}, & \re{z} > 0, \alpha > 1,
        \end{cases} \qquad \textrm{as } z \to 0,
    \end{equation}
which shows that $\mathbf{R}(z)$ has a removable singularity at $0$. For $z$ near the endpoints $a$ and $b$,  as the Airy parametrix $\mathbf{\Psi}(\zeta)$ in \eqref{airy-pmx} is bounded near $0$, $\mathbf{P}^{(b)}(z)$ and $\mathbf{P}^{(a)}(z)$ are also bounded near $b$ and $a$ according to their definitions in \eqref{Pb-def} and \eqref{Pa-def}. Therefore, $\mathbf{S}(z)$ remains bounded near $b$ and $a$, which means that $\mathbf{R}(z)$ is analytic at all of the three endpoints 0, $a$ and $b$.

%\begin{proposition}
%    The matrix valued function $\mathbf{R}(z)$ defined by \eqref{stor} is analytic in $\mathbb{C} \setminus \Sigma_{\mathbf{R}}$, where $\Sigma_{\mathbf{R}}$ is defined as Figure \ref{sigmaR}.
%\end{proposition}
%\begin{proof}
%    By the construction of the global parametrix $\mathbf{M}(z)$ and local parametrices $\mathbf{P}^{(0)}(z)$, $\mathbf{P}^{(a)}(z)$ and $\mathbf{P}^{(b)}(z)$, one can check that $\mathbf{R}(z)$ is analytic in $\mathbb{C} \setminus (\Sigma_{\mathbf{R}} \cup \{0, a, b\})$.
%
%
%\end{proof}

From the matching conditions in \eqref{match0}, \eqref{matchb} and \eqref{matcha}, one can see the jump $J_{\mathbf{R}}(z)$ in \eqref{R-jump} has the following behaviour when  $n \to \infty$:
\begin{equation}
    J_{\mathbf{R}}(z) = \begin{cases}
        I + O\left( \frac{1}{n} \right), & \textrm{on $\partial D(0, \varepsilon) \cup \partial D(a, \varepsilon) \cup \partial D(b, \varepsilon)$},\\
        I + O(e^{-\delta n}), & \textrm{on the rest of $\Sigma_{\mathbf{R}}$},
    \end{cases}
\end{equation}
where $\delta > 0$ is some fixed constant and all the $O$-terms hold uniformly on their respective contours. Therefore, the RH problem for $\mathbf{R}(z)$ is a small-norm RH problem. By a standard argument in \cite{deift1999-1}, we get
    \begin{equation}\label{Rbehaviour}
        \mathbf{R}(z) = I + O\left(\frac{1}{n(|z|+1)}\right), \qquad \textrm{as } n \to \infty,
    \end{equation}
    uniformly for $z \in \mathbb{C} \setminus \Sigma_{\mathbf{R}}$.

\section{Proof of the main results}
\label{sec:dl:proof}

The steepest descent analysis for RH problems has been completed.  Then, we are ready to prove our main results.

%In this section, we determine the asymptotic expansion of the normalizing constant $h_{n, n}$, recurrence coefficients $a_{n, n}^2$ and $\mathscr{B}_{n, n}$ and the orthogonal polynomials $P_{n, n}(z)$ as $n \to \infty$ and then prove our main theorems given in Section \ref{sec:dl:mr}.

\subsection{Proof of Theorem \ref{coefficient}}

To get the asymptotics of $h_{n, n}$, $\mathscr{A}_{n, n}^2$ and $\mathscr{B}_{n, n}$, we need the large-$z$ asymptotics of $\mathbf{P}(z)$ and make use of the relations in  \eqref{hN} and \eqref{gammabeta}. Since the transformations in \eqref{iptorhp}, \eqref{ytot}, \eqref{ttos} and \eqref{stor} are all invertible, we trace back the transformations $\mathbf{P} \to \mathbf{Y} \to \mathbf{T} \to \mathbf{S} \to \mathbf{R}$ and obtain
\begin{equation}\label{outside}
    \mathbf{P}(z) = K^{-1} e^{\frac{nl}{2} \sigma_3} \mathbf{R}(z) \mathbf{N}(z) e^{n(g(z) - \frac{l}{2}) \sigma_3} K,
\end{equation}
for $z$ bounded away from the positive real line and
\begin{equation}\label{V}
        \mathbf{P}(z) = \begin{cases}
            K^{-1} e^{\frac{nl}{2} \sigma_3} \mathbf{R}(z) \mathbf{N}(z) e^{n(g(z) - \frac{l}{2}) \sigma_3} K D_{+}^u(z) ^{-1}, \quad \textrm{for $\im{z} > 0$,}\\
            K^{-1} e^{\frac{nl}{2} \sigma_3} \mathbf{R}(z) \mathbf{N}(z) e^{n(g(z) - \frac{l}{2}) \sigma_3} K D_{-}^u(z) ^{-1}, \quad \textrm{for $\im{z} < 0$,}
        \end{cases}
    \end{equation}
for $z$ close to the positive real line but bounded away form the support of the equilibrium measure $(0, b)$. Note that the functions $D_{\pm}^u(z)$ are exponentially close to the identity matrix as $z \to \infty$ from their definitions in \eqref{Du}.  And obviously, it follows from \eqref{R-large-z} and \eqref{Rbehaviour} that
\begin{equation}
    \mathbf{R}_1 = O\left( \frac{1}{n} \right), \quad \mathbf{R}_2 = O\left( \frac{1}{n} \right).
\end{equation}
From the definition of the $g$-function in \eqref{g}, we have
\begin{equation}\label{gforzlarge}
    e^{n g(z) \sigma_3}\begin{pmatrix}
        z^{-n} & 0\\
        0 & z^{n}
    \end{pmatrix} = I + \frac{G_1}{z} + \frac{G_2}{z^2} + O\left(\frac{1}{z^3}\right), \qquad \textrm{as } z \to \infty,
\end{equation}
with
    $[G_1]_{12} = [G_1]_{21} = 0$ and $[G_1]_{11} + [G_1]_{22} = 0. $ Recalling the large-$z$ behaviour of $\mathbf{N}(z) $ in \eqref{Mforzlarge}, we have from the above formulas
    \begin{equation}\label{P1}
        \mathbf{P}_1 = K^{-1} e^{\frac{nl}{2}\sigma_3} \Big[\mathbf{R}_1+\mathbf{N}_1+G_1 \Big]e^{-\frac{nl}{2}\sigma_3}K
    \end{equation}
    and
    \begin{equation}\label{P2}
        \mathbf{P}_2 = K^{-1} e^{\frac{nl}{2}\sigma_3} \Big[\mathbf{R}_2+\mathbf{N}_2+G_2 + \mathbf{R}_1\mathbf{N}_1 + \mathbf{R}_1 G_1+\mathbf{N}_1 G_1 \Big]e^{-\frac{nl}{2}\sigma_3}K,
    \end{equation}
 where $\mathbf{P}_1$ and $\mathbf{P}_2$ are the coefficients in \eqref{Pinfty}, $\mathbf{N}_1$ and $\mathbf{N}_2$ are given in \eqref{N1-def} and \eqref{N2-def}.

 Finally, we get the asymptotics of $h_{n,n}$ in  \eqref{hn-final} by  inserting \eqref{P1} into \eqref{hN}.  Similarly, the asymptotics of the recurrence coefficients $\mathscr{A}_{n, n}$ and $\mathscr{B}_{n, n}$ in \eqref{an-final} and \eqref{bn-final} are obtained by substituting  \eqref{P1} and \eqref{P2} into \eqref{gammabeta}.

This completes the proof of Theorem \ref{coefficient}. \hfill $\Box$

%In order to compute $\mathbf{P}_1$ and $\mathbf{P}_2$, we need the asymptotic behaviour of $\mathbf{R}(z)$, $\mathbf{N}(z)$ and $e^{n g(z) \sigma_3}$ as $z \to \infty$.
%
%\subsubsection*{Asymptotics of $\mathbf{R}(z)$ as $z \to \infty$}
%Remark \ref{R} implies that $\mathbf{R}(z)$ is uniformly close to the identity matrix as $n \to \infty$. $\mathbf{R}(z)$ satisfies the following asymptotics at infinity:
%\begin{equation}\label{Rforzlarge}
%    \mathbf{R}(z) = I + \frac{\mathbf{R}_1}{z} + \frac{\mathbf{R}_2}{z^2} + O\left(\frac{1}{z^3}\right),
%\end{equation}
%as $z \to \infty$, where the constant matrices

%\subsubsection*{Asymptotics of $e^{n g(z) \sigma_3}$ as $z \to \infty$}

\subsection{Proofs of Theorem \ref{asymptoticinv} to \ref{asymptotica}}

Next, we derive the asymptotics of orthogonal polynomials $P_{n, n}(z)$ in different regions in the complex plane as $n \to \infty$. We first consider the outside region, which is bounded away from $(0, b)$.
\begin{proof}[Proof of Theorem \ref{asymptoticinv}]
    For $z$ bounded away form the support of the equilibrium measure, we use the formulas for $\mathbf{P}(z)$ given in \eqref{outside} and \eqref{V}. Since the first column of $D_{\pm}^u(z)$ is $\begin{pmatrix} 1 \\ 0 \end{pmatrix}$, the formulas \eqref{outside} and \eqref{V} admit the same (1,1)-entry, which gives us
 %   Taking the (1,1)-entry of the formulas \eqref{outside} or \eqref{V}, we have
    \begin{equation}
        P_{n, n}(z) = [\mathbf{P}(z)]_{11} = e^{ng(z)}\left([\mathbf{R}(z)]_{11} [\mathbf{N}(z)]_{11}+[\mathbf{R}(z)]_{12} [\mathbf{N}(z)]_{21} \right).
    \end{equation}
    With the explicit expression of $\mathbf{N}(z)$ in \eqref{global} and the asymptotics of $\mathbf{R}(z)$ in \eqref{Rbehaviour}, we obtain \eqref{pn-outside}.

    This finishes the proof of Theorem \ref{asymptoticinv}.
\end{proof}
For the oscillating region, we divide the interval $(0, b)$ into two subintervals: the band $(a, b)$ and the saturated region $(0, a)$.
%it contains two parts: the region of band and the saturated region.
\begin{proof}[Proof of Theorem \ref{asymptoticinb}]
    For $z$ close to the interval $(a, b)$ and bounded away from the endpoints $a$ and $b$, from the invertible transformations \eqref{iptorhp}, \eqref{ytot}, \eqref{ttos} and \eqref{stor}, we get
    \begin{equation}\label{B}
        \mathbf{P}(z) = \begin{cases}
            K^{-1} e^{\frac{nl}{2} \sigma_3} \mathbf{R}(z) \mathbf{N}(z) B(z) e^{n(g(z) - \frac{l}{2}) \sigma_3} K D_{+}^u(z) ^{-1}, \quad \textrm{for $\im{z} > 0$,}\\
            K^{-1} e^{\frac{nl}{2} \sigma_3} \mathbf{R}(z) \mathbf{N}(z) B(z)^{- 1} e^{n(g(z) - \frac{l}{2}) \sigma_3} K D_{-}^u(z) ^{-1}, \quad \textrm{for $\im{z} < 0$.}
        \end{cases}
    \end{equation}
    Taking limit as $z$ approaches the real line from the upper half plane, we have the asymptotics of its (1,1)-entry
    \begin{equation}\label{B1}
        P_{n, n}(x) = [\mathbf{P}(x)]_{11} = \left([\mathbf{N}(x)]_{11,+}e^{ng_{+}(x)} + x^{-\alpha+\frac{1}{2}}[\mathbf{N}(x)]_{12,+}e^{-n(g_{+}(x)-cx-l)}  \right)\left(1+O\left( \frac{1}{n} \right)\right).
    \end{equation}
    Recalling the function $\mathbf{N}(z)$ in \eqref{global} and using the property of $g$-function in \eqref{g+_-2}, we obtain
    \begin{align}
        P_{n, n}(x) = & \frac{D_{\infty}e^{\frac{n}{2}(c x + l)}}{2 \sqrt{x} \left((\sqrt{a} + \sqrt{b})x\right)^{\alpha - \frac{1}{2}}(x-a)^{\frac{1}{4}}(b-x)^{\frac{1}{4}}} \nonumber\\
            & \times \left[ \left(x + \sqrt{a b} + i \sqrt{(x-a)(b-x)}\right)^{\alpha - \frac{1}{2}} (x + i \sqrt{(x-a)(b-x)}) e^{i n \pi \int_x^{b} \rho (s) ds - \frac{\pi i}{4}} \right. \nonumber\\
            & + \left(x + \sqrt{a b} - i \sqrt{(x-a)(b-x)}\right)^{\alpha - \frac{1}{2}} (x - i \sqrt{(x-a)(b-x)}) e^{-i n \pi \int_x^{b} \rho (s) ds + \frac{\pi i}{4}} \nonumber\\
            & \left. + O\left( \frac{1}{n} \right)\right]. \label{B11}
    \end{align}
    To put the above formula into a more concise form, let us rewrite the following quantities in exponential form:
    \begin{equation}\label{exp-1}
        x + \sqrt{a b} \pm i \sqrt{(x-a)(b-x)} = (\sqrt{a} + \sqrt{b})\sqrt{x} \exp{\left(\pm i \arccos{\frac{x + \sqrt{a b}}{(\sqrt{a} + \sqrt{b})\sqrt{x}}}\right)}
    \end{equation}
    and
    \begin{equation}\label{exp-2}
        x \pm i \sqrt{(x-a)(b-x)} = \sqrt{(a+b)x - a b} \exp{\left(\pm i \arccos{\frac{x}{\sqrt{(a+b)x - a b}}}\right)}.
    \end{equation}
    Inserting \eqref{exp-1} and \eqref{exp-2} into \eqref{B11} gives us
    \begin{equation}\label{B2}
        \begin{split}
            P_{n, n}(x) = & \frac{D_{\infty}e^{\frac{n}{2}(cx + l)}\sqrt{(a+b)x-a b}}{x^{\frac{\alpha}{2}}(x-a)^{\frac{1}{4}}(b-x)^{\frac{1}{4}}}\\
            & \times \left[e^{i \left((\alpha - \frac{1}{2})\arccos{\frac{x + \sqrt{a b}}{(\sqrt{a} + \sqrt{b})\sqrt{x}}} + \arccos{\frac{x}{\sqrt{(a+b)x - a b}}} + n \pi \int_x^{b} \rho (s) ds - \frac{\pi i}{4} \right)} \right.\\
            & \left.+ e^{-i \left((\alpha - \frac{1}{2})\arccos{\frac{x + \sqrt{a b}}{(\sqrt{a} + \sqrt{b})\sqrt{x}}} + \arccos{\frac{x}{\sqrt{(a+b)x - a b}}} + n \pi \int_x^{b} \rho (s) ds - \frac{\pi i}{4} \right)} + O\left( \frac{1}{n} \right) \right].
        \end{split}
    \end{equation}
    Then the approximation \eqref{PninB-final} follows from the above formula. A similar calculation leads to \eqref{PninB-final} as $z$ approaches the real line from the lower half plane.

    This completes the proof of Theorem \ref{asymptoticinb}.
\end{proof}
Next, we derive the asymptotic behaviour of $P_{n, n}(x)$ for $x$ in a compact subset of the saturated region.
\begin{proof}[Proof of Theorem \ref{asymptoticins}]
For $z$ close to the interval $(0, a)$ and bounded away from the endpoints $0$ and $a$, the transformations \eqref{iptorhp}, \eqref{ytot}, \eqref{ttos} and \eqref{stor} give us
\begin{equation}\label{S}
    \mathbf{P}(z) = \begin{cases}
        K^{-1} e^{\frac{n l}{2} \sigma_3} \mathbf{R}(z) \mathbf{N}(z) A_{+}(z)^{-1} e^{n(g(z) - \frac{l}{2}) \sigma_3} K D_{+}^l(z) ^{-1}, \quad \textrm{for $\im{z} > 0$,}\\
        K^{-1} e^{\frac{n l}{2} \sigma_3} \mathbf{R}(z) \mathbf{N}(z) A_{-}(z)^{-1} e^{n(g(z) - \frac{l}{2}) \sigma_3} K D_{-}^l(z) ^{-1}, \quad \textrm{for $\im{z} < 0$.}
    \end{cases}
\end{equation}
We substitute the matrices $A_{\pm}(z)$ in \eqref{A-def} and $D_{\pm}^l(z)$ in \eqref{Dl} into the above formula and take limit as $z$ approaches the real line from the upper half plane. Applying the asymptotics of $\mathbf{R}(z)$ in \eqref{Rbehaviour} and using the properties of $g$-function in \eqref{2g-V-l} and \eqref{g+_-3}, we get the approximation of its (1,1)-entry
    \begin{align}
        P_{n, n}(x)  = & [\mathbf{P}(x)]_{11} = -2i \sin{(n \pi \sqrt{x})}\left[e^{i n \pi \sqrt{x} + n g_{+}(x)}[\mathbf{N}(x)]_{11,+}\left(1+O\left( \frac{1}{n} \right)\right)\right. \nonumber\\
        &\left.+ e^{-n(g_{+}(x)-cx-l)} [\mathbf{N}(x)]_{12,+}x^{-\alpha+\frac{1}{2}} \left(1+O\left( \frac{1}{n} \right)\right)\right]\nonumber\\
        = & (-1)^n e^{n \int_0^{b} \log |x-s| \rho (s) ds}\left[-2i \sin{(n \pi \sqrt{x})}[\mathbf{N}(x)]_{11,+}\left(1+O\left( \frac{1}{n} \right)\right) + O(e^{-n \delta})\right].
    \end{align}
%\end{equation}
With the explicit expression of $\mathbf{N}(z)$ in \eqref{global} we obtain formula \eqref{PninS-final}. A similar calculation leads to the same formula \eqref{PninS-final} as $z$ approaches the real line from the lower half plane.

This finishes the proof of Theorem \ref{asymptoticins}.
\end{proof}

Similarly, for orthogonal polynomials $P_{n, n}(z)$ at the endpoints $0$, $a$ and $b$, the asymptotics can be obtained in terms of explicit transformations near these endpoints.
%From these formulas and the properties of the $g$-function, Theorem \ref{asymptotic0}, Theorem \ref{asymptotica} and Theorem \ref{asymptoticb} can be obtained immediately.
We first consider the asymptotic behaviour of $P_{n, n}(z)$ near the origin.
\begin{proof}[Proof of Theorem \ref{asymptotic0}]
    A combination of \eqref{iptorhp}, \eqref{ytot}, \eqref{ttos} and \eqref{stor} gives
    \begin{equation}\label{Pin0-}
        \mathbf{P}(z) =
        K^{-1} e^{\frac{n l}{2} \sigma_3} \mathbf{R}(z) \mathbf{P}^{(0)}(z) e^{n(g(z) - \frac{l}{2}) \sigma_3} K,
    \end{equation}
    for $z \in D(0, \varepsilon)$ with $\re{z} < 0$ and
    \begin{equation}\label{Pin0+}
        \mathbf{P}(z) = \begin{cases}
        K^{-1} e^{\frac{n l}{2} \sigma_3} \mathbf{R}(z) \mathbf{P}^{(0)}(z) A_{+}(z)^{-1} e^{n(g(z) - \frac{l}{2}) \sigma_3} K D_{+}^l(z) ^{-1}, \quad \textrm{for $\im{z} > 0$,}\\
         K^{-1} e^{\frac{n l}{2} \sigma_3} \mathbf{R}(z) \mathbf{P}^{(0)}(z) A_{-}(z)^{-1} e^{n(g(z) - \frac{l}{2}) \sigma_3} K D_{-}^l(z) ^{-1}, \quad \textrm{for $\im{z} < 0$,}
        \end{cases}
    \end{equation}
    for $z \in D(0, \varepsilon)$ with $\re{z} > 0$.

    Substituting the definition of the parametrix $\mathbf{P}^{(0)}(z)$ in \eqref{P0-def} into \eqref{Pin0-} and using the asymptotics of $\mathbf{R}(z)$ in \eqref{Rbehaviour}, we have
    \begin{equation}
        \mathbf{P}(z) =
        K^{-1} e^{\frac{n l}{2} \sigma_3} \mathbf{N}(z) H^{*}(z)^{\sigma_3} e^{n(g(z) - \frac{l}{2}) \sigma_3} K \left(I + O\left( \frac{1}{n} \right) \right).
    \end{equation}
    Expanding the above formula and taking its (1,1)-entry, the asymptotic approximation \eqref{Pnn-void-main} of $P_{n, n}(z)$ for $\re{z}<0$ is obtained.
    By using \eqref{H*-def}, one can get the same formula \eqref{Pnn-void-main} for $\re{z}>0$.

    This proves Theorem \ref{asymptotic0}.
\end{proof}
Then, we consider the asymptotics of $P_{n, n}(z)$ near the band-void edge point $b$.
\begin{proof}[Proof of Theorem \ref{asymptoticb}]
    For $z \in D(b, \varepsilon)$, we follow the explicit transformations \eqref{iptorhp}, \eqref{ytot}, \eqref{ttos} and \eqref{stor} to obtain
    \begin{equation}\label{Pinb+}
        \mathbf{P}(z) =\begin{cases}
            K^{-1} e^{\frac{n l}{2} \sigma_3} \mathbf{R}(z) \mathbf{P}^{(b)}(z) e^{n(g(z) - \frac{l}{2}) \sigma_3} K D_{+}^u(z) ^{-1}, \quad & \textrm{for $\im{z}>0$}\\
            K^{-1} e^{\frac{n l}{2} \sigma_3} \mathbf{R}(z) \mathbf{P}^{(b)}(z) e^{n(g(z) - \frac{l}{2}) \sigma_3} K D_{-}^u(z) ^{-1}, \quad & \textrm{for $\im{z}<0$}
        \end{cases}
    \end{equation}
    with $\re{z} > b$ and
    \begin{equation}\label{Pinb-}
       \mathbf{P}(z) = \begin{cases}
           K^{-1} e^{\frac{n l}{2} \sigma_3} \mathbf{R}(z) \mathbf{P}^{(b)}(z) B(z) e^{n(g(z) - \frac{l}{2}) \sigma_3} K D_{+}^u(z) ^{-1}, \quad & \textrm{for $\im{z}>0$}\\
            K^{-1} e^{\frac{n l}{2} \sigma_3} \mathbf{R}(z) \mathbf{P}^{(b)}(z) B(z)^{- 1} e^{n(g(z) - \frac{l}{2}) \sigma_3} K D_{-}^u(z) ^{-1}, \quad & \textrm{for $\im{z}<0$}
       \end{cases}
   \end{equation}
   with $\re{z} < b$.
    Applying the asymptotics of $\mathbf{R}(z)$ in \eqref{Rbehaviour} and recalling the definition of the parametrix $\mathbf{P}^{(b)}(z)$ in \eqref{Pb-def}, we obtain
    \begin{equation}
        \begin{split}
            \mathbf{P}(z) = &  \sqrt{\pi} e^{-\frac{\pi i}{12}} K^{-1} e^{\frac{n l}{2} \sigma_3} \mathbf{N}(z) z^{(\frac{\alpha}{2}- \frac{1}{4}) \sigma_3} e^{\frac{\pi i}{4} \sigma_3} \begin{pmatrix}
            1 & -1\\
            1 & 1
        \end{pmatrix} \\
        & \times n^{\frac{1}{6} \sigma_3} f(z)^{\frac{1}{4} \sigma_3}
          \begin{pmatrix}
            \Ai{(n^{\frac{2}{3}}f(z))} & \Ai{(\omega^2 n^{\frac{2}{3}}f(z))}\\
            \Ai'{(n^{\frac{2}{3}}f(z))} & \omega^2 \Ai'{(\omega^2 n^{\frac{2}{3}}f(z))}
        \end{pmatrix}\\
        & \times e^{-\frac{\pi i}{6} \sigma_3} e^{n \frac{cz}{2} \sigma_3} z^{(-\frac{\alpha}{2}+ \frac{1}{4}) \sigma_3} e^{n(g(z) - \frac{l}{2}) \sigma_3} K D_{+}^u(z)^{-1} \left(I + O\left( \frac{1}{n} \right) \right)
        \end{split}
    \end{equation}
    for $\re{z}>b$ and $\im{z}>0$. After a straightforward computation, we get the approximation \eqref{Pnn-b-main} from the (1,1)-entry of the above formula,. In a similarly way, one can show that \eqref{Pnn-b-main} also holds for $z \in \{\re{z} > b, \im{z} < 0\} \cup \{\re{z}<b\}$.

    This completes the proof of Theorem \ref{asymptoticb}.
\end{proof}
%The proof of Theorem \ref{asymptotica} is similar to the proof of Theorem \ref{asymptoticb}, it can be carried out along the above procedures.
Finally, we derive the asymptotic behaviour of the polynomials $P_{n, n}(z)$ near the saturated region-band edge point $a$.
\begin{proof}[Proof of Theorem \ref{asymptotica}]
     For $z \in D(a, \varepsilon)$, we follow the explicit transformations \eqref{iptorhp}, \eqref{ytot}, \eqref{ttos} and \eqref{stor} to obtain
      \begin{equation}\label{Pina+}
       \mathbf{P}(z) = \begin{cases}
           K^{-1} e^{\frac{n l}{2} \sigma_3} \mathbf{R}(z) \mathbf{P}^{(a)}(z) B(z) e^{n(g(z) - \frac{l}{2}) \sigma_3} K D_{+}^u(z) ^{-1}, \quad & \textrm{for $\im{z}>0$}\\
            K^{-1} e^{\frac{n l}{2} \sigma_3} \mathbf{R}(z) \mathbf{P}^{(a)}(z) B(z)^{- 1} e^{n(g(z) - \frac{l}{2}) \sigma_3} K D_{-}^u(z) ^{-1}, \quad & \textrm{for $\im{z}<0$}
       \end{cases}
   \end{equation}
   with $\re{z} > a$ and
   \begin{equation}\label{Pina-}
        \mathbf{P}(z) = \begin{cases}
        K^{-1} e^{\frac{n l}{2} \sigma_3} \mathbf{R}(z) \mathbf{P}^{(a)}(z) A_{+}(z)^{-1} e^{n(g(z) - \frac{l}{2}) \sigma_3} K D_{+}^l(z) ^{-1}, \quad \textrm{for $\im{z} > 0$}\\
         K^{-1} e^{\frac{n l}{2} \sigma_3} \mathbf{R}(z) \mathbf{P}^{(a)}(z) A_{-}(z)^{-1} e^{n(g(z) - \frac{l}{2}) \sigma_3} K D_{-}^l(z) ^{-1}, \quad \textrm{for $\im{z} < 0$}
        \end{cases}
    \end{equation}
    with $\re{z} < a$. Next, we apply the asymptotics of $\mathbf{R}(z)$ in \eqref{Rbehaviour} and the definition of the parametrix $\mathbf{P}^{(a)}(z)$ in \eqref{Pa-def} to get
    \begin{equation}
        \begin{split}
            \mathbf{P}(z) = & (-1)^n  \sqrt{\pi} e^{-\frac{\pi i}{12}} K^{-1} e^{\frac{n l}{2} \sigma_3} \mathbf{N}(z) z^{(\frac{\alpha}{2}- \frac{1}{4}) \sigma_3} e^{\frac{\pi i}{4} \sigma_3} \begin{pmatrix}
            1 & -1\\
            1 & 1
        \end{pmatrix} n^{\frac{1}{6} \sigma_3} (-\widetilde{f}(z))^{\frac{1}{4} \sigma_3} \\
        & \times \sigma_3 \begin{pmatrix}
            \Ai{(-n^{\frac{2}{3}}\widetilde{f}(z))} & -\omega^2 \Ai{(-\omega n^{\frac{2}{3}}\widetilde{f}(z))}\\
            \Ai'{(-n^{\frac{2}{3}}\widetilde{f}(z))} &  -\Ai'{(-\omega n^{\frac{2}{3}}\widetilde{f}(z))}
        \end{pmatrix} e^{-\frac{\pi i}{6} \sigma_3} \begin{pmatrix}
            1 & 0\\
            1 & 1
        \end{pmatrix} \sigma_3 \sigma_1\\
        & \times  e^{-n(g(z) - \frac{cz}{2}- \frac{l}{2} + i \pi \sqrt{z}) \sigma_3} z^{(-\frac{\alpha}{2}+ \frac{1}{4}) \sigma_3} B(z)e^{n(g(z) - \frac{l}{2}) \sigma_3} K D_{+}^u(z)^{-1} \left(I + O\left( \frac{1}{n} \right) \right)
        \end{split}
    \end{equation}
    for $\re{z}>a$ and $\im{z}>0$. Then, the asymptotics of $P_{n, n}(z)$ in \eqref{Pnn-a-main} is obtained by substituting the formula \eqref{J-def} for $B(z)$ into the above formula. In a similarly way, we get the asymptotic expansion \eqref{Pnn-a-main} for $z \in \{\re{z}>a, \im{z}<0\}\cup \{\re{z}<a\}$.

    This finishes the proof of Theorem \ref{asymptotica}.
\end{proof}

\section{Discussion}
\label{sec:dl:discussion}

As we have mentioned in the Introduction, the upper constraint in the equilibrium problem \eqref{em-def} is not active  when $c < \frac{\pi^2}{4}$. Then, the equilibrium measures $\mu_0$ are the same for both the discrete and continuous Laguerre polynomials. It is natural to expect that these polynomials have the same asymptotic expansions as well. Note that,  0 is the hard edge of $\mu_0$ when $c < \frac{\pi^2}{4}$; see the density function in \eqref{em:rho0}. In the steepest descent analysis of the RH analysis for continuous Laguerre polynomials, the local parametrix is constructed in terms of the Bessel functions; see \cite{vanlessen2007}. However, when performing the RH analysis for the discrete Laguerre polynomials, we failed to arrive at the model Bessel parametrix near 0. Moreover, some numerical computations also suggest that there exist certain differences for the zeros of the discrete and continuous Laguerre polynomials near the origin. This is an unexpected and interesting observation.

%In this section, we compare the discrete Laguerre polynomials with continuous Laguerre polynomials for the case that the upper constraint is not active. We show that they are different with each other near the origin. This difference can be indicated from the local parametrix near the origin and some numerical computations of the zeros.

\subsection{Local parametrices near $0$}

Let us first elaborate the difference from the viewpoint of the RH analysis.
For the discrete case, the interpolation problem remains the same as that in Section \ref{sec:dl:ip}. However, when we transform it to a continuous RH problem, the regions $\Omega _{\pm}^{\bigtriangleup}$ related to the saturated region disappear; see Figure \ref{sigmaY}. Then, the transformation in \eqref{iptorhp} is simplified to be
\begin{equation} \label{iptorhp2}
    \mathbf{Y}(z) = \begin{cases}
            K\mathbf{Y}^u(z)K^{-1}, & \textrm{$z \in \Omega _{\pm}^{\bigtriangledown}$},\\
            K\mathbf{P}(z)K^{-1}, & \textrm{otherwise},
            \end{cases}
\end{equation}
where $\mathbf{Y}^u$ and $K$ are given in \eqref{yu-def} and \eqref{K}. As a consequence, following the similar analysis as in Section \ref{sec:dl:rhp},  the local parametrix near the origin is modified as follows. (Let us denote  the local parametrices for the discrete and continuous Laguerre polynomials near the origin by $\mathbf{P}^{(d,0)}(z)$ and $\mathbf{P}^{(c,0)}(z)$, respectively.)

\subsubsection*{\emph{Local parametrix near $0$ for the discrete case: $\mathbf{P}^{(d,0)}(z)$}}
\begin{figure}[h]
    \centering
    \includegraphics[width=0.3\textwidth]{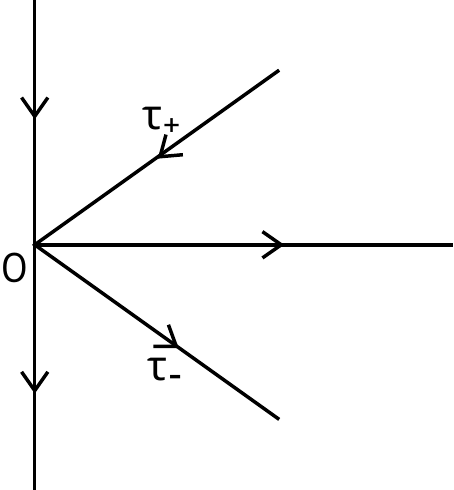}
    \caption{Contour $\Sigma^{(d)}_0$.}
    \label{sigma0}
\end{figure}
\begin{itemize}
    \item {\emph{Analyticity}: $\mathbf{P}^{(d,0)}(z)$ is analytic for $z \in D(0, \varepsilon) \setminus \Sigma^{(d)}_0$.}
    \item{\emph{Jump condition}: $\mathbf{P}^{(d,0)}_{+}(z)=\mathbf{P}^{(d,0)}_{-}(z)J_{\mathbf{P}^{(d,0)}}(z)$ for $z \in D(0, \varepsilon) \cap \Sigma^{(d)}_0$,
    \begin{equation} \label{dL-jump-sec6}
        J_{\mathbf{P}^{(d, 0)}}(z)=
        \begin{cases}
            \begin{pmatrix}
                0 & x^{\alpha-\frac{1}{2}}\\
                -x^{-\alpha+\frac{1}{2}} & 0
            \end{pmatrix}, & z \in (0, \varepsilon) ,\\
            \begin{pmatrix}
                1 & \pm z^{\alpha-\frac{1}{2}} \frac{e^{n(2g(z)-V(z)-l)}}{1- e ^{\mp 2 i n \pi \sqrt{z}}}\\
                0 & 1
            \end{pmatrix}, & z \in (0, \pm i \varepsilon),\\
            \begin{pmatrix}
                1 & 0\\
                \mp z^{-\alpha+\frac{1}{2}}e^{-n(2g(z)-V(z)-l)} & 1
            \end{pmatrix}, & z \in \tau_{\pm}.
        \end{cases}
    \end{equation}
    }
      \item{\emph{Matching condition}: for $z \in \partial D(0, \varepsilon)$,
    \begin{equation}
        \mathbf{P}^{(d,0)}(z) = \left(I+ O\left( \frac{1}{n} \right)\right) \mathbf{N}(z), \qquad \textrm{as } n \to \infty.
    \end{equation}}
\end{itemize}

While for the continuous Laguerre polynomials, since the first ``discrete-to-continuous" transformation \eqref{iptorhp2} is not needed, the local parametrix near the origin is given by (see also Vanlessen \cite{vanlessen2007}):

\subsubsection*{\emph{Local parametrix near $0$ for the continuous case: $\mathbf{P}^{(c,0)}(z)$}}
\begin{figure}[h]
    \centering
    \includegraphics[width=0.3\textwidth]{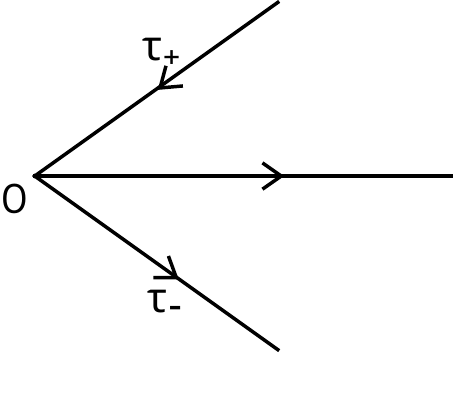}
    \caption{Contour $\Sigma^{(c)}_0$.}
    \label{sigma0c}
\end{figure}
\begin{itemize}
    \item {\emph{Analyticity}: $\mathbf{P}^{(c,0)}(z)$ is analytic for $z \in D(0, \varepsilon) \setminus \Sigma^{(c)}_0$.}
    \item{\emph{Jump condition}: $\mathbf{P}^{(c,0)}_{+}(z)=\mathbf{P}^{(c,0)}_{-}(z)J_{\mathbf{P}^{(c,0)}}(z)$ for $z \in D(0, \varepsilon) \cap \Sigma^{(c)}_0$,
    \begin{equation}
        J_{\mathbf{P}^{(c, 0)}}(z)=
        \begin{cases}
            \begin{pmatrix}
                0 & x^{\alpha-\frac{1}{2}}\\
                -x^{-\alpha+\frac{1}{2}} & 0
            \end{pmatrix}, & z \in (0, \varepsilon), \\
            \begin{pmatrix}
                1 & 0\\
                \mp z^{-\alpha+\frac{1}{2}}e^{-n(2g(z)-V(z)-l)} & 1
            \end{pmatrix}, & z \in \tau_{\pm}.
        \end{cases}
    \end{equation}
    }
     \item{\emph{Matching condition}: for $z \in \partial D(0, \varepsilon)$,
    \begin{equation}
        \mathbf{P}^{(c,0)}(z) = \left(I+ O\left( \frac{1}{n} \right)\right) \mathbf{N}(z), \qquad \textrm{as } n \to \infty.
    \end{equation}}
\end{itemize}
Comparing with the above RH problem, there is an extra jump on the imaginary axis in \eqref{dL-jump-sec6} for the RH problem for $\mathbf{P}^{(d,0)}(z)$, such that the model Bessel parametrix cannot be applied to approximate $\mathbf{P}^{(d,0)}(z)$. Note that off-diagonal entry $\pm z^{\alpha-\frac{1}{2}} \frac{e^{n(2g(z)-V(z)-l)}}{1- e ^{\mp 2 i n \pi \sqrt{z}}}$ on the imaginary axis in \eqref{dL-jump-sec6} does not tend to 0 uniformly in the neighbourhood of the origin. Therefore it has a contribution on the local parametrix. For the case $c > \frac{\pi^2}{4}$ we have studied, there is also a jump on the imaginary axis in \eqref{Y-jump}, such that the local parametrix is constructed in terms of the Gamma functions in \eqref{P0-def}. However, we have not found a suitable parametrix near $0$ when $c<\frac{\pi^2}{4}$.

\subsection{Numerical computation for the zeros}

One can also see some numerical evidences between these two cases. Let us denote the discrete and continuous Laguerre polynomials by $\dL_{n}^{(0)}(x)$ and $\cL_{n}^{(-/2)}(x)$, which are orthogonal with respect to the weight functions $e^{-\frac{\pi^2}{60}x}$ and $x^{-\frac{1}{2}}e^{-\frac{\pi^2}{60}x}$, respectively. The reason why there is $ x^{-\frac{1}{2}}$ difference in the weight function is due to the ``discrete-to-continuous" transformation \eqref{iptorhp2}; see also Remark \ref{changeweight}.

%On the other side, some numerical computations also indicate this difference. For convenience, we calculate zeros of discrete Laguerre polynomials $\dL_{10}^{(0)}(x)$ with respect to the weight $e^{-\frac{\pi^2}{60}x}$ and continuous Laguerre polynomials $\cL_{10}^{(-1/2)}(x)$ with respect to the weight function $x^{-\frac{1}{2}}e^{-\frac{\pi^2}{60}x}$, where the difference $x^{-\frac{1}{2}}$ comes from  and take the order of polynomials $n = 10$.

\begin{table}[h]
    \centering
    \begin{tabular}{|c|c|}
        \hline
        $\dL_{10}^{(0)}(x)$    & $\cL_{10}^{(-1/2)}(x)$\\
        \hline
        1.0312593902618079872  & 0.3659238650514353556 \\
        4.3927946544706143130  & 3.3063179324671812008 \\
        10.508882642411045983  & 9.2583899628419669141 \\
        19.696609776199878331  & 18.374677972612339445 \\
        32.270084609499775568  & 30.912532317124747650 \\
        48.658595847104970581  & 47.281160918670718658 \\
        69.526822240006454052  & 68.137261123322616690 \\
        95.99803545442174504   & 94.600529260150193532 \\
        130.24647289010848939  & 128.84341399111556911 \\
        177.85779728345868061  & 176.45053941796852867 \\
        \hline
    \end{tabular}
    \caption{Zeros of discrete and continuous Laguerre polynomials}
    \label{table:laguerre}
\end{table}
The above Table gives 10 zeros for both polynomials. When $z$ is large, the zeros are close to each other; while when $z$ is close to the origin, there are some obvious differences between these two cases.

\section*{Acknowledgements}

This work was partially supported by a grant from the City University of Hong Kong (Project No. 7005252), and a grant from the Research Grants Council of the Hong Kong Special Administrative Region, China (Project No. CityU 11300520). We would like to thank Shuai-Xia Xu for helpful discussions.

\end{document}